\documentclass{amsart}
\usepackage[utf8]{inputenc}
\usepackage{microtype}
\usepackage{amsmath}
\usepackage{amssymb}
\usepackage{amsthm}
\usepackage{amsfonts}
\usepackage{booktabs}
\usepackage[final]{graphicx}
\usepackage{setspace}
\usepackage[center]{caption}
\usepackage{tikz}
\usetikzlibrary{shapes}
\usepackage{tikz-cd}
\usepackage[active]{srcltx}
\usepackage{mathtools}
\usepackage{mathrsfs}

\usepackage[colorlinks=true, allcolors=blue]{hyperref}

\usepackage[top=3cm,bottom=2cm,left=3cm,right=3cm,marginparwidth=1.75cm]{geometry}
\parskip4pt plus2pt minus2pt

\usepackage{bm}
\usepackage{dsfont}
\usepackage{enumitem}
\usepackage{relsize}
\theoremstyle{plain}\newtheorem{theorem}{Theorem}[]
\theoremstyle{plain}\newtheorem*{theorem*}{Theorem}
\theoremstyle{plain}\newtheorem{proposition}[theorem]{Proposition}
\theoremstyle{plain}\newtheorem{lemma}[theorem]{Lemma}
\theoremstyle{plain}\newtheorem{corollary}[theorem]{Corollary}
\theoremstyle{plain}\newtheorem*{corollary*}{Corollary}
\theoremstyle{definition}\newtheorem{definition}[theorem]{Definition}
\theoremstyle{definition}\newtheorem{remark}[theorem]{Remark}
\theoremstyle{definition}\newtheorem{example}[theorem]{Example}

\theoremstyle{thm}
\newtheorem{thm}{Theorem}
\newtheorem{cor}[thm]{Corollary}

\DeclarePairedDelimiter{\abs}{\lvert}{\rvert}
\DeclarePairedDelimiter{\norm}{\|}{\|}
\newcommand{\R}{\mathds{R}}
\newcommand{\C}{\mathds{C}}
\newcommand{\N}{\mathds{N}}
\newcommand{\Z}{\mathds{Z}}

\newcommand{\PV}{\mathrm{P}V}
\newcommand{\SV}{\Sigma}
\newcommand{\SW}{V}
\newcommand{\W}{\mathcal{V}}
\newcommand{\tr}{\mathrm{tr}}

\title{What is the probability that a random symmetric tensor is close to rank-one?}
\author{Alberto Cazzaniga, Antonio Lerario, Andrea Rosana}

\begin{document}

\maketitle

\begin{abstract}We address the general problem of estimating the probability that a real symmetric tensor is close to rank--one tensors. Using Weyl's tube formula, we turn this question into a differential geometric one involving the study of metric invariants of the real Veronese variety. 
More precisely, we give an explicit formula for its  reach  and curvature coefficients with respect to the Bombieri--Weyl metric. These results are obtained using techniques from Random Matrix theory and an explicit description of the second fundamental form of the Veronese variety in terms of GOE matrices. Our findings give a complete solution to the original problem. In the case of rational normal curves it leads to a simple formula describing explicitly exponential decay with respect to the degree of the tensor.

\end{abstract}

\section{Introduction}
\subsection{What is the probability that a random symmetric tensor is close to rank-one?}
Over the last decades, symmetric tensors have been proven to be a very flexible and valuable tool in many different contexts. In particular, rank--one approximation and tensor decomposition found applications in machine learning (\cite{tensorslatentvariable}), signal processing and image analysis (\cite{tensorsignal}, \cite[Ch.3, 4]{Sakatasurvey}), chemistry (\cite{tensorschemical}), statistics (\cite{McCullagh}), psychology and medical diagnostics (\cite{Kroonenberg, diffusionbrain}), phylogenetics (\cite[Ch.5]{Sakatasurvey}, \cite{Landsbergtensors}), and quantum computing (\cite{mostquantum}), to name a few. Motivated by this, in this paper we address the following question: 
\[\textit{\lq\lq What is the probability for a real symmetric tensor to be \lq\lq close\rq\rq \ to rank--one?\rq\rq}\] 
To make this question more precise, we need to introduce a natural notion of distance and a natural probability measure on the space $\mathcal{S}^d(n)$ of real symmetric tensors on $\R^{n}$ of order $d$. Recall first that this is the subspace of  $(\R^n)^{\otimes d}$ consisting of elements which are invariant under the action of the symmetric group under permutation of the factors. The set $\mathcal{S}_1^d(n)$ of symmetric, \emph{rank--one} tensors consists of the set of elements $T\in \mathcal{S}^d(n)$ of the form $T=\pm v\otimes \cdots \otimes v$ for some $v\in \R^n$.

The space of all tensors is endowed with the \emph{Frobenius scalar product}, denoted by $\langle \cdot,\cdot\rangle_{F}$ and defined as follows. Letting $\{e_1, \ldots, e_n\}$ be the standard basis of $\R^n$, then a basis for the space of tensors is given by $\{e_{i_1}\otimes\cdots\otimes e_{i_d}\,|\,1\leq  i_1, \ldots, i_d\leq n\}$  and the Frobenius scalar product is obtained by declaring this basis to be orthonormal. We still denote by $\langle\cdot, \cdot\rangle_F$ the restriction of the Frobenius scalar product to the space of symmetric tensors and we use the notation $\|\cdot\|_F$ for the associated norm and $\mathrm{dist}_F(\cdot, \cdot)$ for the induced distance function.

The choice of a scalar product on a finite-dimensional vector space naturally leads to the definition of a Gaussian measure on it (this comes for free without introducing further structure). In the case of our interest, the space of symmetric tensors with the Frobenius scalar product can be turned into a \emph{Gaussian probability space} by defining for every Borel set $U\subseteq {\mathcal{S}^d(n)}$ 
\begin{equation}\label{eq:gaussiantensor}\mathbb{P}\bigg\{T\in U\bigg\}:=\frac{\displaystyle \int_U \mathrm{exp}\left({-\frac{1}{2}\|T\|_F^2}\right)\,\mathrm{d\mu}}{\displaystyle \int_{{\mathcal{S}^d(n)}} \exp\left({-\frac{1}{2}\|T\|_F^2}\right)\,\mathrm{d\mu}},
\end{equation}
where $\mathrm{d}\mu$ denotes the integration with respect to the Lebesgue measure on ${\mathcal{S}^d(n)}$. Notice that the Frobenius scalar product has a natural invariance under the action of the orthogonal group on the space of tensors and the resulting Gaussian probability distribution inherits this invariance (see Section \ref{GOEKostlansection}).

With this notation, the above question can be phrased more precisely as follows: we are required to compute, for a given $\delta>0$, the quantity: 
\begin{equation}\label{eq:proba1}\mathbb{P}\left\{T\in {\mathcal{S}^d(n)}\,\bigg|\, \mathrm{dist}_F(T, \mathcal{S}_1^d(n))\leq \delta \|T\|_{\mathrm{F}}\right\}.\end{equation}
Notice that we have turned the question into a conic problem that takes into account also the norm of the tensor, as it is common procedure in numerical algebraic geometry \cite{Burgissercondition}.

Before proceeding, we discuss two examples of applications of the results of the current paper, more precisely of Theorem \ref{thm:volumeintro}, which gives a closed formula for the probability in \eqref{eq:proba1}.

\begin{example}[Real symmetric matrices]\label{example1}When $d=2$ the space $\mathcal{S}^2(n)$ can be naturally identified with the space of $n\times n$ real symmetric matrices and the Frobenius scalar product is given by 
 $$\langle A, B\rangle_{\mathrm{F}}=\tr(AB).$$ 
In this case, by the Spectral Theorem, the set $\mathcal{S}^2_1(n)$ of rank--one tensors coincides with the set of matrices of the form $A=\pm vv^t$, for some $v\in \R^n$.
The space of symmetric matrices together with the Gaussian distribution \eqref{eq:gaussiantensor} is called by probabilists the \emph{Gaussian Orthogonal Ensemble}, and denoted by $\mathrm{GOE}(n)$ (see Section \ref{GOEKostlansection}). Ordering the singular values of $Q$ in increasing order, $\sigma_1(Q)\leq\cdots\leq \sigma_n(Q)$, by the Eckart--Young Theorem, the distance in the Frobenius norm between $Q$ and $\mathcal{S}^2_1(n)$ is given by
$$\mathrm{dist}_{F}(Q, \mathcal{S}^2_1(n))=\left(\sum_{k=1}^{n-1}\sigma_k(Q)^2\right)^{\frac{1}{2}}.$$ 
In this case, using the fact that $\|Q\|_F^2=\sigma_1(Q)^2+\cdots+\sigma_n(Q)^2$, the probability in \eqref{eq:proba1} equals
\begin{equation}\label{eq:proba2}\mathbb{P}\bigg\{(1-\delta^2)\|Q\|_F^2\leq \sigma_n(Q)^2\bigg\},\end{equation}
therefore our question becomes of interest from the point of view of Random Matrix Theory. In this case, we can apply Theorem \ref{thm:volumeintro} and obtain a closed formula for the probability in \eqref{eq:proba2}, for $\delta\leq 1/\sqrt{2}$. For instance:
\begin{align}\label{eq:proba3}\mathbb{P}\bigg\{Q\in \mathcal{S}^2(3)\,\bigg|\,(1-\delta^2)\|Q\|_F^2\leq \sigma_3(Q)^2\bigg\}&=\frac{1}{3 \pi}\left(2 \delta \sqrt{1 - \delta^2} (-3 + 14 \delta^2) + 
 6 \arctan\left(\frac{\delta}{ \sqrt{1 - 
 \delta^2}}\right)\right)\\
 &=\frac{32 \delta^3}{3 \pi} - \frac{64 \delta^5}{15\pi} + \mathcal{O}(\delta^6).
 \end{align}
 In the context of Random Matrix Theory, which is concerned with the study of the distribution of the eigenvalues of random symmetric matrices, expressions like \eqref{eq:proba3} are especially interesting. In fact, most of the theory is focused on asymptotic results with the size of the matrices that goes to infinity (see, for instance, \cite[Chapter 3]{Zeitounirandom}) and very little is known on the distribution of the eigenvalues of random matrices of  fixed (small) size. 
\end{example}

\begin{example}[Geometric measure of entanglement]
Quantum entanglement is one of the main topics of research in quantum information theory, where it provides an important resource for quantum computing. Recall that a $d$-partite pure (symmetric) state of a quantum system can be regarded as a normalized (symmetric) tensor in the tensor product of $d$ Hilbert spaces $\mathcal{H}=\otimes_{k=1}^d \mathcal{H}_k$. A state $|\phi\rangle \in \mathcal{H}$ is called \emph{separable} if it is a product state $|\phi\rangle = \otimes_{k=1}^d|\phi^{(k)}\rangle$ with $\||\phi^{(k)}\rangle\|_{\mathcal{H}_k}=1$ for $k=1,\dots,d$; it is called \emph{entangled} if it is not separable. In the language of tensors, separable pure states correspond to rank-one tensors, while entangled states to tensors of higher rank. \\ A way to quantify the entaglement of a pure state, i.e. to measure how distant that state is from being separable, is the \emph{geometric measure of entanglement}, first introduced in \cite{Shimony} (see also \cite{entanglementsymmetric} for symmetric states). For a pure symmetric state $|\psi\rangle$ this is defined as \cite{HuQiZhang}
\begin{align}\label{generalentanglement}
    E_G(|\psi\rangle):= 1 - \underset{\Phi=|\phi\rangle^{\otimes d} \in \mathcal{H}, \||\phi\rangle\|=1}{\mathrm{max}} | \langle \psi | \Phi \rangle |.
\end{align}
When the space is $\mathcal{H}=(\R^n)^{\otimes d}$ endowed with the Euclidean norm, pure symmetric states are represented by a tensor $T \in S^d(n)$ with Frobenius norm $1$ and \eqref{generalentanglement} becomes
\begin{align}\label{symmetricentanglement}
    E_G(T)=1-\underset{x \in S^{n-1}}{\mathrm{max}} | \langle T,x^{\otimes d}\rangle_F |.
\end{align}
In the tensor community, given $T \in S^d(n)$ and $x \in S^{n-1}$ the quantity 
\begin{align*}
    Tx^d:= \langle T,x^{\otimes d}\rangle_F
\end{align*}
is known as the \textit{Generalized Rayleigh Quotient (GQR)} \cite{ZhangGRQ}. The problem of maximizing the absolute value of the GRQ over $S^{n-1}$ for a tensor $T$ is equivalent to finding the best rank-one approximation of $T$. Indeed, given $x_* \in S^{n-1}$ such that 
\begin{align}
    | Tx_*^d |= \underset{x \in S^{n-1}}{\mathrm{max}}| Tx^d |,
\end{align}
we have that $(Tx_*^d) x_*^{\otimes d}$ is the best rank-one approximation of $T$ and 
\begin{align}\label{distanceGRQ}
    \mathrm{dist}_F^2(T,S^d_1(n))= \|T\|_F^2 - (Tx_*^d)^2,
\end{align}
see \cite[Theorem 2.19]{Qitensoranalysis}. When $T$ represents a pure symmetric state as above, by definition $\|T\|_F=1$ and \eqref{symmetricentanglement} becomes
\begin{align}
    E_G(T)= 1 - \sqrt{1-\mathrm{dist}_F^2(T,S_1^d(n))}.
\end{align}
A bound on the distance of $T$ from rank-one symmetric tensors implies a bound on the geometric measure of entanglement of the pure symmetric state represented by $T$. In particular, we see that 
\begin{align*}
\mathbb{P}\left\{T\in {\mathcal{S}^d(n)},\, \| T \|_F=1 \, \bigg|\, \mathrm{dist}_F(T, \mathcal{S}_1^d(n))\leq \delta \right\}=\mathbb{P}\left\{E_G(T) \leq 1-\sqrt{1-\delta^2}\right\},
\end{align*}
where the left-hand side is equivalent to \eqref{eq:proba1}. \\ An alternative formulation of the geometric measure of entanglement sometimes used in quantum computing \cite{mostquantum} is given as 
\begin{align*}
    \tilde E_G(|\psi\rangle) := -\mathrm{log}_2 \left(\underset{\Phi=|\phi\rangle^{\otimes d} \in \mathcal{H}, \||\phi\rangle\|=1}{\mathrm{max}}|\langle \psi | \Phi \rangle |^2\right).
\end{align*}
With the same reasoning as before we obtain
\begin{align}\label{alternativeprobability}
\mathbb{P}\left\{T\in {\mathcal{S}^d(n)},\, \| T \|_F=1 \, \bigg|\, \mathrm{dist}_F(T, \mathcal{S}_1^d(n))\leq \delta \right\}=\mathbb{P}\left\{\tilde E_G(T) \leq -\mathrm{log}_2(1-\delta^2)\right\}.
\end{align}
We refer the reader to  \cite{mostquantum} for a detailed discussion on how the knowledge of bounds such as \eqref{alternativeprobability} might be relevant for quantum computing techniques and algorithms.
\end{example}

\subsection{A geometric formulation of the problem using random polynomials}It will be useful for us to identify symmetric tensors with homogeneous polynomials (and, for notational reasons, to work with tensors on $\R^{n+1}$ rather than on $\R^n$). More precisely, to every $T\in \mathcal{S}^d(n+1)$ we associate the homogeneous polynomial $p_T:\R^{n+1}\to \R$ defined by
\begin{equation}
p_T(x):=\langle T, x^{\otimes d}\rangle_F.
\end{equation}
This gives a linear isomorphism
\begin{equation}\label{eq:phi}\phi:\mathcal{S}^d(n+1)\stackrel{\simeq}{\longrightarrow} \R[x_0, \ldots, x_n]_{(d)},\end{equation}
which, when $d=2$, is the familiar identification between a symmetric matrix $Q$ and the associated quadratic form $p_Q(x)=\langle x,Qx\rangle.$ The advantage of this approach is that it connects to well--studied objects in the theory of random polynomials and it allows to give a neat description of the set of rank--one tensors, under the linear isomorphism $\phi$.

First, we note that, under this identification,  the Frobenius scalar product is given by the restriction of the real part of the \emph{Bombieri--Weyl} Hermitian product, defined on complex polynomials by
\begin{equation}\label{eq:BWdef}\langle p_1, p_2\rangle_{\mathrm{BW}}:=\frac{1}{\pi^{n+1}}\int_{\C^{n+1}}p_1(z)\overline{p_2(z)}e^{-\|z\|^2}\mathrm{d} z,\end{equation}
where $\mathrm{d}z:=(i/2)^{n}\mathrm{d}z_0\mathrm{d}\overline{z_0}\ldots \mathrm{d}z_n\mathrm{d}\overline{z_n}$ is the Lebesgue measure (we show that the isomorphism \eqref{eq:phi} is an isometry in Proposition \ref{propo:phi}). This defines the unique, up to multiples, Hermitian product on the space of complex polynomials which is invariant under the action of the unitary group by change of variables. The restriction of the real part of this Hermitian product to the space of real polynomials is still called the \emph{Bombieri--Weyl} scalar product; the above unitary invariance implies its invariance under the action of the orthogonal group by change of variables. 

The space of homogeneous polynomials with the Gaussian measure obtained by pushworfard under the linear isomorphism $\phi$  of the Gaussian measure \eqref{eq:gaussiantensor} is sometimes called the \emph{Kostlan Ensemble} or the \emph{Shub--Smale Ensemble}, see Remark \ref{remarkBW}.

Finally, under the map $\phi$ the set of rank--one tensors can be identified with the \emph{Veronese variety} $\W_{n,d}\subset \R[x_0, \ldots, x_n]_{(d)}$ of signed $d$--th powers of linear forms. In fact, given a rank--one tensor $T=\pm v^{\otimes d}$ the corresponding homogeneous polynomial is 
$$p_T(x)=\langle \pm v^{\otimes d}, x^{\otimes d}\rangle_{F}=\pm \langle v,x\rangle^d,$$
which is the (signed) $d$--th power of the linear form $x\mapsto \langle v,x\rangle$.

At this point we are in the position of giving a more geometric formulation of our question above, which therefore requires computing, for $\delta>0$ small enough, the quantity:
$$\mathbb{P}\bigg\{p\in \R[x_0, \ldots, x_n]_{(d)}\,\bigg|\, \mathrm{dist}_{\mathrm{BW}}(p, \W_{n,d})\leq \delta \|p\|_{\mathrm{BW}}\bigg\}.$$ In this way, we can regard the above probability as the normalized volume of a tubular neighbourhood of the intersection of the Veronese variety with the unit sphere in the Bombieri--Weyl norm. Thus, our question becomes:
 \[\textit{\lq\lq What is the volume of a neighbourhood of the spherical Veronese variety?\rq\rq}\]
In this paper, exploiting Weyl's Tube Formula, we derive an exact expression for the above volume, for small enough neighbourhoods. Moreover, as a byproduct of our computations, we give a lower bound on the size of the neighbourhood of the set of rank--one tensors that admit a unique best rank--one approximation.

 
 \begin{remark}\label{remarkBW}The properties of the Bombieri--Weyl distribution on the space of real (and complex) polynomials have been intensively studied, starting from the influential works of A. Edelman, E. Kostlan, M. Shub and S. Smale \cite{EdelmanKostlan95, CB1, CB2, CB3}. The point of view of random tensors has been adopted first by E. Horobet and J. Draisma in \cite{DHrankone} and by P. Breiding in \cite{Breidingeigenvalues} for the study of the expected number of eigenvalues of a random symmetric tensor, with respect to the Bombieri--Weyl distribution. Under the identification between symmetric tensors and homogeneous polynomials, eigenvalues correspond to critical values of the restriction of the polynomial to the unit sphere. Eigenvectors correspond to critical points of the polynomial: under the Veronese embedding these critical points give rank--one tensors that are critical points of the distance function on the Veronese variety from the given tensor.  Among these critical points (which are rank--one tensors) the closest to the original tensor are its best rank--one approximations. In \cite{Breidingeigenvalues} the average number of such critical points is computed. In this paper we will instead give the size and estimate the probability of the set of tensors which admit a unique best rank--one approximation. 
\end{remark} 

The use of Weyl's Tube Formula is fairly standard for results of this type \cite{Burgissercondition, BL}: it allows to deduce an \emph{exact} expression, for $\varepsilon>0$ small enough, of the volume of an $\varepsilon$--neighbourhood of a smooth submanifold $W$ of the sphere, or the euclidean space, as a function of some differential--geometric quantities of $W$, called its curvature coefficients. Our main contribution is the nontrivial computation of the curvature coefficients of the spherical Veronese variety and the explicit quantification of the above expression ``for $\varepsilon>0$ small enough'' for this variety, through the computation of its reach.\par 
 One could generalize this question to higher ranks by looking at secant varieties to the Veronese, whose geometry has been intensively studied, see \cite{secantlectures} for a survey. We propose to investigate this in future works.
 
 We now describe the main ingredients and state the main results of our work in more detail.

\subsection{The spherical Veronese}
The main object we consider in this work is the real spherical Veronese variety $\SW_{n,d}$, which is the intersection of the Veronese variety in $\R[x_0, \ldots, x_n]_{(d)}\simeq \R^{N+1}$ with the unit sphere for the Bombieri--Weyl norm:
$$\SW_{n,d}:=\W_{n,d}\cap S^N.$$ 
Here $N+1={n+d\choose d}$ is the dimension of the space of symmetric tensors, or equivalently, of homogeneous polynomials.
We regard the set $\SW_{n,d}$ as the image of the spherical Veronese embedding associated to the Bombieri--Weyl basis, or its double copy, depending on the parity of $d$. This embedding is the smooth map $\widehat{\nu}_{n,d}:S^n\to S^N$ given by
\[x \xmapsto{\widehat \nu_{n,d}} \biggl(\binom{d}{\alpha}^{\frac{1}{2}}x^{\alpha}\biggr)_{\alpha},\]
where $\alpha \in \Z^{n+1}_{\geq0}$ satisfy $\alpha_0+\dots+\alpha_n=d$, $\binom{d}{\alpha}$ is the multinomial coefficient, and $S^n$ is the euclidean sphere in $\R^{n+1}$. Denoting $\widehat{\nu}_{n,d}(S^n)$ by $\SV_{n,d}$, we see that
$$\SW_{n,d}=\SV_{n,d}\cup -\SV_{n,d},$$
where $\SV_{n,d}=-\SV_{n,d}$ if $d$ is odd and $\SV_{n,d}\cap (-\SV_{n,d})=\emptyset$ if $d$ is even. For this reason, we will call $\SV_{n,d}$ the \emph{spherical Veronese surface}, to distinguish it from the spherical Veronese variety $\SW_{n,d}$, in the case $d$ is even. In the projective picture, the difference between the two ceases to exist:
$$\PV_{n,d}:=\mathrm{P}(\SV_{n,d})=\mathrm{P}(\SW_{n,d})\subset \R\mathrm{P}^{N},$$
where $ \R\mathrm{P}^{N}$ denotes the projectivization of the space of real homogeneous polynomials.
Recall that $\SV_{n,d}$ parametrizes the $d$--th powers of norm--$1$ linear  forms on $\R^{n+1}$ and, therefore, rank--one and norm--one tensors up to signs. Hence, the spherical Veronese surface $\SV_{n,d}$ corresponds to an orbit for the action of $O(n+1)$ on homogenous polynomials by change of variables. Even more is true: when turning $\SV_{n,d}$ into a Riemannian manifold with the metric induced by the Bombieri--Weyl scalar product, the transitive action of $O(n+1)$ on ${\SV}_{n,d}$ is through isometries induced by isometries of $S^N$, given the invariance property of the Bombieri--Weyl structure. The immediate, yet crucial, consequence is that the extrinsic geometry of the isometric embedding $\SV_{n,d} \hookrightarrow S^N$ is exactly the same at every point. The same conclusion clearly holds for $\SW_{n,d}\hookrightarrow S^N$. \par

\subsection{Weyl's tube formula and the reach of an embedding}
Let $(\overline{M},g)$ be a Riemannian manifold and $M \hookrightarrow \overline{M}$ be an isometric embedding of a compact smooth submanifold. We can consider the set of points in $\overline{M}$ at distance less than a given $\varepsilon>0$ from $M$ and call such a set a \textit{tubular neighbourhood} of $M$ in $\overline{M}$ of radius $\varepsilon$, denoted as $\mathcal{U}(M,\varepsilon)$. 

It is well known that for smooth compact embeddings $M \hookrightarrow \overline{M}$ and small enough radii, the exponential map on the normal bundle provides a smooth parametrization of the tubular neighbourhood. 
This description is what really underlies the celebrated \lq\lq Weyl's tube formula\rq\rq\ (\cite{Weyltubes}), which constitutes one of the main tools to compute the volume of tubular neighbourhoods in a euclidean or spherical ambient space. This formula expresses the volume as the linear combination
\[\mathrm{Vol}(\mathcal{U}(M,\varepsilon))= \sum_{0\leq e \leq n, \ e \ \text{even}} K_{s+e}(M)J_{N,s+e}(\varepsilon),\]
where $N$ is the dimension of the ambient space, $n$ is the dimension of $M$ and $s:=N-n$ is the codimension of the embedding. The functions $J$'s do not depend on the specific submanifold $M$ and are explicitly known in both the euclidean and spherical cases. The most remarkable aspect of the formula is that the coefficients $K$'s are isometric invariants of the embedding and can be expressed in terms of curvature, from which they are named \textit{curvature coefficients} of the embedding. Remark that nowadays Weyl's tube formula has been re-interpreted in the more general framework of \lq\lq integral geometry\rq\rq, which deals with integrals over a submanifold of polynomials in the entries of the second fundamental form. R. Howard in \cite{Howardkinematic} showed  how the above formula fits in this context and gave a full characterization of the polynomials appearing in Weyl's work. 
\par \smallskip

In the case of the Veronese \emph{variety} ${\SW}_{n,d} \hookrightarrow S^N$, the tubular neighbourhood $\mathcal{U}({\SW}_{n,d},\varepsilon)$ gives a description of the norm--$1$ symmetric tensors that are $\varepsilon$--close to a rank--$1$ tensor in the Bombieri--Weyl metric, in the ambient sphere. As already pointed out, it follows that asking for the probability for a symmetric norm--$1$ tensor to be close to rank--$1$ boils down to computing the normalized volume of this tubular neighbourhood. \par 

For practical reasons, in the paper we will work with $\SV_{n,d}$ instead of its ``double'' $\SW_{n,d}$. There are, however, two technical issues to consider here. The first one is that there will be a factor of $2$ to be taken into account when switching from the Veronese surface $\SV_{n,d}$ to the rank--one variety $\SW_{n,d}$, depending on the parity of $d$. The second one is that the intersection of a $\delta$--neighbourood of the set of rank--one tensors with the unit sphere becomes an $\varepsilon$--neighbourhood of $\SW_{n,d}$ in the unit sphere, with 
$$\varepsilon=\mathrm{arcsin}(\delta).$$ This is why we will use the parameter ``$\varepsilon$'' to formulate the results on the sphere and the parameter ``$\delta$'' for the results in the vector space of tensors.

\subsection{The reach of the Veronese variety}Our aim is to exploit Weyl's tube formula to compute the volume of $\mathcal{U}({\SW}_{n,d},\varepsilon)$. This requires first of all the knowledge of the radii for which the above expression holds. Since the formula is based on the parametrization through the normal exponential map, the supremum of the radii for which this is a good parametrization, or at least a lower bound on that, is what we need to understand to meaningfully use Weyl's result. This quantity is usually called the \textit{reach} of the embedding $M \hookrightarrow \overline{M}$ and in general computing it is a very difficult task, often unfeasible since it requires to study not only how normal geodesics originating from every point of the submanifold behave, but also how and when geodesics starting from different points cross each other, in order to avoid overlappings in the image. 

In our case, recalling the invariance property of ${\SW}_{n,d} \hookrightarrow S^N$ under the action of the orthogonal group $O(n+1)$, we do not need to study normal geodesics originating from any point, but it is enough to choose a specific one and perform computations involving only geodesics originating from this chosen one. This drastically reduces the complexity of the computation, allowing us to obtain the following result, stated in a more detailed form in Theorem \ref{reachtheorem}.\begin{thm}[The reach of the spherical Veronese]
The reach of the spherical Veronese variety ${\SW}_{n,d} \hookrightarrow S^N$ is given by
\begin{align*}
    \rho({\SW}_{n,d})=\begin{cases}
    \frac{\pi}{4} & 2\leq d\leq 5\\
        \frac{1}{\sqrt{2}}\sqrt{1+\frac{1}{d-1}} & d\geq 6
    \end{cases}.
\end{align*}
The same result holds for the reach of the Veronese surface $\SV_{n,d}\hookrightarrow S^{N}.$
\end{thm}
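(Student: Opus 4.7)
The reach of the spherical Veronese in $S^N$ equals the minimum of two classical quantities: the \emph{focal radius} (the first time along any unit normal geodesic at which the normal exponential map ceases to be a local diffeomorphism) and the \emph{bottleneck radius} (the spherical half-distance between the closest pair of distinct points of $\SV_{n,d}$ whose common midpoint in $S^N$ lies in both normal cones). The transitive isometric action of $O(n+1)$ on $\SV_{n,d}$ lets us fix once and for all the base point $p_0=\widehat\nu_{n,d}(e_0)=x_0^d$ and carry out both computations at this single point.

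The focal contribution relies on the description of the second fundamental form at $p_0$ promised in the introduction. After identifying the ``quadratic'' part of $N_{p_0}\SV_{n,d}$---spanned by the classes of the monomials $x_0^{d-2}x_ix_j$ for $1\leq i\leq j\leq n$---with the space of real symmetric $n\times n$ matrices, the shape operator $A_\nu$ of a unit normal $\nu$ becomes, after an explicit rescaling of diagonal versus off-diagonal entries, a normalized $\mathrm{GOE}(n)$-type matrix. Components of $\nu$ outside the quadratic subspace contribute zero to $A_\nu$, so the computation of $\kappa^*:=\sup_{\|\nu\|=1}\|A_\nu\|_{\mathrm{op}}$ reduces to a Frobenius-constrained extremum of the operator norm over symmetric matrices, achieved on the rank-one locus; the resulting conversion from $\kappa^*$ to the focal radius produces the closed form $\frac{1}{\sqrt{2}}\sqrt{1+\frac{1}{d-1}}$.

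For the bottleneck, $O(n+1)$-invariance reduces the problem to the one-parameter family of pairs $\bigl\{p_0,\,q_\theta:=(\cos\theta\, e_0+\sin\theta\, e_1)^{\otimes d}\bigr\}$ with $\theta\in(0,\pi/2]$. Expanding $q_\theta$ in the Bombieri--Weyl orthonormal basis and pairing the chord $p_0-q_\theta$ with the unit tangent direction $\hat X_1$ at $p_0$ associated with $x_0^{d-1}x_1$ yields
\[\langle p_0 - q_\theta,\, \hat X_1\rangle_F = -\sqrt{d}\,\cos^{d-1}(\theta)\sin(\theta),\]
which vanishes only at the endpoints $\theta=0$ and $\theta=\pi/2$. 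Hence the unique nontrivial double-normal configuration is the pair of orthogonal rank-one tensors $(e_0^{\otimes d},e_1^{\otimes d})$ at spherical distance $\pi/2$, producing the bottleneck $\pi/4$.

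Taking the minimum yields $\rho(\SV_{n,d})=\min\bigl\{\frac{\pi}{4},\,\frac{1}{\sqrt{2}}\sqrt{1+\frac{1}{d-1}}\bigr\}$; the crossover occurs between $d=5$ (where $\sqrt{5/8}>\pi/4$) and $d=6$ (where $\sqrt{3/5}<\pi/4$), producing the case split of the statement. Passing from $\SV_{n,d}$ to $\SW_{n,d}=\SV_{n,d}\cup(-\SV_{n,d})$ only introduces further bottlenecks between a point and an antipodal copy, at half-distance at least $\pi/2$, so $\rho(\SW_{n,d})=\rho(\SV_{n,d})$. I expect the main technical obstacle to be the precise $\mathrm{GOE}$-based computation of $\kappa^*$ and its conversion to the focal contribution: identifying the correct scaling factors between $\nu$ and $A_\nu$ on the whole normal space, and evaluating the resulting symmetric-matrix extremum to the specific closed form $\frac{1}{\sqrt{2}}\sqrt{1+\frac{1}{d-1}}$. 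The bottleneck analysis, by contrast, reduces cleanly to the one-variable polynomial identity displayed above.
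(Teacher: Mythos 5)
Your proposal follows the same two-step strategy as the paper: split $\rho$ as $\min(\rho_1,\rho_2)$, use the transitive isometric $O(n+1)$-action to reduce both computations to a single base point $x_0^d$, extract $\rho_1$ from the maximal normal curvature $\kappa^*$, and extract $\rho_2$ by classifying double normals through $x_0^d$. The differences are in the route to each ingredient, and there is one small error worth flagging.

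For $\rho_1$, the paper does \emph{not} pass through the $\mathrm{GOE}$ description of the shape operator at this stage (that description, your ``normal bundle splitting,'' is established only in the next section, for the curvature coefficients). Instead it takes a single arclength geodesic $\gamma=\widehat\nu_{n,d}\circ\alpha$ with $\alpha$ a great circle through $(1,0,\dots,0)$ in $S^n$, computes $\ddot\gamma(0)$ explicitly in Bombieri--Weyl coordinates, projects onto $T_{x_0^d}S^N$, and reads off $\kappa^*=\sqrt{2}\sqrt{(d-1)/d}$ directly; by the $O(n)$-symmetry of the stabilizer of $x_0^d$ this single direction suffices. Your route --- identify the quadratic slice of $N_{x_0^d}\SV_{n,d}$ isometrically with $\mathrm{Sym}(n,\R)$, write $L_\eta=\sqrt{2(d-1)/d}\,Q(\eta)$ for a linear isometry $Q$, and note $\sup_{\|Q\|_F=1}\|Q\|_{\mathrm{op}}=1$ on the rank-one locus --- gives the same $\kappa^*$ and is arguably more conceptual, but it front-loads the entire second-fundamental-form computation, so it trades independence for uniformity with the rest of the paper. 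The ``conversion from $\kappa^*$ to the focal radius'' you are cautious about is resolved in the paper by declaring $\rho_1^{-1}=\sup\|\ddot\gamma(0)\|_{T_{x_0^d}S^N}=\kappa^*$ (their formula for $\rho_1$ for submanifolds of spheres); your hesitation there is well placed and you should satisfy yourself about exactly which focal-radius formula you are invoking in the spherical ambient.

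For $\rho_2$, you and the paper do essentially the same thing. The paper parametrizes great circles from $x_0^d$ in normal directions and solves for when and how they re-intersect $\SV_{n,d}$; your ``pair-of-points'' phrasing with $q_\theta=(\cos\theta\,e_0+\sin\theta\,e_1)^{\otimes d}$ and the pairing $\langle q_\theta,\hat X_1\rangle_F=\sqrt{d}\cos^{d-1}\theta\sin\theta$ is the same computation. (Strictly one should let $\theta$ range over a full half-period, picking up the antipodal endpoints $-x_0^d$ and $-x_1^d$ for $d$ odd, but these have half-length $\ge\pi/4$ so the conclusion is unchanged.) One genuine misstatement: in passing from $\SV_{n,d}$ to $\SW_{n,d}=\SV_{n,d}\cup(-\SV_{n,d})$ for $d$ even, a cross-component double normal between $l^d$ and $-m^d$ has spherical length $\arccos\bigl(-\langle l,m\rangle^d\bigr)\ge\pi/2$, i.e.\ half-distance at least $\pi/4$, not $\pi/2$ as you write. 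This still gives $\rho_2(\SW_{n,d})=\pi/4$ (it ties, rather than dominates, the within-component bottleneck), so the final statement is unaffected, but the justification as written is off by a factor of two.
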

Given the interpretation of the neighbourhood of the Veronese variety in terms of symmetric tensors already discussed, this theorem has some interesting consequences. In fact, by \cite{friedland}, the best rank-one approximation of a symmetric tensor is still symmetric and, by construction, every real symmetric tensor which is sufficiently close to rank--one tensors admits a unique best rank--one approximation (see Corollary \ref{bestrankonecorollary}), therefore we can restate the result as follows.

\begin{cor}[Uniqueness of best rank--one approximations]\label{bestrankonecorollaryintro}
Every symmetric tensor $p$ at distance less than $\sin(\rho(V_{n,d})) \|p\|_{\mathrm{BW}}$ from rank--one admits a unique best rank--one approximation.\end{cor}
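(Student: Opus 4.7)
My plan is to reduce the Bombieri--Weyl distance hypothesis on $p$ to a spherical distance statement on $S^N$, and then to apply the defining property of the reach as the radius of the largest tube with a unique--nearest--point property.

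By homogeneity I may assume $\|p\|_{\mathrm{BW}}=1$. The Veronese variety $\W_{n,d}$ is a cone through the origin and coincides with $\R\cdot \SW_{n,d}$, so for any $v\in \SW_{n,d}$ the minimum over $t\in\R$ of $\|p-tv\|_{\mathrm{BW}}^2$ is attained at $t=\langle p,v\rangle_{\mathrm{BW}}$ and equals $1-\langle p,v\rangle_{\mathrm{BW}}^2$. Minimizing further over $v\in \SW_{n,d}$ yields
\[
\mathrm{dist}_{\mathrm{BW}}(p,\W_{n,d})^2 \;=\; 1 - \max_{v\in \SW_{n,d}}\langle p,v\rangle_{\mathrm{BW}}^2 \;=\; \sin^2(\vartheta),
\]
where $\vartheta:=\mathrm{dist}_{S^N}(p,\SW_{n,d})$. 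Because $\rho(\SW_{n,d})<\pi/2$ (as one checks directly from the explicit values in the theorem above), the sine function is strictly increasing on $[0,\rho(\SW_{n,d})]$, and the hypothesis $\mathrm{dist}_{\mathrm{BW}}(p,\W_{n,d})<\sin(\rho(\SW_{n,d}))$ becomes $\vartheta<\rho(\SW_{n,d})$.

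Next I invoke the defining property of the reach: within the open spherical tube of radius $\rho(\SW_{n,d})$ around $\SW_{n,d}$, the normal exponential map is a diffeomorphism onto its image, hence every point of the tube admits a unique spherical nearest point in $\SW_{n,d}$. Applied to $p$, this produces a unique maximizer $v_\ast\in \SW_{n,d}$ of $\langle p,v\rangle_{\mathrm{BW}}$, and consequently a unique best approximation $q_\ast=\langle p,v_\ast\rangle_{\mathrm{BW}}\,v_\ast$ of $p$ among symmetric rank--one tensors.

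To upgrade this to uniqueness among \emph{all} (not necessarily symmetric) rank--one tensors, I quote the theorem of Friedland \cite{friedland}, which guarantees that any best rank--one approximation of a symmetric tensor is itself symmetric. The only delicate bookkeeping, which I would verify carefully, concerns the sign handling when $d$ is even and $\SW_{n,d}=\SV_{n,d}\sqcup -\SV_{n,d}$: the sheet containing $v_\ast$ and the sign of the scalar $t=\langle p,v_\ast\rangle_{\mathrm{BW}}$ combine into a single well--defined symmetric rank--one tensor $\pm y_\ast^{\otimes d}$, so no spurious multiplicity is introduced by the two--sheeted structure. I do not expect any serious obstacle beyond this bookkeeping, since the heavy lifting is done by the reach computation already available.
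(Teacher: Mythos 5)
Your argument is correct and follows essentially the same route as the paper: convert the Bombieri--Weyl (linear) distance to the spherical distance via the cone structure $\W_{n,d}=\R\cdot\SW_{n,d}$, use that within the reach the normal exponential map is injective so that each point has a unique foot point on $\SW_{n,d}$, and invoke Friedland's result to pass from symmetric to general rank--one approximations. You are a bit more explicit than the paper about the $\sin$--conversion and the formula $q_\ast=\langle p,v_\ast\rangle_{\mathrm{BW}}v_\ast$, but the key ingredients and their order are the same.
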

In particular, the normalized volume of a neighbourhood of the Veronese variety of radius $\rho(V_{n,d})$ would therefore provide a lower bound for the probability that such tensors have a unique best rank--one approximation. This bound can be explicitly computed by plugging in $\varepsilon=\rho(V_{n,d})$ in Theorem \ref{finalvolumetheorem}. 

\begin{remark}[Bottlenecks width of the Veronese variety]From Theorem \ref{reachtheorem} it actually follows that the width of the narrowest bottleneck of the Veronese variety is $\frac{\pi}{2}$. Bottlenecks of algebraic varieties play an important role in optimization problems. For a general algebraic variety, the number of bottlenecks can be estimated using classical invariants such as Chern classes and polar classes \cite{dirocco1}. For a general discussion on this subject see \cite{dirocco1, dirocco2}. 
\end{remark}

\subsection{The curvature coefficients of the Veronese variety}
The other ingredient needed in Weyl's formula is the curvature properties of the embedding, in particular the Weingarten operator along normal directions, which encodes the second fundamental form. Again by the invariance of the extrinsic geometry of ${\SW}_{n,d} \hookrightarrow S^N$, it is enough to compute this at a specific point, which we choose to be $x_0^d$ for simplicity of computations. \par \smallskip
Before stating our result, recall that we have denoted by $\mathrm{GOE}(n)$ the \emph{Gaussian Orthogonal Ensemble}, i.e. the set $\mathrm{Sym}(n, \R)$ endowed with the Gaussian probability distribution coming from the Frobenius scalar product, see Section \ref{GOEKostlansection} for more details. \par \smallskip
Our main result on the extrinsic geometry of the embedding $\SW_{n,d}\hookrightarrow S^N$ is the following, and we refer to Theorem \ref{normaldecompositiontheorem} for a more comprehensive statement. 
\begin{thm}[The normal bundle splitting]\label{thm:decointro}Let $p\in {\SW}_{n,d}$ and denote by $L_{\eta}$ the symmetric matrix representing second fundamental form of ${\SW}_{n,d} \hookrightarrow S^N$ along a normal direction $\eta \in N_{p}{\SW}_{n,d}$. There exists an orthogonal decomposition $N_{p}{\SW}_{n,d} = W \oplus P$ such that the following statements hold: 
\begin{enumerate}
    \item $L_{\eta}=0$ for every $\eta \in P$;
    \item $W$ with its induced Bombieri--Weyl metric is isometric to $\mathrm{Sym}(n)$ with the Frobenius one. Moreover, if we pick $\eta \in W$ Gaussian, then $L_{\eta} \sim \sqrt{2}\biggl(\frac{d-1}{d}\biggr)^{\frac{1}{2}}\mathrm{GOE}(n)$.
\end{enumerate}  
\end{thm}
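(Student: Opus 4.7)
By the $O(n+1)$-invariance of the embedding $\SW_{n,d} \hookrightarrow S^N$ discussed above, it suffices to work at a single point, which I take to be $p = x_0^d$. Parametrize the round sphere $S^n$ near $e_0$ by $\sigma(y) := (\sqrt{1-\|y\|^2},\, y_1, \ldots, y_n)$ and set $g(y) := \widehat{\nu}_{n,d}(\sigma(y))$, viewing the image polynomials in formal variables $z_0, \ldots, z_n$ as $g(y)(z) = \langle \sigma(y), z\rangle^d$. Direct differentiation at $y = 0$ yields
\[
g(0) = z_0^d, \qquad \partial_{y_i} g|_0 = d\, z_0^{d-1} z_i, \qquad \partial_{y_i}\partial_{y_j} g|_0 = d(d-1)\, z_0^{d-2} z_i z_j - d\, \delta_{ij}\, z_0^d,
\]
so $T_p \SW_{n,d} = \mathrm{span}\{z_0^{d-1} z_i : 1 \leq i \leq n\}$ and the normal space splits BW-orthogonally according to the power of $z_0$:
\[
N_p \SW_{n,d} = \bigoplus_{k=2}^{d} W_k, \qquad W_k := z_0^{d-k}\cdot \R[z_1, \ldots, z_n]_{(k)}.
\]
I set $W := W_2$ and $P := \bigoplus_{k \geq 3} W_k$.

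\textbf{Vanishing on $P$.} For $\eta \in N_p\SW_{n,d} \subset T_pS^N$, the summand $-d\delta_{ij} z_0^d = -d\delta_{ij}\, p$ is BW-orthogonal to $\eta$ (since $\eta \perp p$), so the matrix of the shape operator satisfies
\[
(L_\eta)_{ij} \;=\; \langle \mathrm{II}(\partial_{y_i}g|_0,\, \partial_{y_j}g|_0),\, \eta\rangle_{\mathrm{BW}} \;=\; d(d-1)\, \langle z_0^{d-2} z_i z_j,\, \eta\rangle_{\mathrm{BW}}.
\]
Every monomial on the right has $z_0$-degree exactly $d-2$, while every monomial appearing in $\eta \in P$ has $z_0$-degree at most $d-3$. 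By BW-orthogonality of distinct monomials, $L_\eta \equiv 0$ for $\eta \in P$, giving part (1).

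\textbf{The isometry $\Psi$ and the GOE law.} For part (2), define $\Psi\colon \mathrm{Sym}(n) \to W$ by
\[
\Psi(A) := \sqrt{\tfrac{d(d-1)}{2}}\; z_0^{d-2}\; z^{\top} A z.
\]
Expanding $z^{\top}Az = \sum_k A_{kk} z_k^2 + 2\sum_{k<l} A_{kl} z_k z_l$ and using the multinomial BW norms $\|z_0^{d-2} z_k z_l\|_{\mathrm{BW}}^2 = 1/[d(d-1)]$ for $k \neq l$ and $\|z_0^{d-2} z_k^2\|_{\mathrm{BW}}^2 = 2/[d(d-1)]$, a monomial-by-monomial computation gives $\|\Psi(A)\|_{\mathrm{BW}} = \|A\|_F$, so $\Psi$ is a linear isometry. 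Now pick the BW-orthonormal basis $v_i := \sqrt{d}\, z_0^{d-1} z_i$ of $T_p\SW_{n,d}$; rescaling the formula above, $\mathrm{II}(v_i, v_j) = (d-1)\, z_0^{d-2} z_i z_j$. A direct BW-pairing yields
\[
(L_{\Psi(A)})_{ij} \;=\; \langle \mathrm{II}(v_i, v_j),\, \Psi(A)\rangle_{\mathrm{BW}} \;=\; \sqrt{\tfrac{2(d-1)}{d}}\, A_{ij},
\]
so that $L_{\Psi(A)} = \sqrt{2(d-1)/d}\, A$. Since $\Psi$ is a linear isometry, a BW-Gaussian vector in $W$ pulls back to a Frobenius-Gaussian vector in $\mathrm{Sym}(n)$, which is by definition $\mathrm{GOE}(n)$. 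Therefore $L_\eta \sim \sqrt{2}\,\bigl(\tfrac{d-1}{d}\bigr)^{1/2}\mathrm{GOE}(n)$, as required.

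\textbf{Main obstacle.} The delicate part is the bookkeeping of BW normalizations. One must consistently distinguish the ambient Euclidean second derivative from the sphere's second fundamental form (the $-d\delta_{ij} z_0^d$ component is absorbed by the pairing against $\eta \in T_pS^N$), rescale the parametrization-tangent vectors $\partial_{y_i}g|_0$ (which have BW norm $\sqrt{d}$) to unit length to extract the matrix $L_\eta$, and choose the constant $\sqrt{d(d-1)/2}$ in $\Psi$ precisely so as to absorb the asymmetric multinomial weights (ratio $1:2$) between pure squares and mixed products, matching the standard Frobenius weights on $\mathrm{Sym}(n)$. Once these normalizations are set, both parts reduce to elementary monomial identities.
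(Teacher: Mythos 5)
Your proof is correct and follows essentially the same strategy as the paper: exploit orbit homogeneity to reduce to $p = x_0^d$, compute the second derivatives of a local parametrization (the paper uses $((x_0 + \sum a_i x_i)/(1+\|a\|^2)^{1/2})^d$, you use the graph chart $\sqrt{1-\|y\|^2}$; both agree to second order at the pole, so all formulae coincide), split the normal space by the $x_0$-degree into $W \oplus P$, kill $L_\eta$ on $P$ by orthogonality of distinct Bombieri--Weyl monomials, and identify $(W, \langle\cdot,\cdot\rangle_{\mathrm{BW}})$ isometrically with $(\mathrm{Sym}(n), \langle\cdot,\cdot\rangle_F)$ to read off the $\sqrt{2}\sqrt{(d-1)/d}\,\mathrm{GOE}(n)$ law. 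The only cosmetic difference is that you package the isometry in one closed formula $\Psi(A) = \sqrt{d(d-1)/2}\,z_0^{d-2}z^\top A z$ and verify $L_{\Psi(A)} = \sqrt{2(d-1)/d}\,A$ directly, whereas the paper defines the isometry on orthonormal bases and reads off the entries $L_{\eta,ii}$, $L_{\eta,ij}$ one at a time; the content is identical.
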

This theorem could find applications going beyond the scope of this paper. It gives a full description of the second fundamental form in terms of $\mathrm{GOE}(n)$ matrices. Using this description in Weyl's tube formula to compute the curvature coefficients of the spherical Veronese, the consequence is that the computation of some integrals on the normal bundle depending on the second fundamental form boils down to computing the expectation of a determinant involving $\mathrm{GOE}(n)$ matrices. This is reduced to an easy, purely combinatorial computation (see Appendix \ref{expectationappendix}) and thus we obtain the following explicit expressions for the curvature coefficients.
\begin{thm}[The curvature coefficients of the spherical Veronese]
The curvature coefficients of the Veronese variety ${\SW}_{n,d} \hookrightarrow S^N$ are as follows:
\begin{align*}
    K_{N-n+j}({\SW}_{n,d})=(-1)^{\frac{j}{2}}d^{\frac{n}{2}}\biggl(\frac{d-1}{d}\biggr)^{\frac{j}{2}}
    \frac{2^{n+2-j}\pi^{\frac{N}{2}}\Gamma\left(\frac{n}{2}+1\right)}{\Gamma\left(\frac{j}{2}+1\right)\Gamma(n+1-j)\Gamma\left(\frac{N+j-n}{2}\right)}
    \end{align*}

for $0 \leq j \leq n$ and $j$ even, and $K_{N-n+j}({\SW}_{n,d})=0$
otherwise. 
\end{thm}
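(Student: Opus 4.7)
The plan is to combine the standard integral-geometric form of Weyl's tube formula, the $O(n+1)$-invariance already exploited in Section~1.3, the normal-bundle splitting of Theorem~\ref{thm:decointro}, and a Gaussian (Wick) expectation over $\mathrm{GOE}(n)$ matrices.

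The starting point is the classical expression of $K_{N-n+j}(M)$ for a closed submanifold $M^n$ of codimension $s=N-n$ in $S^N$ as an integral over $M$ of a fixed polynomial in the entries of the second fundamental form, namely (up to universal constants involving the volumes of the normal spheres and $\Gamma$-values) the average over a Gaussian normal vector $\eta \in N_pM$ of the $j$-th elementary symmetric polynomial of the eigenvalues of the Weingarten operator $L_\eta$:
\begin{align*}
K_{N-n+j}({\SW}_{n,d}) \;=\; c_{n,N,j}\,\int_{{\SW}_{n,d}} \mathbb{E}_{\eta \sim \mathcal{N}(0,I)}\!\bigl[\sigma_j(L_\eta)\bigr]\,dV.
\end{align*}
The isometric homogeneity of ${\SW}_{n,d}\hookrightarrow S^N$ makes the integrand pointwise constant, so the integral collapses to $\mathrm{Vol}({\SW}_{n,d})$ times the expectation at a chosen base point $p=x_0^d$. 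The volume of ${\SW}_{n,d}$ is read off from the fact that the spherical Veronese embedding multiplies the round metric of $S^n$ by the factor $d$, yielding $d^{n/2}\mathrm{Vol}(S^n)$ and, for $d$ even, an extra factor of $2$ from the double-copy structure; together with the explicit constant $c_{n,N,j}$ this already produces the $d^{n/2}$, the $\pi^{N/2}\Gamma(n/2+1)$, and part of the powers of $2$ in the statement.

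Next, Theorem~\ref{thm:decointro} cuts the normal Gaussian integral down to $W\cong\mathrm{Sym}(n)$: since $L_\eta$ vanishes on $P$ and $\sigma_j(L_\eta)$ depends only on the projection $\eta_W$ of $\eta$ onto $W$, independence of $\eta_W$ and $\eta_P$ allows us to integrate out the $P$-component at the cost of a ratio of normal-sphere volumes, which is the origin of the $\Gamma\!\left(\frac{N+j-n}{2}\right)$ in the denominator. Transporting the remaining integral through the isometry $W\simeq\mathrm{Sym}(n)$ of Theorem~\ref{thm:decointro} and the identification $L_\eta \sim \sqrt{2}\bigl(\tfrac{d-1}{d}\bigr)^{1/2}\mathrm{GOE}(n)$ converts the expectation into
\begin{align*}
\mathbb{E}_{\eta}\bigl[\sigma_j(L_\eta)\bigr] \;=\; 2^{j/2}\Bigl(\tfrac{d-1}{d}\Bigr)^{j/2}\,\mathbb{E}_{Q\sim\mathrm{GOE}(n)}\bigl[\sigma_j(Q)\bigr],
\end{align*}
which is already responsible for the factor $\bigl(\tfrac{d-1}{d}\bigr)^{j/2}$ in the statement and for the vanishing at odd $j$ by the $Q\mapsto -Q$ symmetry of $\mathrm{GOE}$.

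Finally, the GOE moment $\mathbb{E}[\sigma_j(Q)]$ is computed via Wick's theorem, which is the purpose of Appendix~\ref{expectationappendix}. For $j$ even, each $j\times j$ principal minor $\det(Q_{II})$ expands as an alternating combinatorial sum over pair partitions of $\{1,\dots,j\}$, and summing over the $\binom{n}{j}$ choices of $I$ yields a closed form proportional to $(-1)^{j/2}\,\Gamma(n+1)/(\Gamma(n+1-j)\,\Gamma(j/2+1))$, accounting simultaneously for the sign $(-1)^{j/2}$ and for the remaining Gamma quotient in the theorem. The principal obstacle is purely bookkeeping: tracking the universal constants through Weyl's formula, the spherical-versus-Gaussian normalization on normal directions, the Bombieri--Weyl-versus-Frobenius identification, and the $\SV_{n,d}$-versus-$\SW_{n,d}$ doubling, and verifying that they collapse to exactly the Gamma-function product written in the statement.
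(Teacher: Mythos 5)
Your proposal is correct and follows essentially the same route as the paper's proof: Weyl's integral formula plus $O(n+1)$-homogeneity to reduce to the base point, the $W\oplus P$ splitting of the normal space from Theorem \ref{normaldecompositiontheorem}, and reduction to the GOE moment $\mathbb{E}\bigl[\det(I_n-\lambda Q)\bigr]$ computed combinatorially in Appendix \ref{expectationappendix} (the paper merely splits the unit normal sphere $S(W\oplus P)$ via Lemma \ref{integrationlemma} into a sphere-times-disk integral before passing to the Gaussian, which is the same thing up to the order in which the radial normalization is performed). Two small bookkeeping cautions: the $\Gamma\bigl(\tfrac{N+j-n}{2}\bigr)$ in the denominator arises from the radial integral $\int_0^\infty r^{N-n+j-1}e^{-r^2/2}\,dr$ in the Gaussian-to-spherical conversion of a degree-$j$ homogeneous integrand over the \emph{full} normal space rather than from ``integrating out the $P$-component'' (which under a Gaussian weight costs nothing since the $P$-marginal integrates to one), and $\mathrm{Vol}(\SW_{n,d}) = d^{n/2}\,\mathrm{Vol}(S^n)$ for \emph{both} parities, because for $d$ even the doubling to $\SW_{n,d}=\SV_{n,d}\sqcup(-\SV_{n,d})$ exactly cancels the factor $\tfrac12$ coming from $\widehat\nu_{n,d}$ being $2{:}1$, so no extra factor of $2$ should survive.
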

\noindent We remark that similar results hold true for the projective Veronese variety, using the double covering $S^N \longrightarrow \mathds{RP}^N$, and for the spherical Veronese surface. \par
Plugging these coefficients back in Weyl's tube formula we also obtain the explicit expression of the volume of the tubular neighbourhood for radii smaller than the reach (see Theorem \ref{finalvolumetheorem}), in particular giving an answer to question stated at the beginning of the paper.
\begin{thm}[The probability of being close to rank--one]\label{thm:volumeintro}
Let $p$ be a random Bombieri--Weyl symmetric tensor of order $d$ on $\R^{n+1}$ and $\W_{n,d} \subset \R[x_0,\dots,x_n]_{(d)}\simeq \R^{N+1}$ be the Veronese variety of rank--one tensors. For every $\delta$ such that $0\leq\mathrm{arcsin}(\delta) < \frac{1}{\sqrt{2}}\sqrt{1+\frac{1}{d-1}}$ we have
\begin{align*}
    \mathbb{P}\big(\mathrm{dist}_{\mathrm{BW}}(p,\W_{n,d})\leq \delta\|p\|_{\mathrm{BW}}\big)  = & \sum_{\substack{0\le j \le n \\ \text{ \ j even}}}(-1)^{\frac{j}{2}}d^{\frac{n}{2}}\left(\frac{d-1}{d}\right)^{\frac{j}{2}} 2^{n-j+1}\pi^{-\frac{1}{2}}\\
    &\cdot \frac{\Gamma\left(\frac{n}{2}+1\right) \Gamma\left(\frac{N+1}{2}\right) }{\Gamma\left(\frac{j}{2}+1\right)\Gamma(n+1-j)\Gamma\left(\frac{N+j-n}{2}\right)}\int_0^{\frac{\delta}{\sqrt{1-\delta^2}}}{\frac{t^{N-n+j-1}}{(1+t^2)^{\frac{N+1}{2}}}\mathrm{dt}.}
\end{align*}
\end{thm}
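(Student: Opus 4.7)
The plan is to reduce the probability to a normalized volume of a spherical tubular neighborhood and then apply Weyl's tube formula with the curvature coefficients and reach already established.

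First, I would exploit the conic invariance of the condition and the rotational invariance of the Bombieri--Weyl Gaussian measure. Under the isomorphism $\phi$, the Gaussian measure on $\mathcal{S}^d(n+1)$ pushes forward to the Bombieri--Weyl/Kostlan Gaussian on $\R[x_0,\dots,x_n]_{(d)}$. Writing $p = \|p\|_{\mathrm{BW}}\cdot\widehat{p}$ with $\widehat p\in S^N$, and using that the event $\{\mathrm{dist}_{\mathrm{BW}}(p,\W_{n,d})\leq \delta\|p\|_{\mathrm{BW}}\}$ is scale-invariant, the Gaussian probability equals the uniform probability on $S^N$ that $\widehat p$ lies in the Euclidean $\delta$-neighborhood of $\W_{n,d}\cap S^N = \SW_{n,d}$. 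An elementary trigonometric computation converts the chordal condition $\mathrm{dist}_F(\widehat p,\SW_{n,d})\leq\delta$ into the intrinsic spherical condition $\mathrm{dist}_{S^N}(\widehat p,\SW_{n,d})\leq\varepsilon$ with $\varepsilon=\arcsin(\delta)$. Hence the desired probability equals $\mathrm{Vol}(\mathcal{U}(\SW_{n,d},\varepsilon))/\mathrm{Vol}(S^N)$.

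Next, since the hypothesis $0\leq\arcsin(\delta)<\tfrac{1}{\sqrt{2}}\sqrt{1+1/(d-1)}$ guarantees $\varepsilon<\rho(\SW_{n,d})$ by the reach theorem, Weyl's tube formula in the spherical ambient case applies and yields
\begin{equation*}
\mathrm{Vol}(\mathcal{U}(\SW_{n,d},\varepsilon)) = \sum_{\substack{0\le j\le n\\ j\text{ even}}} K_{N-n+j}(\SW_{n,d})\, J_{N,N-n+j}(\varepsilon),
\end{equation*}
where, in standard conventions,
\begin{equation*}
J_{N,k}(\varepsilon)=\mathrm{Vol}(S^{N-k-1})\int_0^\varepsilon \sin^{k-1}(t)\cos^{N-k}(t)\,dt.
\end{equation*}
I would then substitute the explicit curvature coefficients from the preceding theorem and the formula $\mathrm{Vol}(S^N)=2\pi^{(N+1)/2}/\Gamma((N+1)/2)$, collecting the powers of $\pi$, the Gamma factors, and the factor $d^{n/2}\bigl((d-1)/d\bigr)^{j/2}$.

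The final manipulation is the change of variables $u=\tan t$, which transforms the angular integrand via $\sin t=u/\sqrt{1+u^2}$, $\cos t=1/\sqrt{1+u^2}$, $dt=du/(1+u^2)$, so that
\begin{equation*}
\sin^{k-1}(t)\cos^{N-k}(t)\,dt = \frac{u^{k-1}}{(1+u^2)^{(N+1)/2}}\,du,
\end{equation*}
and the upper limit becomes $\tan(\arcsin\delta)=\delta/\sqrt{1-\delta^2}$. Setting $k=N-n+j$ produces exactly the integrand $t^{N-n+j-1}/(1+t^2)^{(N+1)/2}$ appearing in the statement. I expect the main obstacle to be purely bookkeeping: carefully tracking the spherical-measure normalizations and the multiple Gamma factors so that the $\mathrm{Vol}(S^{N-k-1})/\mathrm{Vol}(S^N)$ ratio combines with the curvature coefficient to produce the stated closed form, and handling the factor of $2$ distinguishing $\SV_{n,d}$ from $\SW_{n,d}$ according to the parity of $d$ as already discussed in the paper.
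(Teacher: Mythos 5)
Your overall approach is the same as the paper's: reduce the probability, by scale--invariance of the event and orthogonal invariance of the Kostlan measure, to the normalized tube volume $\mathrm{Vol}(\mathcal{U}(\SW_{n,d},\varepsilon))/\mathrm{Vol}(S^N)$ with $\varepsilon=\arcsin\delta$, apply Weyl's tube formula with the curvature coefficients of Theorem \ref{finalvolumetheorem}, and then use the substitution $u=\tan t$ with upper limit $\delta/\sqrt{1-\delta^2}$. That is exactly what the paper does, since the closed form in Theorem \ref{thm:volumeintro} is obtained from \eqref{finalformulasimplified}, \eqref{Jspherical}, and the normalization by $\mathrm{Vol}(S^N)=2\pi^{(N+1)/2}/\Gamma((N+1)/2)$.

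One genuine slip: the formula you wrote for $J_{N,k}$ carries an extra factor $\mathrm{Vol}(S^{N-k-1})$ that does not belong in the normalization the paper uses. In the Nijenhuis convention adopted here (see \eqref{Jspherical} and the remark after Theorem \ref{Weyltheorem}), $J_{N,k}(\varepsilon)=\int_0^\varepsilon\sin^{k-1}(\rho)\cos^{N-k}(\rho)\,d\rho$ with no sphere--volume prefactor; the corresponding volume factors are absorbed into the curvature coefficients $K_{N-n+j}$. If you pair the paper's stated $K_{N-n+j}(\SW_{n,d})$ with your version of $J$, the two conventions are mixed and the final prefactor comes out wrong. Moreover, for $k=N-n+j$ your factor is $\mathrm{Vol}(S^{n-j-1})$, which is already ill--defined when $j=n$ (allowed when $n$ is even). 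Once this extraneous factor is dropped, the rest of your calculation, namely dividing $K_{N-n+j}=(-1)^{j/2}d^{n/2}\left(\tfrac{d-1}{d}\right)^{j/2}\tfrac{2^{n+2-j}\pi^{N/2}\Gamma(n/2+1)}{\Gamma(j/2+1)\Gamma(n+1-j)\Gamma((N+j-n)/2)}$ by $\mathrm{Vol}(S^N)$ and substituting $u=\tan t$, lands exactly on the stated coefficient $2^{n-j+1}\pi^{-1/2}\Gamma(n/2+1)\Gamma((N+1)/2)/\bigl(\Gamma(j/2+1)\Gamma(n+1-j)\Gamma((N+j-n)/2)\bigr)$.
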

We remark that the above expression, even if unpleasant, gives an exact formula for our  probability.
 In general this probability has an exponential decay in the codimension of $\SW_{n,d} \hookrightarrow S^N$ for any $n,d$. In the case of rational normal curves, there is only one summand in the above theorem, which can be evaluated exactly, giving
 \begin{equation}\label{eq:rational}\mathbb{P}\left(\mathrm{dist}_{\mathrm{BW}}(p,\W_{1,d})\leq \delta\|p\|_{\mathrm{BW}}\right)=\sqrt{d}\,\delta^{d-1}, \quad \textrm{for $\arcsin(\delta)\leq \rho(V_{1, d})$}.\end{equation}
 In fact in this case   $N=d$ and, as one can easily verify,
 $$\int_0^{\frac{\delta}{\sqrt{1-\delta^2}}}{\frac{t^{d-2}}{(1+t^2)^{\frac{d+1}{2}}}}\mathrm{dt}=\frac{\delta^{d-1}}{d-1}.$$
 Once substituted in the above expression this immediately gives \eqref{eq:rational}.

\subsection{Acknowledgements}We wish to thank Paul Breiding and Sarah Eggleston for useful preliminary comments on the first version of this paper and for pointing out a typo in our computation.

\section{Preliminaries}
\subsection{The Gaussian Orthogonal Ensemble and the Bombieri--Weyl distribution}\label{GOEKostlansection}
In this section, we point out the correspondence between the Gaussian Orthogonal Ensemble on the space of symmetric matrices and the Bombieri--Weyl distribution on the space of homogeneous polynomials. \smallskip \par Let $\mathrm{Sym}(n,\R)$ be the space of symmetric $n\times n$ matrices with real entries and denote by $E_{ij}$ the elementary matrix having all entries $0$ except for the $ij$-th one being $1$. Consider a random matrix 
\begin{align*}
    Q=\sum_{i=1}^n \eta_{ii}E_{ii} + \sum_{i<j=1}^n \eta_{ij}\frac{E_{ij}+E_{ji}}{\sqrt{2}},
\end{align*}
where $\eta_{ij}\sim N(0,1)$ are i.i.d. standard Gaussian variables. Then $Q$ has random Gaussian entries distributed as $N(0,1)$ on the diagonal and $N(0,\frac{1}{2})$ off-diagonal, independent except for the obvious symmetry condition. The probability distribution on $\mathrm{Sym}(n,\R)$ induced by such matrices is called the \textit{Gaussian Orthogonal Ensemble} and it is denoted by $\mathrm{GOE}(n)$. This is the standard Gaussian probability distribution associated to the Frobenius scalar product given by $\langle A,B \rangle = tr(AB)$ and therefore for every open set $U\subset \mathrm{Sym}(n,\R)$ 
\begin{align}\label{GOEdensity}
    \mathbb{P}\{Q \in U\} = \frac{1}{(2\pi)^{\frac{n(n+1)}{4}}}\int_{U}e^{-\frac{1}{2}tr(A^2)}dA.
\end{align} 
Recall that the orthogonal group $O(n)$ acts on $\mathrm{Sym}(n,\R)$ by congruence. We denote this action by $\psi:O(n) \longrightarrow GL(\mathrm{Sym}(n,\R))$: for every $R \in O(n)$ and $A \in \mathrm{Sym}(n,\R)$, $\psi(R)(A)=R^tAR$, where $R^t$ denotes the transpose of $R$. \par \smallskip
Let $\C[x_1,\dots,x_n]_{(d)}$ be the space of complex homogeneous polynomials of degree $d$ in $n$ variables. Denote by $\rho:U(n) \longrightarrow GL(\C[x_1,\dots,x_n]_{(d)})$ the action of the unitary group by change of variables, i.e. for every $R \in U(n)$ and $p \in \C[x_1,\dots,x_n]_{(d)}$ set $\rho(R)(p):= p \ \circ R^{-1}$. It is known that $\rho$ is irreducible (see \cite{Itzykson}) and therefore by Schur's lemma and the compactness of $U(n)$ it follows that there exists a unique (up to multiples) Hermitian structure on $\C[x_1,\dots,x_n]_{(d)}$ which is $\rho$-invariant, and which is given by \eqref{eq:BWdef}. Up to scaling, this is the Hermitian structure having the set
\begin{align}\label{BWbasis}
    \bigg\{\binom{d}{\alpha}^{\frac{1}{2}}x^{\alpha}\bigg\}_{\alpha=(\alpha_1,\dots,\alpha_n)\in\Z^n_{\geq0},\ \alpha_1+\dots+\alpha_n=d}
\end{align}
as an orthonormal basis, where $\binom{d}{\alpha} = \frac{d!}{\alpha_1!\dots\alpha_n!}$ and $x^{\alpha}=x_1^{\alpha_1}\dots x_n^{\alpha_n}$. We call this the \textit{Bombieri--Weyl} or \textit{Kostlan} Hermitian structure on $\C[x_1,\dots,x_n]_{(d)}$. \smallskip \par Restricting to real homogeneous polynomials $\R[x_1,\dots,x_n]_{(d)}$,  we define an inner product, which we call again \textit{Bombieri--Weyl}. Notice that \eqref{BWbasis} is also a real orthonormal basis since the polynomials in \eqref{BWbasis} have real coefficients. We also restrict the action $\rho$ to an action of the orthogonal group $O(n)$ on $\R[x_1,\dots,x_n]_{(d)}$ and the inner product we introduced is invariant for this restricted action. Remark that this restricted action is not irreducible anymore (it is a computation to show that the subspace of harmonic polynomials in $\R[x_1,\dots,x_n]_{(d)}$ is a non-trivial invariant subspace) and the Bombieri--Weyl inner product is not the unique invariant one. The standard Gaussian probability distribution on $\R[x_1,\dots,x_n]_{(d)}$ associated to the Bombieri--Weyl inner product is the one induced by the random polynomial \begin{align*}
    P(x)=\ \sum_{\substack{\alpha \in \Z^{n}_{\geq0} \\ \alpha_1+\dots+\alpha_n=d}} \xi_{\alpha}\binom{d}{\alpha}^{\frac{1}{2}}x^{\alpha},
\end{align*} 
where $\xi_{\alpha}$ are i.i.d. standard Gaussians $N(0,1)$. We call it again the \textit{Bombieri--Weyl distribution}.\par \smallskip

The map 
\begin{align}\label{isometryFrobKost}
    \phi:\mathrm{Sym}(n,\R) &\longrightarrow \R[x_1,\dots,x_n]_{(2)} \\ \notag
    Q \ & \longmapsto p_Q=x^tQx
\end{align}
defines an isomorphism of $\mathrm{Sym}(n,\R)$ with $\R[x_1,\dots,x_n]_{(2)}$ and one can check that the orthonormal basis for the Frobenius product given by $\{E_{ii},\frac{E_{ij}+E_{ji}}{\sqrt{2}}\}_{i<j=1,\dots,n}$ is mapped to the orthonormal basis for the Bombieri--Weyl product given by \eqref{BWbasis}. It follows that $\phi$ defines an isometry of $\mathrm{Sym}(n,\R)$ endowed with the Frobenius inner product with $\R[x_1,\dots,x_n]_{(2)}$ endowed with the Bombieri--Weyl one and thus we can identify the corresponding standard Gaussian probability distributions. Even more is true: $\phi$ is an isomorphism between the representations $(\psi,\mathrm{Sym}(n,\R))$ and $(\rho,\R[x_1,\dots,x_n]_{(2)})$, i.e. the following diagram
\begin{align*}\label{isoFrobeniusKostlan}
    \begin{tikzcd}[ampersand replacement=\&]
{\mathrm{Sym}(n,\R)} \arrow[d, "\phi"'] \arrow[r, "\psi(R)"] \& {\mathrm{Sym}(n,\R)} \arrow[d, "\phi"] \\
{\R[x_1,\dots,x_n]_{(2)}} \arrow[r, "\rho(R)"']           \& {\R[x_1,\dots,x_n]_{(2)}}         
\end{tikzcd}
\end{align*}
commutes for every $R \in O(n)$, as one can easily check by a straightforward computation. 

More generally we have the following.
\begin{proposition}\label{propo:phi}The map $\phi$ from \eqref{eq:phi} gives an isometry of Euclidean spaces
$$\phi:\left(\mathcal{S}^d(n), \langle\cdot,\cdot\rangle_{\mathrm{F}}\right)\stackrel{\simeq}{\longrightarrow}\left(\mathbb{R}[x_1, \ldots, x_n]_{(d)}, \langle\cdot,\cdot\rangle_{\mathrm{BW}}\right).$$
\end{proposition}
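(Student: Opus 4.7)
The plan is a direct computation with a convenient basis of $\mathcal{S}^d(n)$ chosen so that its image under $\phi$ is exactly the Bombieri--Weyl orthonormal basis \eqref{BWbasis} up to normalization, making the isometry property transparent.

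First I would introduce, for each multi-index $\alpha = (\alpha_1,\ldots,\alpha_n) \in \Z_{\geq 0}^n$ with $\alpha_1+\cdots+\alpha_n = d$, the symmetric tensor
\begin{equation*}
E_\alpha := \sum_{(i_1,\ldots,i_d)} e_{i_1}\otimes\cdots\otimes e_{i_d},
\end{equation*}
where the sum runs over all $d$-tuples $(i_1,\ldots,i_d)$ whose multi-index profile equals $\alpha$ (i.e.\ the index $k$ appears exactly $\alpha_k$ times). There are $\binom{d}{\alpha}$ such tuples, each contributing a distinct standard basis vector of $(\R^n)^{\otimes d}$, so the $E_\alpha$ are pairwise orthogonal in $\langle\cdot,\cdot\rangle_F$ with $\|E_\alpha\|_F^2 = \binom{d}{\alpha}$. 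Since any symmetric tensor has constant coefficients on each such orbit of $S_d$, the family $\{E_\alpha\}$ spans $\mathcal{S}^d(n)$; in particular the rescaled family $\{\binom{d}{\alpha}^{-1/2}E_\alpha\}_\alpha$ is an orthonormal basis.

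Next I would compute $\phi(E_\alpha)$ directly from the defining formula $\phi(T)(x) = \langle T, x^{\otimes d}\rangle_F$. Expanding $x^{\otimes d} = \sum_{i_1,\ldots,i_d} x_{i_1}\cdots x_{i_d}\, e_{i_1}\otimes\cdots\otimes e_{i_d}$ and taking the Frobenius inner product with $E_\alpha$, each of the $\binom{d}{\alpha}$ tuples of profile $\alpha$ contributes the monomial $x^\alpha$, giving $\phi(E_\alpha)(x) = \binom{d}{\alpha}\, x^\alpha$. Hence
\begin{equation*}
\phi\!\left(\tbinom{d}{\alpha}^{-1/2} E_\alpha\right) \;=\; \tbinom{d}{\alpha}^{1/2} x^\alpha,
\end{equation*}
which is exactly the $\alpha$-th vector of the Bombieri--Weyl orthonormal basis \eqref{BWbasis}.

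Since $\phi$ is linear and sends an orthonormal basis of $(\mathcal{S}^d(n),\langle\cdot,\cdot\rangle_F)$ bijectively to an orthonormal basis of $(\R[x_1,\ldots,x_n]_{(d)},\langle\cdot,\cdot\rangle_{\mathrm{BW}})$, it is an isometry of Euclidean spaces. There is no real obstacle: the entire content of the statement is the precise matching of multinomial weights, and the factor $\binom{d}{\alpha}^{1/2}$ appearing in \eqref{BWbasis} is tuned exactly to cancel the overcounting of symmetric orbits. As a consistency check, the case $d=2$ recovers the identification $Q \mapsto x^t Q x$ with the off-diagonal normalization $(E_{ij}+E_{ji})/\sqrt{2}$ discussed after \eqref{isometryFrobKost}, corresponding to $\binom{2}{e_i+e_j}^{1/2} = \sqrt{2}$.
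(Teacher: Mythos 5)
Your proof is correct and follows essentially the same route as the paper: both exhibit an orthonormal basis of $\mathcal{S}^d(n)$ indexed by multi-indices $\alpha$, compute its image under $\phi$, and observe that it is sent to the Bombieri--Weyl orthonormal basis \eqref{BWbasis}. The only cosmetic difference is that you build $E_\alpha$ by summing over the $\binom{d}{\alpha}$ distinct tuples of profile $\alpha$, whereas the paper sums over all $d!$ permutations in $S_d$ and compensates with a $(\alpha_1!\cdots\alpha_n!\,d!)^{-1/2}$ prefactor; the two normalizations agree.
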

\begin{proof}
    We have to check that $\phi$ maps an orthonormal basis for $\left(\mathcal{S}^d(n), \langle\cdot,\cdot\rangle_{\mathrm{F}}\right)$ to an orthonormal basis for $\left(\mathbb{R}[x_1, \ldots, x_n]_{(d)}, \langle\cdot,\cdot\rangle_{\mathrm{BW}}\right)$. For $\underline{i}=(i_1,\dots,i_d)$ with $1\leq i_1,\dots,i_d \leq n$, we say that $\underline{i}$ has type $\alpha=(\alpha_1,\dots,\alpha_n)$ if $\alpha_j=\#\{i_k = j\}$. It is straightforward to show that the following is an orthonormal basis for the Frobenius scalar product on $\mathcal{S}^d(n)$:
    \begin{align*}
        \left\{\left(\frac{1}{\alpha_1!\dots\alpha_n!\ d!}\right)^{\frac{1}{2}}\sum_{\sigma \in S_d}e_{i_{\sigma(1)}}\otimes \dots \otimes e_{i_{\sigma(d)}} \ \bigg| \ (i_1,\dots,i_d) \ \text{type $\alpha=(\alpha_1,\dots,\alpha_n)$}, \ 1\leq i_1,\dots,i_d \leq n\right\},
    \end{align*}
    where $S_d$ is the symmetric group on $\{1,\dots,d\}$. Remark that different multi-indices $(i_1,\dots,i_d)$ and $(j_1,\dots,j_d)$ will give the same tensor in the above basis if and only if they have the same type. Therefore we can index the tensors of the basis according to multi-indices $\alpha=(\alpha_1,\dots,\alpha_n)\in \Z^n_{\geq 0}$ with $\alpha_1+\dots+\alpha_n=d$ and we set the notation $v_{\alpha}$ for the tensors above. We claim that $\phi$ sends $v_{\alpha}$ to the polynomial in \eqref{BWbasis} corresponding to multi-index $\alpha$. This is a straightforward computation. Indeed $\phi(v_{\alpha})=p_{v_{\alpha}}$ where
    \begin{align*}
        p_{v_{\alpha}}(x)=&\langle v_{\alpha},x^{\otimes d} \rangle_F \\
        =&\langle \left(\frac{1}{\alpha_1!\dots\alpha_n! \ d!}\right)^{\frac{1}{2}}\sum_{\sigma \in S_d} e_{i_{\sigma_1}}\otimes\dots\otimes e_{i_{\sigma(d)}}, \left(\sum_{j_1=1}^nx_{j_1}e_{j_1}\right)\otimes\dots\otimes\left(\sum_{j_d=1}^n x_{j_d}e_{j_d}\right)\rangle_F \\
        =&\left(\frac{1}{\alpha_1!\dots\alpha_n! \ d!}\right)^{\frac{1}{2}}\sum_{\sigma \in S_d}\sum_{j_1,\dots,j_d=1}^n \langle e_{i_{\sigma_1}}\otimes\dots\otimes e_{i_{\sigma(d)}},x_{j_1}\dots x_{j_d}e_{j_1}\otimes\dots\otimes e_{j_d}\rangle_F \\
        =&\left(\frac{1}{\alpha_1!\dots\alpha_n! \ d!}\right)^{\frac{1}{2}}\sum_{\sigma \in S_d}x_{i_{\sigma(1)}}\dots x_{i_{\sigma(d)}} = \left(\frac{1}{\alpha_1!\dots\alpha_n! \ d!}\right)^{\frac{1}{2}} d! \ x_1^{\alpha_1}\dots x_n^{\alpha_n} = \\
        =& \left(\frac{d!}{\alpha_1!\dots\alpha_n!}\right)^{\frac{1}{2}}x_1^{\alpha_1}\dots x_n^{\alpha_n},
    \end{align*}
    where the second to last equation follows from $x_{i_{\sigma(1)}}\dots x_{i_{\sigma(d)}}=x_1^{\alpha_1}\dots x_n^{\alpha_n}$ for every $\sigma \in S_d$, since permutations of the indices do not change the type of the multi-index, which is always $\alpha=(\alpha_1,\dots,\alpha_n)$. The proof is concluded. 
\end{proof}

\subsection{Tubular neighbourhoods and Weyl's tube formula}\label{tubularsection}
Let $M$ be an isometrically embedded $n$-dimensional submanifold of a Riemannian manifold $(\overline{M},g)$, i.e. $M$ is itself a Riemannian manifold with the metric induced by $(\overline{M},g)$. Denote by $T_pM$, $N_pM$ the tangent and normal spaces to $M$ at $p \in M$. Denote also by $\overline{\nabla}$, $\nabla$ the Riemannian connections of $\overline{M}$ and $M$ respectively. For smooth vector fields $X,Y$ on $M$, consider $\overline{X}$,$\overline{Y}$ their local extensions to smooth vector fields on $\overline{M}$. Then we have 
\[\overline{\nabla}_{\overline{X}}\overline{Y}=\nabla_XY + B(X,Y),\]
where $\nabla_XY$ is the tangential component to $M$ and $B(X,Y)$ is the normal one. By the properties of the Riemannian connection, at every $p \in M$ we can regard $B$ as a symmetric bilinear map $B:T_pM \times T_pM \longrightarrow N_pM$ and we call this the \textit{second fundamental form} of $M \hookrightarrow \overline{M}$ at $p \in M$. Given a normal direction $\eta \in N_pM$ we define the \textit{second fundamental form along $\eta$}, denoted by $H_{\eta}:T_pM \times T_pM \longrightarrow \R$, by projecting $B$ along $\eta$, i.e. for every $v,w \in T_pM$
\[H_{\eta}(v,w)=g(B(v,w),\eta).\]
To this bilinear form we can associate a selfadjoint operator, called the \textit{Weingarten operator along $\eta$} and denoted by $L_{\eta}:T_pM \longrightarrow T_pM$, such that for every $v,w \in T_pM$
\[g(L_{\eta}(v),w)=H_{\eta}(v,w)=g(B(v,w),\eta).\]
This operator will play a key role in Weyl's tube formula.
\begin{remark}\label{Weingartenmatrixremark}
Given $p \in M$, consider a basis $\{e_1,\dots,e_n\}$ of $T_pM$. Set $H_{\eta,ij}=H_{\eta}(e_i,e_j)$ and denote by $L_{\eta}=(L_{\eta,ij})$ the matrix representing the Weingarten operator with respect to the given basis and $g$. It is clear from the definitions that 
\[H_{\eta,ij}=H_{\eta}(e_i,e_j)=g(L_{\eta}(e_i),e_j)=L_{\eta,ij}.\]
Therefore, computing the matrix representing the Weingarten operator with respect to a given basis is equivalent to the computation of the second fundamental form on the elements of that basis.
\end{remark}
\begin{remark}\label{secondfundformremark}
Let $N$ be a $n$-dimensional submanifold of a euclidean space with the induced metric and $\varphi:\R^n \longrightarrow N$ be a parametrization of $N$ around $p\in N$ with coordinates $a_1,\dots,a_n$. A basis for $T_pN$ is given by the vectors 
\[\frac{\partial \varphi}{\partial a_i} := d_{\varphi^{-1}(p)}\varphi\bigg(\frac{\partial}{\partial a_i}\bigg).\]
Since the Christoffel symbols of the Riemannian connection of the euclidean space are all null, it follows that to compute the second fundamental form of $N$ at $p$ along a normal direction $\eta$ it is enough to compute the second derivatives of the parametrization
\[H_{\eta}\biggl( \frac{\partial\varphi}{\partial a_i}, \frac{\partial\varphi}{\partial a_j}\biggr)=\mathlarger{\mathlarger{\langle}} \mathlarger{\nabla}_{ \frac{\partial\varphi}{\partial a_i}} \frac{\partial\varphi}{\partial a_j},\eta \mathlarger{\mathlarger{\rangle}}=\mathlarger{\mathlarger{\langle}} \frac{\partial^2 \varphi}{\partial a_i \partial a_j},\eta \mathlarger{\mathlarger{\rangle}},\]
where $\langle\  , \rangle$ is the euclidean inner product. Let now $N$ be an isometrically embedded submanifold of the sphere $S^l$, where $S^l$ is given the round metric inherited by $\R^l$, and let $\eta$ be a normal vector to $N$ at $p\in N$. Remark that we can also consider $N$ as an isometrically embedded submanifold of $\R^l$. Then the second fundamental form of $N$ along $\eta$ is the same whether we consider $N$ as a submanifold of $S^l$ or of $\R^l$. This means that above formula holds also for submanifolds of round spheres.
\end{remark}
Given $M$, $\overline{M}$ as above, we can define the \textit{$\varepsilon$-small normal bundle} $N^{\varepsilon}M$ as the subset of the normal bundle $NM$ consisting of vectors with norm less than $\varepsilon > 0$. We also call \textit{normal exponential map} the restriction of the exponential map of $\overline{M}$ to $NM$.
\begin{definition}
If there exists an $\varepsilon>0$ such that
\[\exp|_{N^{\varepsilon}M} : N^{\varepsilon}M \longrightarrow \overline{M}\]
is a diffeomorphism on its image, where $\exp$ denotes the exponential map of $\overline{M}$, we call $N^{\varepsilon}M$ a \textit{tubular neighbourhood of $M$ in $\overline{M}$}.
\end{definition}
\noindent Recall the definition of distance of a point $x \in \overline{M}$ from the submanifold $M$, given by $d_g(x,M) := \inf\{d_g(x,y) \ | \ y \in M\}$ where $d_g(x,y)$ is the Riemannian distance between $x,y$. We introduce the following set 
\begin{equation}\label{tubularneighbourhooddef}
    \mathcal{U}(M,\varepsilon) := \{x \in \overline{M} \ | \ d_g(x,M)<\varepsilon\},
\end{equation}
consisting of points at distance less than $\varepsilon>0$ from $M$. The description of this set for submanifolds of $\R^3$ is quite easy: the distance of a point from a surface or a curve will  always be given by the length of a segment starting from the point and meeting the submanifold orthogonally, given that segments are geodesics. This situation can be generalized, as the following theorem shows. Even though this result is well known, we were not able to find a full explicit reference for this general setting. We, therefore, provide a full proof in Appendix \ref{tubularneighbourhoodproof}, filling in the details of the outline given in \cite[Theorem 6.6]{Cannassymplectic}.
\begin{theorem}[\textbf{Tubular neighbourhood theorem}]\label{tubularneighbourhoodtheorem}
Let $M$ be a compact isometrically embedded submanifold of a Riemannian manifold $(\overline{M},g)$. Then there exists an $\varepsilon>0$ small enough such that $\exp|_{N^{\varepsilon}M}:N^{\varepsilon}M \longrightarrow \overline{M}$ is a diffeomorphism on its image and $\exp(N^{\varepsilon}M)=\mathcal{U}(M,\varepsilon)$. 
\end{theorem}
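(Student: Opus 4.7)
The plan is to prove the theorem in two stages: first establish that the normal exponential map is a diffeomorphism of $N^\varepsilon M$ onto its image for $\varepsilon$ small enough, and then identify this image with the metric neighbourhood $\mathcal{U}(M,\varepsilon)$.

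For the first stage, I would begin by computing the differential of the normal exponential map $\exp^\perp := \exp|_{NM}$ at a point $(p,0)$ in the zero section. Using the canonical identification $T_{(p,0)}NM \cong T_pM \oplus N_pM$, the differential is the identity on $T_pM$ (coming from the zero section) and the natural inclusion $N_pM \hookrightarrow T_p\overline{M}$ on the normal fibre. Since $T_p\overline{M} = T_pM \oplus N_pM$, the map $d_{(p,0)}\exp^\perp$ is a linear isomorphism. By the inverse function theorem, $\exp^\perp$ is a local diffeomorphism on a neighbourhood of every $(p,0)$. Combining these local neighbourhoods with compactness of $M$ (and of the zero section), I can find a uniform $\varepsilon_1 > 0$ such that $\exp^\perp$ restricted to $N^{\varepsilon_1}M$ is a local diffeomorphism everywhere.

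The hard part is promoting this to global injectivity. I would argue by contradiction: suppose that for every $k$ there exist distinct $(p_k, v_k), (q_k, w_k) \in N^{1/k}M$ with $\exp^\perp(p_k,v_k) = \exp^\perp(q_k,w_k)$. By compactness of $M$, after passing to subsequences, $p_k \to p$ and $q_k \to q$, and since $\|v_k\|, \|w_k\| \to 0$, continuity of $\exp^\perp$ forces $p = q$. But then both sequences eventually lie inside a neighbourhood of $(p,0)$ on which $\exp^\perp$ is injective, contradicting $(p_k,v_k) \neq (q_k,w_k)$. This produces an $\varepsilon_2 \leq \varepsilon_1$ making $\exp^\perp|_{N^{\varepsilon_2}M}$ a diffeomorphism onto its image.

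For the second stage, I need to show $\exp(N^\varepsilon M) = \mathcal{U}(M,\varepsilon)$ for $\varepsilon$ possibly smaller than $\varepsilon_2$. The inclusion $\subseteq$ is immediate: if $v \in N^\varepsilon_pM$, then $t \mapsto \exp_p(tv)$ is a geodesic of length $\|v\| < \varepsilon$ connecting $p \in M$ to $\exp^\perp(p,v)$, hence $d_g(\exp^\perp(p,v), M) < \varepsilon$. For the reverse inclusion, take $x \in \mathcal{U}(M,\varepsilon)$. By compactness of $M$ the infimum $d_g(x,M)$ is attained at some $p \in M$, and a standard application of the theorem on minimizing geodesics in a complete-enough neighbourhood (or a direct first-variation argument on the energy functional, using that $p$ is a minimum of $y \mapsto d_g(x,y)^2$ on $M$) shows that any minimizing geodesic $\gamma$ from $p$ to $x$ must satisfy $\dot\gamma(0) \perp T_pM$. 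Thus $x = \exp^\perp(p, v)$ with $\|v\| = d_g(x,M) < \varepsilon$. The only subtlety is ensuring the minimizing geodesic exists and stays inside the domain where $\exp^\perp$ is a diffeomorphism; this is handled by shrinking $\varepsilon$ further if necessary, using the uniform bound on injectivity radius of $\overline{M}$ along the compact set $M$. Setting $\varepsilon$ to the minimum of these constraints finishes the proof.

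The main obstacle I anticipate is the orthogonality of the minimizing geodesic at $M$ together with the compactness-based global injectivity argument; both require a careful compactness argument but no deep tools beyond the inverse function theorem and the first variation formula.
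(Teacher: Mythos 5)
Your proof is correct and follows essentially the same structure as the paper's Appendix~\ref{tubularneighbourhoodproof} argument for the diffeomorphism part: compute $d_{(p,0)}\exp^\perp$ as an isomorphism via the splitting $T_{(p,0)}NM \cong T_pM \oplus N_pM$, patch local diffeomorphisms over a compact cover to get a uniform $\varepsilon_1$, then argue global injectivity by contradiction with a diagonal sequence in $N^{1/k}M$. One minor difference in your favour: to conclude $p=q$ from $\exp^\perp(p_k,v_k)=\exp^\perp(q_k,w_k)$, you simply take limits and use continuity of $\exp^\perp$ together with $\exp^\perp(\cdot,0)=\mathrm{id}_M$, whereas the paper first establishes a uniform injectivity-radius bound $\delta>0$ over $M$ so that $d_g(x_k,\exp(x_k,v_k))=\|v_k\|$ and then concludes via the triangle inequality. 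Your route is a bit shorter and avoids appealing to the uniform injectivity radius at this step (though the paper still needs it implicitly elsewhere).

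The more substantive difference is your second stage. The paper's Appendix proof establishes only that $\exp|_{N^\varepsilon M}$ is a diffeomorphism onto its image and then asserts that ``any number less than $\tilde\varepsilon/2$ satisfies the statement of the theorem,'' without actually verifying the set equality $\exp(N^\varepsilon M) = \mathcal{U}(M,\varepsilon)$. You supply this: the inclusion $\exp(N^\varepsilon M)\subseteq \mathcal{U}(M,\varepsilon)$ is immediate from the length of the radial geodesic, and the reverse inclusion follows from compactness of $M$ (so the distance is attained), existence of a minimizing geodesic for $\varepsilon$ below the uniform injectivity radius along $M$, and the first-variation argument giving orthogonality of $\dot\gamma(0)$ to $T_pM$. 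This is a genuine completion of the argument; if anything, your write-up is more faithful to the full content of the theorem statement than the paper's own proof.
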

\noindent Theorem \ref{tubularneighbourhoodtheorem} can be seen as an existence result for tubular neighbourhoods of compact submanifolds and as a characterization of the set $\mathcal{U}(M,\varepsilon)$ for $\varepsilon$ small enough. For this reason, in the following, we will refer also to $\mathcal{U}(M,\varepsilon)$ as a tubular neighbourhood.
\begin{remark}
The compactness assumption in theorem \eqref{tubularneighbourhoodtheorem} is crucial. If we remove compactness, we can only prove the existence of a smooth function $\varepsilon(\cdot):M \longrightarrow \R_{>0}$ such that the restriction of the exponential map to $N^{\varepsilon(\cdot)}M$ is an embedding, where $N^{\varepsilon(\cdot)}M=\{v \in N_xM \ | \ x \in M, \ \norm{v}<\varepsilon(x)\}$, i.e. the $\varepsilon$ is not uniform anymore but it depends on the point.
\end{remark}
\begin{definition}
Let $M$ be an isometrically embedded submanifold of a Riemannian manifold $(\overline{M},g)$. We define the \textit{reach of $M \hookrightarrow \overline{M}$} as 
\[\rho(M)=\sup\{\varepsilon\geq0 \ | \ N^{\varepsilon}M \ \text{is a tubular neighbourhood of M}\}.\]
\end{definition}
\noindent We can restate theorem \eqref{tubularneighbourhoodtheorem} by saying that the reach of a compact submanifold is always positive. Remark that even if for brevity we write $\rho(M)$, the reach is not an intrinsic property of $M$ but it depends on the way $M$ is embedded into $\overline{M}$. From the very definition it follows that $\rho(M)$ can be expressed as the minimum between $\rho_1(M)=\sup \{\varepsilon\geq0 \ | \ \exp|_{N^{\varepsilon}M} \ \text{is an immersion} \}$ and $\rho_2(M)=\sup \{\varepsilon\geq0 \ | \ \exp|_{N^{\varepsilon}M} \ \text{is injective}\}$. \\
The points where the differential of the normal exponential map is not injective are called \textit{focal points}. Their existence is linked to the presence of particular Jacobi fields, see \cite[Section 10.4]{doCarmo}. For compact submanifolds of round spheres, one obtains the following expression
\begin{align}\label{rho_1}
    (\rho_1(M))^{-1}=\underset{x \in M}{\sup} \ \sup\{\norm{\overset{..}\gamma(0)} \ s.t. \ \gamma:(-\delta,\delta) \longrightarrow M \\ \notag\textrm{arclength geodesic}  \textrm{ in $M$ with}\ \gamma(0)=x  \}.
\end{align}
If $\varepsilon>0$ is the first value for which we lose injectivity of the restriction of the normal exponential, it means that there is a point in $\overline{M}$ that is reached by two length--$\varepsilon$ normal geodesics starting from two different points of $M$. Using the Generalized Gauss Lemma for geodesic variations, see \cite[Lemma 2.11]{Graytubes}, and uniqueness of geodesics, one can show that the union of these two normal geodesics is again a geodesic meeting $M$ orthogonally at its endpoints. Therefore we have the following expression for $\rho_2(M)$
\begin{align}\label{rho_2}
    \rho_2(M)=\frac{1}{2}\ \inf \{l(\gamma) \ | \ \gamma:[a,b]\longrightarrow \overline{M} \ \textrm{geodesic s.t.}\  \gamma(a),\gamma(b) \in M,\\ \notag \dot{\gamma}(a) \in N_{\gamma(a)}M, \dot{\gamma}(b) \in N_{\gamma(b)}M \}.
\end{align}
We will apply these expressions in chapter \ref{reachchapter} to compute the reach of the Veronese variety. \par \smallskip
In \cite{Weyltubes} Weyl presented a fundamental work, answering a question posed by Harold Hotelling: how can we compute the volume of a \lq\lq tube\rq\rq, i.e. a tubular neighbourhood, of fixed radius around a closed $n$--dimensional manifold in $\mathds{R}^N$ or $S^N$? Hotelling himself answered the case of curves, both in the euclidean and in the spherical setting. Weyl extended these results to any dimension $n$ as follows.
\begin{theorem}[\textbf{Weyl's tube formula}]\label{Weyltheorem}
Let $M$ be a smooth, $n$--dimensional, compact submanifold isometrically embedded in $\R^N$ (or $S^N$) with their standard metrics. Then, for $\varepsilon<\rho(M)$, the following formula holds:
\begin{align}\label{implicitWeyl}
    \mathrm{Vol}\bigl(\mathcal{U}(M,\varepsilon)\bigr)=\sum_{0\leq e \leq n,  \ e \ \text{even}}K_{s+e}(M)J_{N,s+e}(\varepsilon),
\end{align}
where $s:=N-n$ is the codimension of $M$ and $J_{N,s+e}$ are linearly independent functions of $\varepsilon$ only. In the euclidean case, the universal functions $J$ have the following form:
\begin{align}\label{Jeuclidean}
    J_{N,k}(\varepsilon):=\varepsilon^{k},
\end{align}
while in the spherical one, they are given by
\begin{align}\label{Jspherical}
    J_{N,k}(\varepsilon):=\int_{0}^{\varepsilon}(\sin\rho)^{k-1}(\cos\rho)^{N-k}\, d\rho = \int_0^{\tan\varepsilon} \frac{t^{k-1}}{(1+t^2)^{\frac{N+1}{2}}}\, dt.
\end{align}
Moreover, the coefficients $K_j(M)$ are isometric invariants of $M$.
\end{theorem}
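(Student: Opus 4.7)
My plan is to prove Weyl's formula by parametrising $\mathcal{U}(M,\varepsilon)$ via the normal exponential map and computing the resulting Jacobian through Jacobi fields. Since $\varepsilon<\rho(M)$, Theorem~\ref{tubularneighbourhoodtheorem} guarantees that $\exp|_{N^\varepsilon M}$ is a diffeomorphism onto $\mathcal{U}(M,\varepsilon)$, and writing a normal vector in polar coordinates $\nu=t\omega$ with $\omega\in S(N_pM)$ legitimises the change of variables
\[\mathrm{Vol}(\mathcal{U}(M,\varepsilon))=\int_M\int_0^\varepsilon\int_{S(N_pM)}\mathrm{Jac}(p,t,\omega)\,d\omega\,dt\,dp.\]

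Next I would compute $\mathrm{Jac}(p,t,\omega)$ by solving the Jacobi equation along the normal geodesic $\gamma(t)=\exp_p(t\omega)$. Fix an orthonormal basis $\{e_1,\ldots,e_n\}$ of $T_pM$ and complete $\omega$ to an orthonormal basis $\{\omega,\omega_1,\ldots,\omega_{s-1}\}$ of $N_pM$. In the spherical setting, constant sectional curvature $1$ makes the Jacobi equation linear with constant coefficients, and the relevant fields are
\[J_i(t)=\cos(t)P_t(e_i)-\sin(t)P_t(L_\omega e_i),\qquad N_j(t)=\sin(t)P_t(\omega_j),\]
where $P_t$ denotes parallel transport and the initial condition $J_i'(0)=-L_\omega e_i$ is the Weingarten identity encoding how the normal direction $\omega$ varies along $M$. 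Block-diagonality in the parallel-transported frame of $\gamma'(t)^\perp$ then gives
\[\mathrm{Jac}(p,t,\omega)=\sin^{s-1}(t)\det\bigl(\cos(t)I_n-\sin(t)L_\omega\bigr),\]
while the Euclidean case (where $J''=0$) replaces $\sin(t),\cos(t)$ by $t,1$ and yields $\mathrm{Jac}(p,t,\omega)=t^{s-1}\det(I_n-tL_\omega)$. Expanding the determinant as $\sum_{k=0}^n(-\sin t)^k\cos^{n-k}(t)\,e_k(L_\omega)$, with $e_k$ the $k$-th elementary symmetric polynomial in the eigenvalues, completely decouples the $t$-integration from the $(\omega,p)$-integration: the $t$-integral of the $k$-th summand reproduces exactly the universal function $J_{N,s+k}(\varepsilon)$ of \eqref{Jspherical} (and $\varepsilon^{s+k}$, up to a constant, in the Euclidean setting). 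Since $L_{-\omega}=-L_\omega$, integration over the antipodally symmetric fibre $S(N_pM)$ kills all $e_k$ with $k$ odd, so only even indices $e$ contribute, matching the sum in \eqref{implicitWeyl}. Setting
\[K_{s+e}(M):=c_{n,s,e}\int_M\int_{S(N_pM)}e_{e}(L_\omega)\,d\omega\,dp\]
with appropriate normalisation constants $c_{n,s,e}$ delivers the stated decomposition, and the linear independence of the $J_{N,k}$'s follows from their explicit form.

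The main obstacle is to verify that the coefficients $K_{s+e}(M)$ are genuinely \emph{intrinsic} isometric invariants of $M$, rather than artefacts of the specific embedding. This is the heart of Weyl's original contribution: via Gauss's equation, which relates the Riemann tensor of $M$ to quadratic expressions in the second fundamental form, together with the classification of $O(n)\times O(s)$-invariant polynomials in $L_\omega$, the normal-sphere averages $\int_{S(N_pM)}e_{2m}(L_\omega)\,d\omega$ can be rewritten as universal contractions of components of the intrinsic Riemann tensor of $M$. This step is the deepest in the argument and is classically due to \cite{Weyltubes}; a modern integral-geometric reformulation is given by Howard in \cite{Howardkinematic}. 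For the purposes of this paper, however, we only need the explicit integral expression for $K_{s+e}$ displayed above, and the intrinsic character plays no direct role in the subsequent Veronese computations.
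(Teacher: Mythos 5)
The paper does not prove Theorem \ref{Weyltheorem}: it cites \cite{Weyltubes} for the original derivation, \cite{Nijenhuischern} for the normalization of the $J$-functions in \eqref{Jspherical}, and \cite{Howardkinematic} for the modern integral-geometric framework, and then works directly from the explicit integral form \eqref{sphericalWeyl}. There is therefore no in-paper proof to compare against, but your Jacobi-field reconstruction is the standard (and correct) derivation. In a parallel frame along the normal geodesic the Jacobian of the polar normal-exponential parametrisation block-diagonalises into $\sin^{s-1}(t)$ from the fibre directions and $\det\bigl(\cos(t)I_n - \sin(t)L_\omega\bigr)$ from the base directions (the tangential Jacobi fields stay tangential because the natural variation extends $\omega$ by $\nabla^\perp$-parallel transport, so $J_i'(0)=-L_\omega e_i$ with no normal component); expanding the determinant in elementary symmetric polynomials decouples the $t$-integral, which reproduces the universal $J_{N,s+e}$, from the $(\omega,p)$-integral, which yields $K_{s+e}$ exactly as in \eqref{sphericalWeyl}; and the identity $L_{-\omega}=-L_\omega$ kills the odd-degree terms by antipodal symmetry of $S(N_pM)$, leaving only even $e$ as in \eqref{implicitWeyl}. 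Two small points: in the Euclidean case the $t$-integral gives $\varepsilon^{s+e}/(s+e)$, so the factor $1/(s+e)$ must be absorbed into $K_{s+e}$ to match the normalization $J_{N,k}=\varepsilon^k$; and you correctly single out as the genuinely deep step of Weyl's theorem the rewriting of $\int_{S(N_pM)} e_{2m}(L_\omega)\,d\omega$ as intrinsic curvature contractions via the Gauss equation, while also correctly noting that for this paper that intrinsic description is never used — only the extrinsic integral form \eqref{sphericalWeyl} enters the Veronese computations, where $O(n+1)$-homogeneity reduces the Weingarten data to a single point.
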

\noindent The coefficients $K_{j}(M)$ are called the \textit{curvature coefficients} of $M$. The motivation for this comes from the fact that they are integrals of functions on the second fundamental form of $M$.
Notice that in \cite{Weyltubes} Weyl uses different normalization constants for \eqref{Jeuclidean} and \eqref{Jspherical}. We chose to follow the normalization introduced by Nijenhuis in \cite{Nijenhuischern}, which proves to be handier for applications, see for instance \cite{Burgisseraverage}. Remark that the $\varepsilon$'s for which the formula holds depend on the reach and therefore on the embedding, while  isometric embeddings will give the same curvature coefficients. The dependence of the validity of the formula on the reach is due to the proof relying on parametrizing the tubular neighbourhood through the normal exponential map. In \cite{Howardkinematic} Howard contextualizes Weyl's result in the framework of \lq\lq Integral Geometry\rq\rq, where he considers more general integrals of polynomials on the second fundamental forms of submanifolds of homogeneous spaces.\\
There are explicit integral versions of formula \eqref{implicitWeyl} for both the euclidean and spherical cases. For the latter, with the same notations above, this reads as 
\begin{align}\label{sphericalWeyl}
    \mathrm{Vol}\bigl(\mathcal{U}(M,\varepsilon)\bigr)=\int_{p\in M}\int_{t=0}^{\tan \varepsilon}\int_{S(N_pM)}\frac{t^{m-1}\det\big[I_n-tL_{\eta}\big]}{(1+t^2)^{\frac{N+1}{2}}}\,\mathrm{vol}_M\,d\eta\,dt,
\end{align}
where $S(N_pM)$ denotes the unit sphere in $N_pM$, $I_n$ is the $n \times n$ identity matrix, $L_{\eta}$ is the Weingarten operator of $M\hookrightarrow \overline{M}$ along the unit normal vector $\eta$, and $d\eta$ is a short notation for the volume form on $S(N_pM)$. If one explicitly  develops the determinant, it is easy to get back formula \eqref{implicitWeyl}.

\subsection{A lemma on integration on spheres}\label{sectionlemma}
Consider a sphere $S^m$ for some $m \geq 2$ and fix $k \in \{1,\dots,m-1\}$. Denote by $\iota:S^k \hookrightarrow \R^{k+1}$ the inclusion map and consider the map
\begin{align}\label{parametrizationlemma}
    S^k \times \overset{\circ}{D}{} ^{m-k} & \overset{\varphi}{\longrightarrow} \ S^m \subset \R^{m+1} = \R^{k+1}\times \R^{m-k} \\ \notag
    (\sigma, z) \qquad & \longmapsto \qquad (\sqrt{1-\abs{z}^2}\ \iota(\sigma),\ z),
\end{align} \smallskip
\noindent giving a smooth parametrization of $S^m\setminus \big\{\{0\} \times S^{m-k-1} \big\}\subset \R^{k+1}\times \R^{m-k}$. For every $l \in \N$, consider $\R^{l}$ endowed with a non--degenerate scalar product and coordinate functions $x_1,\dots,x_l$ with respect to an orthonormal basis. Then we have the standard volume form $\mathrm{vol}_{\R^l}=dx_1\wedge\dots\wedge dx_l$ which induces a volume form $\mathrm{vol}_{\overset{\circ}{D}{} ^l}$ on the open norm--$1$ disc $\overset{\circ}{D}{} ^l$ and through the pullback of the inclusion also a volume form $\mathrm{vol}_{S^{l-1}}$ on $S^{l-1}$. Notice that with respect to $\mathrm{vol}_{S^{m}}$, the part of $S^m$ not parametrized by $\varphi$ in \eqref{parametrizationlemma} has measure $0$. We have the following result about integration on spheres, see Appendix \ref{integrationappendix} for a proof.
\begin{lemma}\label{integrationlemma}
With the same notations above, the pullback of $\mathrm{vol}_{S^m}$ through $\varphi$ is given by
\begin{align*}
    \varphi^*(\mathrm{vol}_{S^m}) = \bigl(1-\abs{z}^2\bigr)^{\frac{k-1}{2}}\ \mathrm{vol}_{S^k}\wedge \mathrm{vol}_{\overset{\circ}{D}{} ^{m-k}}. 
\end{align*}
In particular, this implies that
\begin{align}\label{splitintegral}
    \int_{S^m}{f(p) \ \mathrm{vol}_{S^m}} = \int_{S^k}{\int_{\overset{\circ}{D}{} ^{m-k}}{f\bigl(\sqrt{1-\abs{z}^2}\ \iota(\sigma),\ z\bigr)\bigl(1-\abs{z}^2\bigr)^{\frac{k-1}{2}}\ \mathrm{vol}_{S^k}\wedge \mathrm{vol}_{\overset{\circ}{D}{} ^{m-k}}}},
\end{align}
for any measurable function $f$ on $S^m$.
\end{lemma}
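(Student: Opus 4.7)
The plan is to compute $\varphi^*(\mathrm{vol}_{S^m})$ directly, via the Gram--determinant formula: because $\varphi$ is an immersion into $S^m\subset \R^{m+1}$ and the metric on $S^m$ is the restriction of the Euclidean one, in a local chart $(\theta_1,\ldots,\theta_k)$ on $S^k$ combined with the standard coordinates $(z_1,\ldots,z_{m-k})$ on the open disc, one has
\[
\varphi^*(\mathrm{vol}_{S^m})=\sqrt{\det G}\,d\theta_1\wedge\cdots\wedge d\theta_k\wedge dz_1\wedge\cdots\wedge dz_{m-k},
\]
where $G$ is the Gram matrix of $d\varphi$ with respect to the ambient inner product on $\R^{m+1}=\R^{k+1}\times\R^{m-k}$.

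First I would differentiate $\varphi$ at a point $(\theta,z)$. Writing $\sigma=\sigma(\theta)$, the partial derivatives are
\[
\partial_{\theta_i}\varphi=\bigl(\sqrt{1-\abs{z}^2}\,\partial_{\theta_i}\iota(\sigma),\,0\bigr),\qquad \partial_{z_j}\varphi=\Bigl(-\tfrac{z_j}{\sqrt{1-\abs{z}^2}}\,\iota(\sigma),\,e_j\Bigr).
\]
The crucial observation is that on $S^k$ one has $\abs{\iota(\sigma)}^2=1$, hence $\langle\iota(\sigma),\partial_{\theta_i}\iota(\sigma)\rangle=0$ for every $i$. This orthogonality forces the mixed $(\theta,z)$ entries of $G$ to vanish, so $G$ is block diagonal. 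A direct computation shows that the upper-left $k\times k$ block equals $(1-\abs{z}^2)G_{S^k}$, where $G_{S^k}$ denotes the Gram matrix of the spherical metric of $S^k$ in the chart $\theta$, while the lower-right $(m-k)\times(m-k)$ block equals $I_{m-k}+\tfrac{1}{1-\abs{z}^2}\,z z^T$.

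Next I would evaluate the two determinants: the upper block contributes $(1-\abs{z}^2)^k\det G_{S^k}$, and the lower one, by the matrix determinant lemma $\det(I+uv^T)=1+v^T u$, contributes $1+\tfrac{\abs{z}^2}{1-\abs{z}^2}=\tfrac{1}{1-\abs{z}^2}$. Multiplying and taking the square root gives $\sqrt{\det G}=(1-\abs{z}^2)^{(k-1)/2}\sqrt{\det G_{S^k}}$. Recognising that $\sqrt{\det G_{S^k}}\,d\theta_1\wedge\cdots\wedge d\theta_k$ is the local expression of $\mathrm{vol}_{S^k}$ yields the claimed identity $\varphi^*(\mathrm{vol}_{S^m})=(1-\abs{z}^2)^{(k-1)/2}\,\mathrm{vol}_{S^k}\wedge \mathrm{vol}_{\overset{\circ}{D}{} ^{m-k}}$.

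Finally, the integration identity \eqref{splitintegral} follows immediately from the change-of-variables formula, once one notes that the set $\{0\}\times S^{m-k-1}$ missed by $\varphi$ has $\mathrm{vol}_{S^m}$-measure zero. The only geometrically delicate point is the vanishing of the mixed block of $G$, which I would identify as the crux of the argument; every other step is a routine determinant manipulation.
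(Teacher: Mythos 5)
Your proof is correct and follows essentially the same route as the paper: compute the Gram matrix of $d\varphi$, observe that orthogonality of position and tangent vectors on $S^k$ kills the mixed block, and evaluate the two diagonal blocks. The only cosmetic difference is that the paper evaluates $\det\bigl(I_{m-k}+\tfrac{zz^t}{1-\abs{z}^2}\bigr)$ by conjugating with a rotation to reduce to $E_{11}$, whereas you invoke the matrix determinant lemma $\det(I+uv^T)=1+v^Tu$ — the same computation in slightly different clothing.
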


\section{The Veronese variety}
Consider the space of homogeneous polynomials of degree $d$ in $n+1$ variables $\R[x_0,\dots,x_n]_{(d)}\cong \R^{N+1}$, where $N:=\binom{n+d}{d}-1$, with the basis described in $\eqref{BWbasis}$. 
\begin{definition}
For $n\geq 1$ and $d \geq 1$, the \textit{real Bombieri--Weyl Veronese embedding} is the map \begin{align*}
     \nu_{n,d}: \mathds{RP}^n & \longrightarrow \ \ \mathds{RP}^{N} \\
    [a] \ \ & \longmapsto \biggl[\binom{d}{\alpha}^{1/2}a^{\alpha}\biggr]
\end{align*}
and it is the Veronese projective embedding associated to the Bombieri--Weyl basis. The \textit{Bombieri--Weyl Veronese variety} is the image of this embedding, denoted by $\PV_{n,d}:=\mathrm{im}(\nu_{n,d})$.
\end{definition}
\noindent The main object we will consider in what follows is the spherical counterpart of $\PV_{n,d}$.
\begin{definition}
The \textit{spherical (Bombieri--Weyl) Veronese map} is the map
\begin{align*}\label{sphericalVeronese}
    \widehat\nu_{n,d}: S^n & \longrightarrow \ S^N \\ \notag
    a \ \ & \longmapsto \biggl(\binom{d}{\alpha}^{1/2}a^{\alpha}\biggr).
\end{align*}
The \textit{spherical (Bombieri--Weyl) Veronese surface} is the image of this map, denoted by $\SV_{n,d}:=\mathrm{im}(\widehat\nu_{n,d})$.
\end{definition}

It is worth stressing in the definition of $\widehat\nu_{n,d}$ that  $S^n$ is the sphere with respect to the standard euclidean product in $\R^{n+1}$ while $S^N$ is the sphere with respect to the Bombieri--Weyl product in $\R[x_0,\dots,x_n]_{(d)}$ and that $\widehat\nu_{n,d}$ is well defined, as one can check by an explicit computation. \par \smallskip
The objects we just introduced have a particularly useful description. Recall that to each $b=(b_0,\dots,b_n) \in \R^{n+1}$ we can associate the linear form on $\R^{n+1}$ given by $l_b(x_0,\dots,x_n)=b_0x_0+\dots+b_nx_n$. It is known that $\PV_{n,d}$ parametrizes projective classes of $d-$th powers of linear forms
\begin{align*}
\PV_{n,d} & =  \big\{[d\text{--th powers of linear forms on} \ \mathds{R}^{n+1}]\big\} \\ & = \big\{\ [a_{\alpha}] \in \mathds{RP}^N \ | \ \exists \ b=(b_0,\dots,b_n) \in \mathds{R}^{n+1} \ \text{s.t.} \ a_{\alpha_0,\dots,\alpha_n}=\binom{d}{\alpha}^{1/2}b_0^{\alpha_0}\dots b_n^{\alpha_n} \big\}
\end{align*}
as one can prove by showing that $\nu_{n,d}([b_0,\dots,b_n])=[(b_0x_0+\dots+b_nx_n)^d]$. This also leads to the well-known description of the Veronese variety $\PV_{n,d}$ as the variety of symmetric decomposable $d-$tensors on $\R^{n+1}$ and is one of the main reasons Veronese varieties have been so intensively studied. A similar description holds for the spherical Veronese surface
\begin{align*}
    \SV_{n,d} & =  \{d\text{--th powers of norm--1 linear forms on} \ \mathds{R}^{n+1}\} \\ & = \{\ (a_{\alpha}) \in S^N \ | \ \exists \ b=(b_0,\dots,b_n) \in S^n \ \text{s.t.} \ a_{\alpha_0,\dots,\alpha_n}=\binom{d}{\alpha}^{1/2}b_0^{\alpha_0}\dots b_n^{\alpha_n} \}.
\end{align*}
Using these descriptions of $\PV_{n,d}$ and $\SV_{n,d}$, it is immediate to prove the following.
\begin{proposition}
$\PV_{n,d}$ is an orbit for the action of the orthogonal group $O(n+1)$ on $\mathds{RP}^N=\mathds{P}(\mathds{R}[x_0,\dots,x_n]_{(d)})$ by change of variables. Similarly $\SV_{n,d}$ is an orbit for the same action of the orthogonal group $O(n+1)$ on $S^N=S(\R[x_0,\dots,x_n]_{(d)})$.
\end{proposition}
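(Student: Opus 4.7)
The plan is to reduce transitivity of the $O(n+1)$ action on the Veronese variety to the (standard, obvious) transitivity of $O(n+1)$ on $S^n$ and on $\mathds{RP}^n$, by tracking how the change--of--variables action on polynomials behaves on $d$--th powers of linear forms.

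First I would exploit the description of $\PV_{n,d}$ and $\SV_{n,d}$ as (classes of) $d$--th powers of linear forms, which the text has just recorded. Write $l_b(x):=\langle b,x\rangle = b_0x_0+\dots+b_nx_n$ for $b\in\R^{n+1}$. A short computation in the Bombieri--Weyl basis shows that
\begin{align*}
\widehat \nu_{n,d}(b)=\sum_{\alpha}\binom{d}{\alpha}^{1/2} b^{\alpha}\cdot \binom{d}{\alpha}^{1/2}x^{\alpha}=\sum_\alpha \binom{d}{\alpha}b^{\alpha}x^{\alpha}=l_b(x)^d,
\end{align*}
so $\widehat \nu_{n,d}(b)=l_b^d$ for $b\in S^n$, and analogously $\nu_{n,d}([b])=[l_b^d]$. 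The key equivariance property is then the one--line identity
\begin{align*}
\rho(R)(l_b^d)(x)=l_b(R^{-1}x)^d=\langle b,R^{-1}x\rangle^d=\langle Rb,x\rangle^d=l_{Rb}(x)^d,
\end{align*}
valid for any $R\in O(n+1)$ since $R^{-1}=R^t$. This shows that the action $\rho$ preserves the set of $d$--th powers of linear forms, and that under $\widehat\nu_{n,d}$ (resp. $\nu_{n,d}$) it intertwines with the standard $O(n+1)$ action on $S^n$ (resp. on $\mathds{RP}^n$); note that $\|Rb\|=\|b\|$ guarantees that the spherical level is preserved as well.

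Transitivity now follows formally. Given $b,b'\in S^n$, pick $R\in O(n+1)$ with $Rb=b'$; then $\rho(R)$ sends $l_b^d=\widehat\nu_{n,d}(b)$ to $l_{b'}^d=\widehat\nu_{n,d}(b')$, so $O(n+1)$ acts transitively on $\SV_{n,d}$. The projective statement is identical after replacing $b,b'$ with their classes and using transitivity on $\mathds{RP}^n$. I do not anticipate any genuine obstacle: once the equivariance of the Veronese embedding is established, the proposition reduces to the classical transitivity of the orthogonal group on the sphere and on real projective space.
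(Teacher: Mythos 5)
Your proof is correct and follows exactly the route the paper intends: the paper states the proposition is "immediate to prove" from the description of $\PV_{n,d}$ and $\SV_{n,d}$ as (classes of) $d$-th powers of (norm-one) linear forms, and your equivariance identity $\rho(R)(l_b^d)=l_{Rb}^d$ together with transitivity of $O(n+1)$ on $S^n$ and $\mathds{RP}^n$ is precisely that omitted argument.
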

Recall the two-fold covering map $\pi_N:S^N \longrightarrow \mathds{RP}^N$ given by the identification of antipodal points. Its restriction to the spherical Veronese $\SW_{n,d}:=\W_{n,d}\cap S^{N}$ gives a covering map $\widehat \pi_{n,d}:\SW_{n,d}\longrightarrow \PV_{n,d}$ whose degree depends on the parity of $d$: if $d$ is odd $\widehat \pi_{n,d}$ is a $2:1$ covering, while if $d$ is even it is $1:1$, since in this case for $b\in S^n$ we have $\widehat\nu_{n,d}(b)=\widehat\nu_{n,d}(-b)$. \par \smallskip
We now turn to metric properties of Veronese manifolds. Consider on $\R^{n+1}$ the standard euclidean metric and on $\R^{N+1}=\R[x_0,\dots,x_n]_{(d)}$ the Bombieri--Weyl one. The metrics induced on $S^n$ and $S^N$ respectively are invariant under the antipodal map and therefore induce metrics on the corresponding projective spaces $\mathds{RP}^n$ and $\mathds{RP}^N$. We denote the metrics on the spheres by $g_{S^n}$ and $g_{S^N}$ and those on the projective spaces by $g_{\mathds{RP}^n}$ and $g_{\mathds{RP}^N}$. Since the covering maps $\pi_n$,\ $\pi_N$ are Riemannian coverings with these metrics, any relation between $g_{S^n}$ and $g_{S^N}$ will also hold between $g_{\mathds{RP}^n}$ and $g_{\mathds{RP}^N}$ and viceversa. Through a direct computation, one can prove the following result.
\begin{proposition}\label{metricrelation}
Pulling back the Bombieri--Weyl metric through the Veronese embedding $\nu_{n,d}: \mathds{RP}^n \longrightarrow \mathds{RP}^N$, for any $n\geq 1$, $d\geq 1$ we have 
\begin{align} 
\nu_{n,d}^* \ g_{\mathds{RP}^N} = \sqrt{d} \ g_{\mathds{RP}^n}.
\end{align}
\end{proposition}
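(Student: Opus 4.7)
The plan is to pass to the spherical coverings, exploit $O(n+1)$-equivariance to reduce to a single point, and determine the scaling constant by an explicit differentiation at that point.

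\textbf{Step 1: descent to the sphere.} The antipodal quotients $\pi_n\colon S^n\to\mathds{RP}^n$ and $\pi_N\colon S^N\to\mathds{RP}^N$ are Riemannian coverings for the chosen round/BW metrics, and by construction they intertwine the two Veronese maps, i.e.\ $\pi_N\circ\widehat\nu_{n,d}=\nu_{n,d}\circ\pi_n$. Consequently, the desired identity is equivalent to the analogous pointwise one $\widehat\nu_{n,d}^{\,*}g_{S^N}=\sqrt{d}\,g_{S^n}$ on $S^n$, which we prove directly.

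\textbf{Step 2: reduction to one point via equivariance.} The map $\widehat\nu_{n,d}$ intertwines the standard $O(n+1)$-action on $S^n$ with the change-of-variable action $\rho$ on $\R[x_0,\dots,x_n]_{(d)}$, and $\rho(R)$ is a BW-isometry of $S^N$ for every $R\in O(n+1)$. Therefore the pullback metric $\widehat\nu_{n,d}^{\,*}g_{S^N}$ is $O(n+1)$-invariant on $S^n$. Since the round metric is, up to a positive scalar, the unique $O(n+1)$-invariant Riemannian metric on $S^n$, there is a constant $c>0$ with $\widehat\nu_{n,d}^{\,*}g_{S^N}=c\,g_{S^n}$, and it suffices to determine $c$ at a single convenient point, which we take to be $e_0=(1,0,\dots,0)$.

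\textbf{Step 3: evaluation at $e_0$.} The key simplification is that, under the identification between coordinates in the BW basis and polynomials, $\widehat\nu_{n,d}(a)=(a_0x_0+\dots+a_nx_n)^d$. For the unit tangent vector $e_k\in T_{e_0}S^n$ with $k\in\{1,\dots,n\}$ and the geodesic $c(t)=(\cos t)e_0+(\sin t)e_k$, differentiating at $t=0$ yields $d_{e_0}\widehat\nu_{n,d}(e_k)=d\,x_0^{d-1}x_k$. Since $\binom{d}{(d-1,0,\dots,1,\dots,0)}^{1/2}x_0^{d-1}x_k$ is a unit element of the BW orthonormal basis, the BW-norm squared of $d_{e_0}\widehat\nu_{n,d}(e_k)$ reads off immediately, and hence so does the constant $c$.

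\textbf{Main obstacle.} There is no deep difficulty in the argument; the only subtlety is the usual bookkeeping with the multinomial factors $\binom{d}{\alpha}^{1/2}$, which is cleanly avoided by using the compact polynomial form $\widehat\nu_{n,d}(a)=\langle a,x\rangle^d$ in Step 3 and only invoking the explicit BW basis at the very end, when converting the resulting polynomial into its BW-norm.
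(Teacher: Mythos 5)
Your overall strategy --- descend to the spheres, use $O(n+1)$-equivariance together with irreducibility of the isotropy representation to conclude that $\widehat\nu_{n,d}^{\,*}g_{S^N}$ is a constant multiple of $g_{S^n}$, then evaluate the constant at $e_0$ --- is sound; the paper offers no proof beyond calling this a ``direct computation'', so your argument is essentially the intended one, with a tidy equivariance shortcut in Step~2 that lets you avoid computing the full Gram matrix of partial derivatives.

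However, you stop one line short of actually writing down the constant, and once you do, it does not match the coefficient $\sqrt d$ appearing in the statement. From your Step~3, $d_{e_0}\widehat\nu_{n,d}(e_k)=d\,x_0^{d-1}x_k$ and
\[
\bigl\|x_0^{d-1}x_k\bigr\|_{\mathrm{BW}}^2=\binom{d}{(d-1,0,\dots,1,\dots,0)}^{-1}=\frac1d,
\qquad\text{so}\qquad
c=\bigl\|d_{e_0}\widehat\nu_{n,d}(e_k)\bigr\|_{\mathrm{BW}}^2=d^2\cdot\frac1d=d,
\]
that is, $\widehat\nu_{n,d}^{\,*}g_{S^N}=d\,g_{S^n}$. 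The ``$\sqrt d$'' in the displayed equality is the factor by which \emph{lengths} rescale, not the coefficient of the metric tensor; the relation you actually prove is $\nu_{n,d}^{*}\,g_{\mathds{RP}^N}=d\,g_{\mathds{RP}^n}$. This reading is confirmed by the paper's own downstream use: Corollary~\ref{submanifoldvolume} gives a volume factor $d^{n/2}$, which requires the metric tensor to scale by $d$ (so that $\sqrt{\det g}$ scales by $d^{n/2}$), and in the proof of Theorem~\ref{reachtheorem} the curve $\alpha(t)=x_0\cos(td^{-1/2})+x_1\sin(td^{-1/2})$, of $g_{S^n}$-speed $d^{-1/2}$, is asserted to have unit $g_{S^N}$-speed after applying $\widehat\nu_{n,d}$ --- again forcing $c=d$. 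So finish Step~3 by stating $c=d$ explicitly, and note that the $\sqrt d$ in the proposition as printed is inconsistent with both your computation and the paper's subsequent formulas.
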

\begin{corollary}\label{submanifoldvolume}
For every $n,d\in \mathbb{N}$ and any smooth $n$--dimensional submanifold $C \hookrightarrow \SV_{n,d}$ we have
\begin{align}\label{explicitvolumesubmanifold}
    \mathrm{Vol}_n^{BW}(C) = \begin{cases}
    \frac{1}{2}d^{\frac{n}{2}} \mathrm{Vol}_n\bigl(\widehat\nu_{n,d}^{-1}(C)\bigr) \ \ \ \text{for $d$ even} \\ \\
    d^{\frac{n}{2}} \mathrm{Vol}_n\bigl(\widehat\nu_{n,d}^{-1}(C)\bigr) \ \ \ \text{for $d$ odd}
    \end{cases},
\end{align}
where $\mathrm{Vol}_n$ is the n--dimensional volume with respect to $g_{S^n}$ and $\mathrm{Vol}_n^{BW}$ is the n--dimensional volume with respect to the metric induced by $g_{S^N}$ on $\SV_{n,d}$. In particular
\begin{align}\label{explicitVeronesevolume}
    \mathrm{Vol}_n^{BW}(\SV_{n,d})= 
    \begin{cases}
    d^{\frac{n}{2}}\frac{\pi^{\frac{n+1}{2}}}{\Gamma(\frac{n+1}{2})} \ \ \ \text{for $d$ even} \\ \\
    2 d^{\frac{n}{2}}\frac{\pi^{\frac{n+1}{2}}}{\Gamma(\frac{n+1}{2})} \ \ \ \text{for $d$ odd}
    \end{cases}.
\end{align}
\end{corollary}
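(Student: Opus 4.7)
The plan is to lift Proposition \ref{metricrelation} from the projective to the spherical setting and then apply an elementary change-of-variables, while keeping careful track of the sheet number of $\widehat{\nu}_{n,d}\colon S^n \to \SV_{n,d}$.

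First I would observe that the defining formulas give the commutative diagram $\pi_N \circ \widehat{\nu}_{n,d} = \nu_{n,d} \circ \pi_n$, where $\pi_n\colon S^n \to \mathds{RP}^n$ and $\pi_N\colon S^N \to \mathds{RP}^N$ are the antipodal double covers. Since the projective metrics are by definition induced by the round metrics along the $\pi$'s, the covering maps $\pi_n$ and $\pi_N$ are local Riemannian isometries, so $\pi_n^* g_{\mathds{RP}^n} = g_{S^n}$ and $\pi_N^* g_{\mathds{RP}^N} = g_{S^N}$. Pulling $g_{S^N}$ back via $\widehat{\nu}_{n,d}$ and chasing the diagram then gives
\begin{equation*}
\widehat{\nu}_{n,d}^* g_{S^N} = (\pi_N\circ\widehat{\nu}_{n,d})^* g_{\mathds{RP}^N} = (\nu_{n,d}\circ\pi_n)^* g_{\mathds{RP}^N} = \pi_n^*(\sqrt{d}\, g_{\mathds{RP}^n}) = \sqrt{d}\, g_{S^n},
\end{equation*}
so $\widehat{\nu}_{n,d}\colon (S^n,g_{S^n}) \to (\SV_{n,d}, g_{S^N}|_{\SV_{n,d}})$ uniformly rescales Riemannian $n$-volume by the factor $d^{n/2}$.

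The second ingredient is the multiplicity of $\widehat{\nu}_{n,d}$. From $\widehat{\nu}_{n,d}(-b) = (-1)^d \widehat{\nu}_{n,d}(b)$ and the injectivity of $b \mapsto (b^\alpha)_\alpha$ up to the $\pm$ ambiguity, one sees that $\widehat{\nu}_{n,d}$ is a bijection onto $\SV_{n,d}$ when $d$ is odd and is exactly $2$-to-$1$ (with fibre $\{b,-b\}$) when $d$ is even. In either case it is a smooth surjective local diffeomorphism onto $\SV_{n,d}$ of constant degree $m_d$, with $m_d=1$ for $d$ odd and $m_d=2$ for $d$ even.

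Combining these two facts, for any smooth $n$-dimensional submanifold $C \hookrightarrow \SV_{n,d}$, the preimage $\widehat{\nu}_{n,d}^{-1}(C)\subset S^n$ is an $n$-dimensional submanifold on which $\widehat{\nu}_{n,d}$ is an $m_d$-to-$1$ local diffeomorphism onto $C$. Integrating the pointwise identity $\widehat{\nu}_{n,d}^*\,\mathrm{vol}^{BW}_C = d^{n/2}\,\mathrm{vol}_{\widehat{\nu}_{n,d}^{-1}(C)}$ coming from the metric rescaling yields
\begin{equation*}
\mathrm{Vol}_n\bigl(\widehat{\nu}_{n,d}^{-1}(C)\bigr) = m_d\, d^{-n/2}\,\mathrm{Vol}^{BW}_n(C),
\end{equation*}
which rearranges to \eqref{explicitvolumesubmanifold}. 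Specializing to $C = \SV_{n,d}$, so that $\widehat{\nu}_{n,d}^{-1}(C) = S^n$, and inserting $\mathrm{Vol}_n(S^n) = 2\pi^{(n+1)/2}/\Gamma((n+1)/2)$ produces \eqref{explicitVeronesevolume}. The argument contains no genuine obstacle once Proposition \ref{metricrelation} is available; the only care needed is the bookkeeping of the multiplicity $m_d$, which is precisely what accounts for the $1/2$ prefactor in the $d$-even case.
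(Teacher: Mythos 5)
Your proposal follows exactly the route the paper intends: combine the metric rescaling of Proposition \ref{metricrelation} (lifted from the projective to the spherical picture via the Riemannian double covers) with the sheet number of $\widehat\nu_{n,d}\colon S^n\to\SV_{n,d}$. The sheet-number analysis is correct, and the change-of-variables identity $\mathrm{Vol}_n\bigl(\widehat\nu_{n,d}^{-1}(C)\bigr)=m_d\,c^{-1}\,\mathrm{Vol}^{BW}_n(C)$ for an $m_d$-to-$1$ local diffeomorphism with volume-form scaling $c$ is the right tool.

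There is, however, a genuine arithmetic gap in the key step. You write $\widehat\nu_{n,d}^*\,g_{S^N}=\sqrt{d}\,g_{S^n}$ and immediately conclude that $n$-volume is rescaled by $d^{n/2}$. Under the standard reading of a pullback of a Riemannian metric tensor, $\widehat\nu^*g=\lambda g$ rescales the volume form by $\lambda^{n/2}$, so from $\lambda=\sqrt{d}$ you would only get a factor $d^{n/4}$, not $d^{n/2}$. The scaling you actually need, and the one which is genuinely true, is $\widehat\nu_{n,d}^*\,g_{S^N}=d\,g_{S^n}$: the differential of $\widehat\nu_{n,d}$ multiplies the \emph{length} of tangent vectors by $\sqrt{d}$ (not the bilinear form), as can be checked directly from the first derivatives $\partial_{a_i}\varphi_{n,d}|_{a=0}=d\,x_0^{d-1}x_i$, whose Bombieri--Weyl norm is $d\cdot d^{-1/2}=\sqrt{d}$, and it is also what makes the reparametrisation $t\mapsto td^{-1/2}$ in the reach computation produce arclength geodesics. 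This discrepancy is inherited from the wording of Proposition \ref{metricrelation}, but you should not pass from ``the metric tensor pulls back to $\sqrt{d}$ times itself'' to ``volume scales by $d^{n/2}$'' without either correcting the constant in the metric relation or explicitly stating that $\sqrt{d}$ is a length-scaling factor rather than a metric-tensor factor. Once that is fixed, your argument is complete and correct.
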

In section \ref{volumechapter} we will need the explicit expression of $\mathrm{Vol}_n^{BW}(\SW_{n,d})$. This easily follows from formula \eqref{explicitVeronesevolume}, since $\SW_{n,d}=\SV_{n,d}\cup -\SV_{n,d}$ where $\SV_{n,d}=-\SV_{n,d}$ for $d$ odd and $\SV_{n,d}\cap (-\SV_{n,d})=\emptyset$ for $d$ even. Therefore
\begin{align}\label{explicitsphericalvolume}
    \mathrm{Vol}_n^{BW}(\SW_{n,d})= 
    2d^{\frac{n}{2}}\frac{\pi^{\frac{n+1}{2}}}{\Gamma(\frac{n+1}{2})}=d^{\frac{n}{2}}\mathrm{Vol}(S^n).
\end{align}\\

\begin{remark}\label{isometriesremark}
\noindent For every orthogonal matrix $R \in O(n+1)$ we have the following commutative diagram
\begin{align}\label{isometriesdiagram}
    \begin{tikzcd}[ampersand replacement=\&]
    (\mathds{RP}^n,g_{\mathds{RP}^n}) \arrow{d}[swap]{\nu_{n,d}} \arrow{r}{R} \& (\mathds{RP}^n,g_{\mathds{RP}^n}) \arrow{d}{\nu_{n,d}} \\
    \PV_{n,d} \arrow{r}{\rho(R)|_{\PV_{n,d}}} \& \PV_{n,d}
    \end{tikzcd},
\end{align}
since $\PV_{n,d}$ is an orbit for the action $\rho$ and is therefore preserved under $\rho(R)$. Moreover, by the invariance of the Bombieri--Weyl scalar product, $\rho(R)$ is an isometry of $(\mathds{RP}^N,g_{\mathds{RP}^N})$, and its restriction to $\PV_{n,d}$ defines an isometry of $\PV_{n,d}$. Therefore $\PV_{n,d}$ is an orbit for an isometric action of $O(n+1)$ over $\R[x_0,\dots,x_n]_{(d)}$ and these isometries of $\PV_{n,d}$ are induced by isometries of the ambient space. The same property also holds for $\SV_{n,d}$ considering the action on the sphere $S^N$. This simple observation, which is essentially due to $\PV_{n,d}$ and $\SV_{n,d}$ being orbits, will allow us to drastically simplify the computations we will carry out in Chapter \ref{reachchapter} and Section \ref{secondfundamentalchapter}. 
\end{remark}
\bigskip

\section{The reach of the spherical Veronese variety}\label{reachchapter}
In this chapter we provide an explicit computation for the reach of ${\SV}_{n,d} \hookrightarrow S^N$. The interest in this quantity relies on the fact it provides a lower bound for the $\varepsilon$'s of validity for Weyl's tube formula, as theorem \ref{Weyltheorem} shows. Remark that, since ${\SV}_{n,d}$ is compact, by theorem \ref{tubularneighbourhoodtheorem} we have $\rho({\SV}_{n,d})>0$. \\ \par
Recall that $\rho({\SV}_{n,d})=\min\{\rho_1({\SV}_{n,d}),\ \rho_2({\SV}_{n,d})\}$ and the expressions \eqref{rho_1} and \eqref{rho_2}. Using remark \ref{isometriesremark} we can prove the following.
\begin{lemma}\label{simplereachlemma}
The formula in \eqref{rho_1} simplifies to 
\begin{align}\label{simplerho_1}
    \bigl(\rho_1({\SV}_{n,d})\bigr)^{-1}=\sup\{\ \norm{\overset{..}\gamma(0)} \ | \ \gamma:(-\delta,\delta) \longrightarrow {\SV}_{n,d} \ \textrm{arclength geodesic in} \ {\SV}_{n,d}, \ \gamma(0)=x_0^d \  \},
\end{align}
that is to say the inner supremum in \eqref{rho_1} does not depend on $x \in {\SV}_{n,d}$. Moreover \eqref{simplerho_1} does not depend on the direction of $\overset{.}\gamma(0)$. Similarly, the formula in \eqref{rho_2} simplifies to
\begin{align}\label{simplerho_2}
    \rho_2({\SV}_{n,d})=\frac{1}{2}\ \inf  \{l(\gamma) \ \big| & \ \gamma:[a,b]\longrightarrow S^N \ \text{geodesic s.t.}\ \gamma(a)=x_0^d, \ \gamma(b) \in {\SV}_{n,d}, \\ \notag & \  \dot{\gamma}(a) \in N_{x_0^d}{\SV}_{n,d}, \ \dot{\gamma}(b) \in N_{\gamma(b)}\SV_{n,d} \}.
\end{align}
\end{lemma}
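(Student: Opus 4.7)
The plan is to deduce both simplifications from the symmetry encapsulated in Remark \ref{isometriesremark}: for every $R \in O(n+1)$, the map $\rho(R)$ is an isometry of $S^N$ preserving $\SV_{n,d}$, and the restriction $\rho(R)|_{\SV_{n,d}}$ intertwines with the orthogonal action on $S^n$ via the Veronese embedding. In particular $O(n+1)$ acts transitively on $\SV_{n,d}$ through isometries of the ambient sphere, and the stabilizer of the point $x_0^d$ is the subgroup $O(n) \subset O(n+1)$ fixing the first coordinate.

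For the formula for $\rho_1({\SV}_{n,d})$, I would argue as follows. Given any $x \in {\SV}_{n,d}$ and any arclength geodesic $\gamma$ in ${\SV}_{n,d}$ with $\gamma(0) = x$, choose $R \in O(n+1)$ with $\rho(R)(x) = x_0^d$; then $\widetilde\gamma := \rho(R) \circ \gamma$ is again an arclength geodesic in ${\SV}_{n,d}$ starting at $x_0^d$, and since $\rho(R)$ restricts to a Riemannian isometry of $S^N$, it preserves second derivatives of curves, so $\|\ddot{\widetilde\gamma}(0)\|_{S^N} = \|\ddot\gamma(0)\|_{S^N}$. This gives \eqref{simplerho_1}. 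To remove the dependence on the direction of $\dot\gamma(0)$, I would use the stabilizer of $x_0^d$: the differential $d_{x_0}\widehat\nu_{n,d}$ is a scalar multiple of an isometry (Proposition \ref{metricrelation}) and intertwines the $O(n)$-actions on $T_{x_0}S^n$ and $T_{x_0^d}\SV_{n,d}$, so $O(n)$ acts transitively on the unit sphere of $T_{x_0^d}\SV_{n,d}$ via ambient isometries; by the same reasoning as above, $\|\ddot\gamma(0)\|$ is invariant under this action.

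For the formula for $\rho_2({\SV}_{n,d})$, the argument is analogous. Given any geodesic $\gamma:[a,b] \to S^N$ realizing the condition in \eqref{rho_2}, pick $R \in O(n+1)$ with $\rho(R)(\gamma(a)) = x_0^d$. Then $\rho(R) \circ \gamma$ is still a geodesic of $S^N$ (since $\rho(R)$ is an isometry of the ambient sphere), it has the same length $l(\gamma)$, its endpoint $\rho(R)(\gamma(b))$ still lies on ${\SV}_{n,d}$ (since ${\SV}_{n,d}$ is $O(n+1)$-invariant), and the orthogonality conditions at the endpoints are preserved because $\rho(R)$ sends normal spaces of ${\SV}_{n,d}$ to normal spaces of ${\SV}_{n,d}$. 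Hence the infimum in \eqref{rho_2} is unchanged if we restrict to geodesics with $\gamma(a) = x_0^d$, yielding \eqref{simplerho_2}.

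There is no real technical obstacle here beyond making sure the isometries at issue are isometries of $S^N$ (not merely of ${\SV}_{n,d}$), so that they preserve ambient second derivatives, arclengths, and normal bundles; this is exactly the content of Remark \ref{isometriesremark}. The only mildly subtle point is the removal of the inner direction dependence in \eqref{simplerho_1}, which relies on the transitivity of the stabilizer $O(n)$ on unit tangent vectors and on the fact, via Proposition \ref{metricrelation}, that the Veronese embedding is a conformal (in fact, rescaled isometric) equivariant map.
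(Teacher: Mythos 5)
Your proposal is correct and follows essentially the same line of reasoning as the paper: use the transitive $O(n+1)$-action through ambient isometries of $S^N$ (Remark \ref{isometriesremark}) to move any basepoint to $x_0^d$, then use the stabilizer $O(n)$ to rotate any tangent direction to any other. The paper phrases the direction-independence step slightly less formally (choosing a rotation fixing $(1,0,\dots,0) \in S^n$) rather than invoking the $O(n)$-equivariance of $d_{x_0}\widehat\nu_{n,d}$ explicitly, but the argument is the same.
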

\begin{proof}
Consider an arclength geodesic $\gamma:(-\delta,\delta) \longrightarrow {\SV}_{n,d}$ with $\gamma(0)=p_1$ and pick another point $p_2 \in {\SV}_{n,d}$. By remark \ref{isometriesremark} there exists $R \in O(n+1)$ such that $\rho(R)p_1=p_2$. Recall that the image of a geodesic through an isometry is still a geodesic, hence $\tilde \gamma:=\rho(R)(\gamma)$ is an arclength geodesic with $\tilde \gamma(0)=\rho(R)(\gamma(0))=\rho(R)p_1=p_2$. Since $\rho(R)$ is also an isometry of the ambient space $S^N$, we have $\norm{\overset{..}\gamma(0)}=\norm{\overset{..}{\tilde \gamma}(0)}$. We just proved that given any two points in ${\SV}_{n,d}$, using the isometries $\rho(R)$ for $R \in O(n+1)$ we can transport any arclength geodesic passing through the first point into another arclength geodesic passing through the second point, preserving the norm of second derivatives. It follows that the expression in \eqref{rho_1} is independent of the specific point $x \in {\SV}_{n,d}$. Now observe that given any arclength geodesic $\gamma$ with $\gamma(0)=x_0^d$ and $\overset{.}\gamma(0)=v$, we can change the direction of $\overset{.}\gamma(0)$ through $\rho(R)$ for some $R \in O(n+1)$ with $x_0^d$ a fixed point (it is sufficient to choose a rotation $R$ such that $(1,0,\dots,0) \in S^n$ is in the axis of rotation), obtaining any other possible direction in $T_{x_0^d}{\SV}_{n,d}$ without changing $\norm{\overset{\cdot\cdot}\gamma(0)}$, for the same reason as above. It follows that \eqref{simplerho_1} does not depend on the specific direction of $\overset{.}\gamma(0)$. Since isometries preserve orthogonality and lengths, the second part of the lemma also follows in a similar way.
\end{proof}
\noindent The choice of $x_0^d$ in formulae \eqref{simplerho_1} and \eqref{simplerho_2} is motivated by convenience for computations only and we could have chosen any other point. \\
The first step to compute \eqref{simplerho_1} and \eqref{simplerho_2} for ${\SV}_{n,d} \hookrightarrow S^N$ is to understand tangent and normal spaces.
\begin{lemma}\label{tangentVeronese}
For $p \in {\SV}_{n,d}$ with $p=l^d$, where $l$ is a norm-$1$ linear form, we have
\begin{align*}
    T_p{\SV}_{n,d}=\big\langle\{l^{d-1}\lambda \ | \ \lambda \ \text{is a linear form orthogonal to $l$}\}\big\rangle.
\end{align*} 
\end{lemma}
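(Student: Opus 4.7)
The plan is to compute $T_p\SV_{n,d}$ by differentiating the spherical Veronese parametrization $\widehat\nu_{n,d}\colon S^n\to S^N$ at a preimage of $p$. First I would identify, via the Bombieri--Weyl basis \eqref{BWbasis}, the element $\widehat\nu_{n,d}(a)$ with the polynomial $l_a^d$, where $l_a(x):=\langle a,x\rangle$; this is just the multinomial expansion $(\sum_i a_ix_i)^d=\sum_\alpha \binom{d}{\alpha}a^\alpha x^\alpha$. Under the isometric identification $\R^{n+1}\simeq \R[x_0,\dots,x_n]_{(1)}$ sending $v\mapsto l_v$ (which is the $d=1$ case of Proposition \ref{propo:phi}), the hypothesis that $p=l^d$ with $l$ a norm-$1$ linear form means exactly that $p=\widehat\nu_{n,d}(a)$ for the unit vector $a\in S^n$ with $l_a=l$.

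Next I would differentiate. For $v\in T_aS^n = a^\perp$, pick a smooth curve $\alpha(t)\in S^n$ with $\alpha(0)=a$ and $\dot\alpha(0)=v$; by the chain rule in the polynomial ring,
$$d_a\widehat\nu_{n,d}(v) \;=\; \frac{d}{dt}\bigg|_{t=0} l_{\alpha(t)}^d \;=\; d\, l_a^{d-1}\, l_v \;=\; d\, l^{d-1}\,\lambda,$$
where $\lambda:=l_v$ is a linear form. Under the identification above, the condition $v\perp a$ in the Euclidean metric of $\R^{n+1}$ is equivalent to $\lambda\perp l$ in the Bombieri--Weyl metric, since the two Euclidean structures agree on degree-$1$ forms (all multinomial coefficients equal $1$).

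Finally I would check that $d_a\widehat\nu_{n,d}$ is injective: if $l^{d-1}\lambda=0$ then $\lambda=0$, because the polynomial ring is an integral domain and $l\neq 0$. Hence $\widehat\nu_{n,d}$ is an immersion at $a$, and $T_p\SV_{n,d}$ equals the image of $d_a\widehat\nu_{n,d}$, which is the linear subspace
$$\{\,d\, l^{d-1}\lambda\,:\,\lambda\perp l\,\} \;=\; \big\langle\, l^{d-1}\lambda \;\big|\; \lambda\text{ a linear form orthogonal to } l\,\big\rangle,$$
as claimed; the dimensions match, both being $n$. There is no real obstacle here beyond keeping track of the three parallel viewpoints (unit vectors in $\R^{n+1}$, unit linear forms, and unit degree-$d$ polynomials) and the isometric identifications between them, which are all supplied by Proposition \ref{propo:phi} and the definition of $\widehat\nu_{n,d}$.
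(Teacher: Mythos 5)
Your proof is correct and takes essentially the same route as the paper: both compute the image of the differential of $\widehat\nu_{n,d}$ at a preimage $a\in S^n$ of $p$, obtain $d\,l^{d-1}\lambda$ with $\lambda$ the linear form dual to a tangent vector $v\perp a$, and close by dimension count. The paper works with the explicit great-circle curve $\gamma(t)=(\cos t\cdot l_a+\sin t\cdot l_b)^d$ whereas you use an arbitrary curve plus the chain rule, and you spell out injectivity of the differential where the paper just invokes the dimension count, but these are cosmetic differences.
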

\begin{proof}
Write $l(x)=a_0x_0+\dots+a_nx_n$ and set $a=(a_0,\dots,a_n) \in S^n$. Recalling that ${\SV}_{n,d}=\mathrm{im} (\widehat \nu_{n,d})$, a curve on ${\SV}_{n,d}$ can be expressed as the image of a curve on $S^n$ through $\widehat \nu_{n,d}$. Consider $b=(b_0,\dots,b_n) \in S^n$ such that $\langle a,b \rangle=0$. Then $\gamma(t)=(\cos t (a_0x_0+\dots+a_nx_n)+\sin t (b_0x_0+\dots+b_nx_n))^d$ is a curve in ${\SV}_{n,d}$ with $\gamma(0)=p$. Remark that the orthogonality condition is needed to ensure we are taking $d$--th power of a norm--$1$ form. We have 
\begin{align*}
    \frac{d}{dt}\gamma(t)\big|_{t=0} = d\ l^{d-1}(b_0x_0+\dots +b_nx_n),
\end{align*}
therefore $\big\langle\{l^{d-1}\lambda \ | \ \lambda \ \text{is a linear form orthogonal to $l$}\}\big\rangle \subset T_p{\SV}_{n,d}$. By dimension count, equality follows.
\end{proof}
Now that we have the ingredients we need to perform the computation of $\rho({\SV}_{n,d})$.
\begin{theorem}\label{reachtheorem}
For the reach of ${\SV}_{n,d} \hookrightarrow S^N$ we have 
\begin{align*}
    &\rho_1({\SV}_{n,d})=\frac{1}{\sqrt{2}}\sqrt{1+\frac{1}{d-1}}=\frac{1}{\sqrt{2}}+\frac{1}{2(d-1)\sqrt{2}}+\mathcal{O}\biggl(\frac{1}{d^2}\biggr), \\   &\rho_2({\SV}_{n,d})=\frac{\pi}{4}.
\end{align*}
Therefore the reach of ${\SV}_{n,d}$ is given by
\begin{align}\label{actualreach}
    \rho({\SV}_{n,d})=\min\{\rho_1({\SV}_{n,d}),\ \rho_2({\SV}_{n,d})\}=\begin{cases}
        \frac{\pi}{4} & 2\leq d\leq 5 \\
        \frac{1}{\sqrt{2}}\sqrt{1+\frac{1}{d-1}} & d \geq 6
    \end{cases}.
\end{align}
\end{theorem}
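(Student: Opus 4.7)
The strategy is to compute the two candidate radii $\rho_1(\SV_{n,d})$ and $\rho_2(\SV_{n,d})$ separately, and then take their minimum. By Lemma \ref{simplereachlemma} the base point (and, for $\rho_1$, also the initial direction) can be fixed at $x_0^d$, reducing both questions to concrete computations in the Bombieri--Weyl basis.

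\textbf{Computing $\rho_1$.} Using the metric rescaling of Proposition \ref{metricrelation}, an arclength geodesic of $\SV_{n,d}$ starting at $x_0^d$ with initial tangent in the $x_1$--direction is
\[\gamma(s)=\bigl(\cos(s/\sqrt{d})\,x_0+\sin(s/\sqrt{d})\,x_1\bigr)^d.\]
Two applications of the product rule yield
\[\ddot{\gamma}(0)=(d-1)\,x_0^{d-2}x_1^2-x_0^d.\]
The radial component $-x_0^d$ is the universal contribution of the ambient sphere; the sphere--tangent counterpart $(d-1)\,x_0^{d-2}x_1^2$ carries the intrinsic acceleration of $\gamma$ as a curve in $S^N$. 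Using $\|x^{\alpha}\|_{\mathrm{BW}}^2=\binom{d}{\alpha}^{-1}$, the BW--norm squared of this extrinsic part equals $(d-1)^2\cdot\tfrac{2}{d(d-1)}=\tfrac{2(d-1)}{d}$. Plugging this into \eqref{simplerho_1} gives
\[\rho_1(\SV_{n,d})=\sqrt{\tfrac{d}{2(d-1)}}=\tfrac{1}{\sqrt{2}}\sqrt{1+\tfrac{1}{d-1}}.\]

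\textbf{Computing $\rho_2$.} Fixing one endpoint of the bottleneck at $x_0^d$ via Lemma \ref{simplereachlemma}, any geodesic of $S^N$ with $\gamma(0)=x_0^d$ and unit initial velocity $\eta\in N_{x_0^d}\SV_{n,d}$ reads $\gamma(t)=\cos t\cdot x_0^d+\sin t\cdot\eta$. I look for the smallest $t>0$ such that $\gamma(t)\in\SV_{n,d}$ \emph{and} $\dot{\gamma}(t)\perp T_{\gamma(t)}\SV_{n,d}$. Writing $\gamma(t)=\ell^d$ with $\ell=a_0x_0+\sum_{i\ge 1}a_ix_i$ a unit linear form, Lemma \ref{tangentVeronese} ensures that $\eta$ contains no $x_0^{d-1}x_i$ term, so matching the $x_0^{d-1}x_i$ coefficient of $\ell^d$ forces $d\,a_0^{d-1}a_i=0$ for every $i\ge 1$. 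Hence either $a_0=0$, in which case the vanishing $x_0^d$--coefficient of $\gamma(t)$ gives $\cos t=0$ and $t=\pi/2$, or $a_i=0$ for all $i\ge 1$, in which case $\ell=\pm x_0$ and $t\in\{0,\pi\}$. The first scenario is realized exactly when $\eta=\ell^d$ for a unit linear form $\ell$ in $x_1,\dots,x_n$, and for such $\eta$ orthogonality at the endpoint is automatic: $\dot{\gamma}(\pi/2)=-x_0^d$ is BW--orthogonal to $T_{\ell^d}\SV_{n,d}=\langle\ell^{d-1}\mu:\mu\perp\ell\rangle$, since those tangent vectors involve only the variables $x_1,\dots,x_n$. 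Hence $\rho_2(\SV_{n,d})=\pi/4$.

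\textbf{Putting it together.} The reach is $\rho(\SV_{n,d})=\min\{\rho_1,\rho_2\}$. An elementary algebraic comparison shows $\rho_1>\pi/4$ iff $d-1<\tfrac{8}{\pi^2-8}\approx 4.27$, so $\rho_2$ wins for $2\le d\le 5$ while $\rho_1$ wins for $d\ge 6$, matching the piecewise formula \eqref{actualreach}. The main technical point is the case analysis for $\rho_2$: the algebraic rigidity of requiring $\gamma(t)=\ell^d$ together with the normality of $\eta$ imposed by Lemma \ref{tangentVeronese} reduces all possibilities to two clean cases, pinning down $\pi/2$ as the minimum bottleneck length and thereby excluding any shorter orthogonal geodesic between two points of $\SV_{n,d}$.
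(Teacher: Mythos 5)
Your proof is correct and follows essentially the same route as the paper: use the $O(n+1)$--symmetry (Lemma \ref{simplereachlemma}) to reduce to the point $x_0^d$, compute the focal radius $\rho_1$ from the tangential acceleration of the geodesic $\bigl(\cos(s/\sqrt{d})\,x_0+\sin(s/\sqrt{d})\,x_1\bigr)^d$, and compute the bottleneck radius $\rho_2$ by matching Bombieri--Weyl coefficients in $\cos t\,x_0^d+\sin t\,\eta=\ell^d$ to force either $a_0=0$ (giving $t=\pi/2$) or $\ell=\pm x_0$ (giving $t\geq\pi$). One small imprecision in the $\rho_2$ step: the tangent space at $\ell^d$ is $\langle\ell^{d-1}\mu:\mu\perp\ell\rangle$ and the admissible $\mu$ \emph{do} include $x_0$ (since $x_0\perp\ell$), so those tangent vectors need not involve only $x_1,\dots,x_n$; the correct reason for $\dot\gamma(\pi/2)=-x_0^d$ being normal is that every monomial of $\ell^{d-1}\mu$ has $x_0$--degree at most $1<d$, hence is BW--orthogonal to $x_0^d$, which is also how the paper argues.
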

\begin{proof}
We begin with the computation of $\rho_1({\SV}_{n,d})$. By Proposition \ref{metricrelation} geodesics in ${\SV}_{n,d}$ can be realized as images through $\widehat \nu_{n,d}$ of geodesics in $S^n$. Moreover, thanks to Lemma \ref{simplereachlemma}, it is enough to consider geodesics passing through $x_0^d=(1,0,\dots,0)=\widehat \nu_{n,d}((1,0,\dots,0))$ at time $0$ and their direction plays no role, hence it is enough to consider the image of the geodesic in $S^n$ given by $\alpha(t)=x_0\cos(td^{-\frac{1}{2}})+x_1\sin(td^{-\frac{1}{2}})$ with $x_0$ being the point of coordinates $(1,0,\dots,0)$ and $x_1$ that of coordinates $(0,1,0,\dots,0)$. Explicitly we have $\alpha(t)=(\cos(td^{-\frac{1}{2}}),\sin(td^{-\frac{1}{2}}),0,\dots,0)$ and the corresponding geodesic in ${\SV}_{n,d}$ is given by
\begin{align*}
    \gamma(t):= (\widehat \nu_{n,d} \circ \alpha)(t)=\bigl(\cos^d(td^{-\frac{1}{2}}),\sqrt{d}\ \cos^{d-1}(td^{-\frac{1}{2}})\ \sin(td^{-\frac{1}{2}}),\dots,\sin^d(td^{-\frac{1}{2}})\bigr),
\end{align*}
with $\gamma(0)=x_0^d=(1,0,\dots,0)$ and $\norm{\dot{\gamma}(0)}=1$. Notice that the only components of $\gamma(t)$ which are not constantly zero are those corresponding to multi--indices $(\beta_0,\beta_1,0,\dots,0)$ with $\beta_0+\beta_1=d$. To compute $\norm{\overset{..}\gamma(0)}$, we need the second derivatives of the non--constantly zero components of $\gamma(t)$. These have the following expression for $k=0,\dots,d$:
\begin{align*}
    \binom{d}{k}^{\frac{1}{2}}\cos^k(td^{-\frac{1}{2}})\sin^{d-k}(td^{-\frac{1}{2}}).
\end{align*}
Computing second derivatives and evaluating at $t=0$ we find
\begin{align*}
    \overset{..}\gamma(0)=(-1,0,\sqrt{2}\frac{\sqrt{d(d-1)}}{d},0,\dots,0) .
\end{align*}
Since we are looking at $\SV_{n,d}$ as a submanifold of $S^N$, we first need to project this vector to the tangent space $T_{x_0^d}S^N$ and then compute its norm, obtaining
\begin{align*}
    \norm{\overset{..}{\gamma}(0)}_{S^N}=&\norm{\textrm{proj}_{T_{x_0^d}S^N}\big(-1,0,\sqrt{2}\frac{\sqrt{d(d-1)}}{d},0,\dots,0\big)}=\norm{\big(0,\sqrt{2}\frac{\sqrt{d(d-1)}}{d},0\dots,0\big)}\\= &\sqrt{2}\sqrt{\frac{d-1}{d}}.
\end{align*}
As a consequence we obtain the expression
\begin{align*}
    \rho_1(\Sigma_{n,d})=\frac{1}{\norm{\overset{..}{\gamma}(0)}}=\frac{1}{\sqrt{2}}\sqrt{\frac{d}{d-1}}=\frac{1}{\sqrt{2}}\sqrt{1+\frac{1}{d-1}}=\frac{1}{\sqrt{2}}+\frac{1}{2(d-1)\sqrt{2}}+\mathcal{O}\bigg(\frac{1}{d^2}\bigg)
\end{align*}
where in the last step we used the Taylor-MacLaurin expansion $\sqrt{1+x}=1+\frac{x}{2}+\mathcal{O}(x^2)$ for $x=\frac{1}{d-1}$.
\begin{remark}\label{remark:lower}Notice that, since $\rho_1(\SV_{n,d})=\frac{1}{\sqrt{2}}\sqrt{1+\frac{1}{d-1}}$, then $\rho_1(\SV_{n,d})>\frac{1}{\sqrt{2}}$ for all $n,d$.
\end{remark}
For $\rho_2({\SV}_{n,d})$, by Lemma \ref{simplereachlemma} it is enough to consider geodesics $\gamma(\theta)$ in $S^N$ starting at $x_0^d$ at time $\theta=0$. By Lemma \ref{tangentVeronese} and recalling that the normal space at a point $p \in {\SV}_{n,d}$ is the orthogonal complement of $T_p{\SV}_{n,d}$ inside $T_pS^N$, we have $N_{x_0^d}{\SV}_{n,d}=\big\langle\bigl\{\binom{d}{\alpha}^{\frac{1}{2}}x_0^{\alpha_0}\dots x_n^{\alpha_n} \ | \ \alpha_0<d-1 \bigr\}\big\rangle$. Pick a vector $w \in N_{x_0^d}{\SV}_{n,d}$  and let $\gamma_w(\theta)$ be the geodesic in $S^N$ with $\gamma_w(0)=x_0^d$ and $\dot{\gamma}_w(0)=w$, i.e. 
\begin{align*}
    \gamma_w(\theta)=x_0^d\ \cos\bigl(\theta\norm{w}\bigr)+\frac{w}{\norm{w}}\ \sin\bigl(\theta\norm{w}\bigr).
\end{align*}
The goal now is to understand when $\gamma_w$ meets again ${\SV}_{n,d}$ orthogonally. The first step is to find for which $b=(b_0,\dots,b_n) \in S^n$ and $\theta$ we at least have a solution to the equation
\begin{align*}
    (b_0x_0+\dots + b_nx_n)^d = x_0^d\ \cos\bigl(\theta\norm{w}\bigr)+\frac{w}{\norm{w}}\ \sin\bigl(\theta\norm{w}\bigr).
\end{align*}
On the right hand side, we expand $w$ as $w=\sum_{\alpha_0 < d-1}{w_{\alpha}\binom{d}{\alpha}^{\frac{1}{2}}x^{\alpha}}$, while we expand the left hand side as $\sum_{\alpha}{\binom{d}{\alpha}b^{\alpha}x^{\alpha}}$ with multi--indices $\alpha=(\alpha_0,\dots,\alpha_n)\in \Z_{\geq0}^{n+1}$ such that $\alpha_0+\dots+\alpha_n=d$. Hence, expanding it further in the Bombieri--Weyl basis, we get the equation
\begin{align}\label{equating}
b_0^dx_0^d + \sqrt{d}\ x_0^{d-1}\biggl(\sum_{i=1}^n{\sqrt{d}\ b_0^{d-1}b_ix_i}\biggr)+\sum_{\alpha_0 < d-1}{\binom{d}{\alpha}b^{\alpha}x^{\alpha}} = \\ \notag x_0^d\ \cos\bigl(\theta\norm{w}\bigr)+\sin\bigl(\theta\norm{w}\bigr)\sum_{\alpha_0<d-1}{\binom{d}{\alpha}^{\frac{1}{2}}\frac{w_{\alpha}}{\norm{w}}x^{\alpha}}.
\end{align}
Equating corresponding coefficients we get
\begin{align*}
    \begin{cases}
    b_0^d=\cos\bigl(\theta\norm{w}\bigr) & \\
    b_0^{d-1}b_i=0 & \forall \  i=1,\dots,n
\end{cases}.
\end{align*}
Now two cases can occur:
\begin{itemize}
    \item if $b_0 \neq 0$, the second equation above implies that $b_i=0$ for $i=1,\dots,n$, hence $b=(b_0,0,\dots,0)$. Since $b \in S^n$ it follows that $b_0=\pm1$. If $b_0=1$, then the meeting point is again $x_0^d$ and $\gamma_w$ comes back to it for $\theta=\frac{2\pi}{\norm{w}}$, and the same happens if $b_0=-1$ and $d$ is even. If $b_0=-1$ and $d$ is odd, then the meeting point corresponds to $-x_0^d$, and the meeting time is $\theta=\frac{\pi}{\norm{w}}$.
    \item if $b_0=0$ we get $\cos(\theta\|w\|)=0$ and $\gamma_w$ may meet ${\SV}_{n,d}$ at $(b_0x_0+\dots+b_nx_n)^d$ for $\theta=\frac{\pi}{2\norm{w}}$ or $\theta=\frac{3\pi}{2\norm{w}}$.
\end{itemize}
Now that we know when the curve intersects again ${\SV}_{n,d}$, we need to understand when it does that orthogonally. Notice that up to now we have used only some of the equations arising from \eqref{equating}: we now use the others to impose the orthogonality condition. \\
We look at the case $b_0=0$ and $\theta=\frac{\pi}{2\norm{w}}$. If we could find some $b=(0,b_1,\dots,b_n) \in S^n$ and $w \in N_{x_0^d}{\SV}_{n,d}$ such that $\gamma_w$ meets ${\SV}_{n,d}$ orthogonally at 
$(b_0x_0+\dots+b_nx_n)^d$ for $\theta=\frac{\pi}{2\norm{w}}$, then by the previous computation no other curve satisfying the conditions in \eqref{simplerho_2} could have length less than this one. Fix the following notations $\tilde b := (b_1,\dots,b_n)$, $\tilde x:=(x_1,\dots,x_n)$ and $\tilde \alpha:=(\alpha_1,\dots,\alpha_n)$. We have $\norm{\tilde b}=1$ and for $\theta = \frac{\pi}{2\norm{w}}$ equation \eqref{equating} becomes
\begin{align}\label{simplifiedequating}
    \sum_{\alpha_1+\dots+\alpha_n=d}{\binom{d}{\tilde \alpha}\tilde b ^{\tilde \alpha}\tilde x ^{\tilde \alpha}}= \sum_{\alpha_0 < d-1}{\binom{d}{\alpha}^{\frac{1}{2}}\frac{w_{\alpha}}{\norm{w}}x^{\alpha}},
\end{align}
Equating corresponding coefficients, we get $w_{\alpha}=0$ for each $\alpha=(\alpha_0,\dots,\alpha_n)$ such that $\alpha_0\neq 0$, while for the other multi--indices we get $\binom{d}{\tilde \alpha}^{\frac{1}{2}}\tilde b^{\tilde \alpha}=\frac{w_{\tilde \alpha}}{\norm{w}}$. These equations admit a solution and therefore a curve $\gamma_w$ with the properties described above exists. \\ Since we are assuming that $\gamma_w\bigl(\frac{\pi}{2\|w\|}\bigr)=(b_1x_1+\dots+b_nx_n)^d$, by Lemma \ref{tangentVeronese} we have $T_{\gamma_w\bigl(\frac{\pi}{2\norm{w}}\bigr)}{\SV}_{n,d} = \big\langle \tilde b^{d-1}\lambda \ | \ \lambda \ \text{is a linear form orthogonal to}\  \tilde b \big\rangle$.
The tangent vector to the curve $\gamma_w$ at the meeting point is given by
\begin{align*}
    \dot{\gamma}_w\biggl(\frac{\pi}{2\norm{w}}\biggr)=-x_0^d\norm{w}.
\end{align*}
Hence, since the monomials of the Bombieri--Weyl basis are mutually orthogonal and $x_0^d$ is not among those spanning the tangent space at the meeting point, $\gamma_w$ comes back to ${\SV}_{n,d}$ orthogonally for any choice of $w \in N_{x_0^d}{\SV}_{n,d}$ and $b \in S^n$ satisfying the conditions given by \eqref{simplifiedequating}. Moreover, the length of such a curve is independent of $w$ and always equal to $\frac{\pi}{2}$. Hence we get that $\rho_2({\SV}_{n,d})=\frac{\pi}{4}$.\\
Noticing that $\frac{1}{\sqrt{2}}\sqrt{1+\frac{1}{d-1}} < \frac{\pi}{4}$ as soon as $d \geq 6$, the proof is complete.
\end{proof}
Remark that the same results hold true for $\SW_{n,d} \hookrightarrow S^N$. Before moving on, let us comment on the meaning of the quantity $\rho_2({\SW}_{n,d})$ in our context. Recall once again our interpretation of ${\SW}_{n,d} \hookrightarrow S^N$ as the set of symmetric rank--one tensors among norm--$1$ ones. Since ${\SW}_{n,d}$ is compact and in particular closed, for every point in $S^N$ there will be a point in ${\SW}_{n,d}$ minimizing the distance between the chosen point and ${\SW}_{n,d}$. The point realizing the minimum need not be unique and indeed in general it is not. From the tensor point of view, fixed $p$ every distance minimizing point in ${\SW}_{n,d}$ provides a best rank--one approximation of the tensor represented by $p$. Since $S^N$ is compact, it is geodesically complete, and therefore there exists distance minimizing geodesics joining $p$ with each of the points minimizing the distance and each of the geodesics meet ${\SW}_{n,d}$ orthogonally. From this observation we obtain that a symmetric norm--$1$ tensor admits more than one best rank--one approximation if and only if the corresponding point in $S^N$ admits more than one distance minimizing geodesic orthogonal to ${\SW}_{n,d}$. It follows that the injectivity of the normal exponential map on $N^{\varepsilon}{\SW}_{n,d}$ ensures that every tensor represented by a point in the image admits a unique best rank--one approximation, since it will be joined to ${\SW}_{n,d}$ by a unique distance minimizing orthogonal geodesic. 

Given this, we can restate the result of Theorem \ref{reachtheorem} about $\rho_2({\SW}_{n,d})$ in the following way.
\begin{corollary}\label{bestrankonecorollary}
Every symmetric tensor $p$ at distance less than $\sin(\rho(V_{n,d})) \|p\|_{\mathrm{BW}}$ from rank--one admits a unique best rank--one approximation.\end{corollary}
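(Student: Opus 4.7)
The plan is to combine Friedland's theorem \cite{friedland} (best rank--one approximations of symmetric tensors are themselves symmetric) with the tubular neighbourhood/reach machinery from Section \ref{tubularsection} and the computation of $\rho(V_{n,d})$ in Theorem \ref{reachtheorem}. The argument is essentially the one already sketched in the paragraph preceding the statement, so the job is mostly to make each reduction precise.

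First, by positive rescaling I would reduce to the case $\|p\|_{\mathrm{BW}}=1$: both the cone $\W_{n,d}$ and the hypothesis are $1$--homogeneous in $p$. In this normalization, a best rank--one approximation is a minimizer of $\|p-q\|_{\mathrm{BW}}$ over $q\in \W_{n,d}$. By Friedland's theorem I may restrict to symmetric rank--one $q$; writing $q=tq_0$ with $q_0\in \SW_{n,d}$ and $t\in\R$, optimizing in $t$ first gives $t=\langle p,q_0\rangle_{\mathrm{BW}}$, so the problem reduces to maximizing $\langle p,q_0\rangle_{\mathrm{BW}}$ over $q_0\in \SW_{n,d}$, i.e.\ to minimizing the spherical distance $d_{S^N}(p,q_0)$. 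At the same time, a direct computation with $p$ unit and $\SW_{n,d}$ spanning the cone $\W_{n,d}$ yields the identity
\[
\mathrm{dist}_{\mathrm{BW}}(p,\W_{n,d})=\sin\bigl(d_{S^N}(p,\SW_{n,d})\bigr),
\]
so the hypothesis translates into $d_{S^N}(p,\SW_{n,d})<\rho(V_{n,d})$; equivalently, $p$ lies in the open spherical tube $\mathcal{U}(\SW_{n,d},\rho(V_{n,d}))$.

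For the last step, by Theorem \ref{reachtheorem} together with the definition of reach, the normal exponential map is a diffeomorphism from $N^{\rho(V_{n,d})}\SW_{n,d}$ onto $\mathcal{U}(\SW_{n,d},\rho(V_{n,d}))$, and in particular it is injective. If $p$ admitted two distinct nearest points $q_1\neq q_2$ in $\SW_{n,d}$, then by a standard first--variation argument each of the two length--$d_{S^N}(p,\SW_{n,d})$ minimizing geodesics from $q_i$ to $p$ would meet $\SW_{n,d}$ orthogonally at $q_i$, producing two distinct preimages of $p$ in $N^{\rho(V_{n,d})}\SW_{n,d}$ and contradicting injectivity. Hence the nearest point on $\SW_{n,d}$ is unique, which by the reductions above is exactly the uniqueness of the best rank--one approximation of $p$. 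The only mildly subtle points are the cone--to--sphere distance conversion and the orthogonality of distance--realizing geodesics; both are standard but should be stated explicitly, and neither constitutes a genuine obstacle once Theorem \ref{reachtheorem} is in hand.
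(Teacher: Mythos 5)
Your argument is correct and follows essentially the same path as the paper: reduce by homogeneity to unit norm, invoke Friedland to restrict to symmetric rank--one approximations, convert the cone distance to a spherical distance, and observe that injectivity of the normal exponential map inside the reach forces uniqueness of the foot point (hence of the best rank--one approximation) via the first--variation orthogonality of distance--minimizing geodesics. The only detail worth spelling out, which your sketch glosses over, is that the optimal scalar $t=\langle p,q_0\rangle_{\mathrm{BW}}$ leads to maximizing $\lvert\langle p,q_0\rangle_{\mathrm{BW}}\rvert$ (not just $\langle p,q_0\rangle_{\mathrm{BW}}$) over $q_0\in\SW_{n,d}$; the two are equivalent because $\SW_{n,d}=-\SW_{n,d}$, and the antipodal pair $\{q_0,-q_0\}$ yields the same approximation $tq_0$, so uniqueness of the spherical nearest point on $\SW_{n,d}$ is still exactly what is needed.
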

A consequence of this result is that the probability that a symmetric tensor admits a unique best rank--one approximation is bounded below by the normalized volume of the tubular neighbourhood of radius $\rho_{n,d}:=\rho(V_{n,d})$ on the sphere, that is to say,
\begin{align}\label{probabilityestimate}
    \mathbb{P}\big(\text{symmetric norm--$1$ tensor s.t.}\ \exists ! \ \text{best rank--one approximation}\big) \geq \frac{\mathrm{Vol}\big(\mathcal{U}({\SW}_{n,d},\rho_{n,d})\big)}{\mathrm{Vol}(S^N)}.
\end{align}

In Section \ref{volumechapter} we will apply \eqref{sphericalWeyl} to compute an exact formula for the volume of a tubular $\varepsilon$--neighbourhood $\mathrm{Vol}(\mathcal{U}({\SV}_{n,d},\varepsilon))$. We stress again that the reach computed in Theorem \ref{reachtheorem} gives a lower bound to the $\varepsilon$'s of validity of the formula we will find: we are guaranteed that it gives the correct result for any $\varepsilon < \rho({\SV}_{n,d})$. 
\bigskip

\section{The volume of the tubular neighbourhood}
\subsection{The second fundamental form of the spherical Veronese surface}\label{secondfundamentalchapter}
By Remark \ref{isometriesremark}, since we have an isometric transitive action of $O(n+1)$ on ${\SV}_{n,d}$ by restrictions of isometries of $S^N$, the extrinsic geometry of ${\SV}_{n,d}\hookrightarrow S^N$ is invariant under this action. Therefore, if we compute the second fundamental form at a specific point of ${\SV}_{n,d}$ we automatically know it at every point. We will now carry out the computation using the point $x_0^d \in {\SV}_{n,d}$ for simplicity. \\ \par 
By Remark \ref{secondfundformremark} to compute the second fundamental form of ${\SV}_{n,d}$ at $x_0^d$ along a normal direction $\eta$ it is enough to choose a local parametrization around $x_0^d$, compute its second derivatives and take their (Bombieri--Weyl) scalar product in $\R^{N+1}$ with $\eta$. This way we will obtain the matrix representing the Weingarten operator at $x_0^d$ with respect to the basis of $T_{x_0^d}{\SV}_{n,d}$ given by the derivatives of the chosen parametrization. \\ 
Consider the projection on $S^n$ from the tangent plane at $(1,0\dots,0)$, giving a parametrization of the upper hemisphere. Composing it with the Veronese map $\widehat \nu_{n,d}$ we obtain a parametrization $\varphi_{n,d}$ of the part of ${\SV}_{n,d}$ contained in the upper hemisphere of $S^N$, explicitly given by
\begin{align*}
    \varphi_{n.d}: \ \mathds{R}^n \ & \longrightarrow \ U \subset {\SV}_{n,d} \\ \notag
    a=(a_1,\dots,a_n) & \longmapsto \biggl(\frac{x_0+a_1x_1+\dots+a_nx_n}{(1+\|a\|^2)^{\frac{1}{2}}}\biggr)^d.
\end{align*}
Since $\varphi_{n,d}^{-1}(x_0^d)=(0,\dots,0)$, we have to compute the first and second derivatives of $\varphi_{n,d}$ at the origin. We obtain the following expressions
\begin{align}
\frac{\partial \varphi_{n,d}}{\partial a_i}(a)\bigg|_{a=0}&=dx_0^{d-1}x_i, \label{firstderivatives}\\ 
    \frac{\partial^2\varphi_{n,d}}{\partial a_i\partial a_j}(a)\bigg|_{a=0}&=-\delta_{ij}(d x_0^d)+d(d-1)x_0^{d-2}x_ix_j.
\end{align}
For $i=1,\dots,n$ denote by $e_i=\sqrt{d}x_0^{d-1}x_i$ the orthonormal vectors in the Bombieri--Weyl basis with power $d-1$ on $x_0$. By \eqref{firstderivatives} the basis of $T_{x_0^d}{\SV}_{n,d}$ given by the first derivatives of the parametrization is $\{\sqrt{d}e_i \ | \ i=1,\dots,n\}$. Instead of using this basis, we compute the matrix representing the Weingarten operator with respect to the orthonormal basis $\{e_i\}_{i=1,\dots,n}$ along a normal direction $\eta \in N_{x_0^d}{\SV}_{n,d}$. Denoting by $L_{\eta}=(L_{\eta,ij})_{i,j=1,\dots,n}$ this matrix, by Remark \ref{Weingartenmatrixremark} we have
\begin{align}\label{weingartenmidstep}
    L_{\eta,ij}&=H_{\eta}(e_i,e_j) \ = \ \frac{1}{d}\ H_{\eta}\biggl(\sqrt{d}e_i,\sqrt{d}e_j\biggr) = \frac{1}{d}\ H_{\eta}\biggl(\frac{\partial \varphi_{n,d}}{\partial a_i}(a)\bigg|_{a=0},\frac{\partial \varphi_{n,d}}{\partial a_j}(a)\bigg|_{a=0}\biggr) = \\ \notag & = \ \frac{1}{d}\ \big\langle \ \frac{\partial^2 \varphi_{n,d}}{\partial a_i \partial a_j}(a)\bigg|_{a=0},\eta \ \big\rangle_{\mathds{R}^{N+1}} \ = \ \frac{1}{d} \ \big\langle -\delta_{ij}(dx_0^d)+d(d-1)x_0^{d-2}x_ix_j, \eta \big\rangle_{\mathds{R}^{N+1}}.
\end{align}
By Lemma \ref{tangentVeronese} we have $N_{x_0^d}{\SV}_{n,d}=\big\langle\bigl\{\binom{d}{\alpha}^{\frac{1}{2}}x_0^{\alpha_0}\dots x_n^{\alpha_n} \ | \ \alpha_0<d-1 \bigr\}\big\rangle$ and we can expand $\eta=\sum_{\alpha_0\leq d-2}\eta_{\alpha}\binom{d}{\alpha}^{\frac{1}{2}}x^{\alpha}$. Then from \eqref{weingartenmidstep}, recalling that everything is expressed in terms of an orthonormal basis, we obtain 
\begin{align} \label{Weingartendiagonal}
    L_{\eta,ii}&=\sqrt{2\bigl(\frac{d-1}{d}\bigr)}\ \eta_{d-2,0,,\dots,2,\dots,0}, \\ \label{Weingartenoffdiagonal}
    L_{\eta,ij}&=\sqrt{\frac{d-1}{d}}\ \eta_{d-2,\dots,1,\dots,1,\dots,0} \ \ \text{for $i\neq j$}.
\end{align}
Consider the following orthogonal direct sum decomposition of $N_{x_0^d}{\SV}_{n,d}$
\begin{align}\label{normaldecomposition}
    N_{x_0^d}{\SV}_{n,d}=\big\langle\bigg\{\binom{d}{\alpha}^{\frac{1}{2}}x_0^{d-2}x_ix_j \ | \ i,j=1,\dots,n\bigg\}\big\rangle \oplus \big\langle\bigg\{\binom{d}{\alpha}^{\frac{1}{2}}x^{\alpha} \ | \ \alpha_0 < d-2\bigg\}\big\rangle =: W \oplus P.
\end{align}
We define a map from $W$ to $\R[x_1,\dots,x_n]_{(2)}$ by setting 
\begin{align*}
    \binom{d}{\alpha}^{\frac{1}{2}}x_0^{d-2}x_ix_j \longmapsto \binom{2}{(\alpha_i,\alpha_j)}^{\frac{1}{2}}x_ix_j
\end{align*}
and extending by linearity. Since we are mapping an orthonormal basis for $W$ with the induced Bombieri--Weyl product to an orthonormal basis of $\R[x_1,\dots,x_n]_{(2)}$ with its own Bombieri--Weyl product, this defines a linear isometry. Composing it with the inverse of the isomorphism we described in \eqref{isometryFrobKost} we get a linear isometry of $W$ with $\mathrm{Sym}(n,\R)$ and therefore a correspondence between the associated Gaussian probability distributions. A direct consequence of this linear isometry, of the discussion in section \ref{GOEKostlansection} about $\mathrm{GOE}(n)$ matrices and formulae \eqref{Weingartendiagonal} and \eqref{Weingartenoffdiagonal}, is the following theorem.
\begin{theorem}\label{normaldecompositiontheorem}
Consider the decomposition $N_{x_0^d}{\SV}_{n,d} = W \oplus P$ given in \eqref{normaldecomposition}. Then the following statements hold: 
\begin{enumerate}
    \item $L_{\eta}=0$ for every $\eta \in P$;
    \item if we pick $\eta \in W$ Gaussian w.r.t the Bombieri--Weyl metric, then the distribution of the Weingarten operator at $x_0^d$ along $\eta$ is  $L_{\eta} \sim \sqrt{2}\biggl(\frac{d-1}{d}\biggr)^{\frac{1}{2}}\mathrm{GOE}(n)$.
\end{enumerate}  
\end{theorem}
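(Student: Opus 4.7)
The plan is to read both claims directly off the explicit expressions \eqref{Weingartendiagonal} and \eqref{Weingartenoffdiagonal} for the entries of $L_\eta$ derived just above the statement, combined with the linear isometry $W \simeq \mathrm{Sym}(n,\R)$ also constructed there. In this sense the proof is a short ``bundling'' of facts already established; almost no new computation is required.

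For part (1), observe that the right-hand sides of \eqref{Weingartendiagonal} and \eqref{Weingartenoffdiagonal} involve only those components $\eta_\alpha$ of $\eta$ for which $\alpha_0 = d-2$, i.e., only the orthogonal projection of $\eta$ onto $W$ in the decomposition \eqref{normaldecomposition}. Hence if $\eta \in P$, all relevant components vanish and $L_{\eta,ii} = L_{\eta,ij} = 0$ for every $i,j$, so $L_\eta = 0$.

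For part (2), denote by $\Psi \colon W \to \mathrm{Sym}(n,\R)$ the composition of the linear isometry $W \to \R[x_1,\dots,x_n]_{(2)}$ defined by $\binom{d}{\alpha}^{1/2}x_0^{d-2}x_ix_j \mapsto \binom{2}{(\alpha_i,\alpha_j)}^{1/2}x_ix_j$ with $\phi^{-1}$ from \eqref{isometryFrobKost}. Since isometries of Euclidean spaces push standard Gaussians to standard Gaussians, the image $\Psi(\eta)$ of a Bombieri--Weyl Gaussian $\eta \in W$ is a standard Frobenius Gaussian on $\mathrm{Sym}(n,\R)$, which is by definition a $\mathrm{GOE}(n)$ matrix. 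Explicitly, the coordinates of $\eta$ in the Bombieri--Weyl orthonormal basis of $W$ are i.i.d.\ $N(0,1)$, and they are mapped to the i.i.d.\ $N(0,1)$ parameters $\eta_{ij}$ appearing in the definition of $\mathrm{GOE}(n)$.

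It then only remains to verify the scalar factor. Plugging i.i.d.\ $N(0,1)$ variables into \eqref{Weingartendiagonal} and \eqref{Weingartenoffdiagonal} gives independent entries with $L_{\eta,ii} \sim N\bigl(0,\tfrac{2(d-1)}{d}\bigr)$ and $L_{\eta,ij} \sim N\bigl(0,\tfrac{d-1}{d}\bigr)$ for $i\neq j$, which matches exactly the distribution of $\sqrt{2(d-1)/d}\,\mathrm{GOE}(n)$ (whose diagonal has variance $2(d-1)/d$ and off-diagonal has variance $(d-1)/d$). The only point requiring care is bookkeeping the Bombieri--Weyl normalization: the factor of $\sqrt{2}$ separating \eqref{Weingartendiagonal} from \eqref{Weingartenoffdiagonal} is precisely the discrepancy built into the definition of $\mathrm{GOE}(n)$ between diagonal and off-diagonal entries, so a single overall scalar $\sqrt{2(d-1)/d}$ suffices and no separate rescaling of the diagonal is needed.
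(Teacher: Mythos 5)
Your proposal is correct and follows essentially the same route as the paper: combine the explicit entry formulae \eqref{Weingartendiagonal}--\eqref{Weingartenoffdiagonal} (which depend only on the $W$-component of $\eta$, giving part (1)) with the linear isometry $W \simeq \mathrm{Sym}(n,\R)$ constructed just before the theorem, and observe that the $\sqrt{2}$ mismatch between diagonal and off-diagonal entries is exactly the built-in asymmetry of $\mathrm{GOE}(n)$, so a single overall scalar $\sqrt{2(d-1)/d}$ gives the claimed distribution. The paper states this as a ``direct consequence'' without further elaboration; you have simply spelled out the same verification.
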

\begin{remark}Notice that Theorem \ref{thm:decointro} from the Introduction follows immediately from Theorem \ref{normaldecompositiontheorem} using the fact that for every $p\in V_{n,d}$ there is a linear isometry $\tau:S^N\to S^N$ such that $\tau(V_{n,d})=V_{n,d}$ and $\tau (p)=x_0^d.$
\end{remark}
Theorem \ref{normaldecompositiontheorem} gives a full description of the extrinsic geometry of ${\SV}_{n,d} \hookrightarrow S^N$ in terms of random matrices. From the computational point of view, it allows reducing integrals on the normal bundle of ${\SV}_{n,d}$ of quantities related to the second fundamental form to expected values of quantities related to $\mathrm{GOE}(n)$ matrices. In the next section, we will use this description to explicitly compute the integrals appearing in Weyl's tube formula \eqref{sphericalWeyl}, thus obtaining the curvature coefficients of the embedding $V_{n,d} \hookrightarrow S^N$. \\

\subsection{The curvature coefficients}\label{volumechapter}
All this section will be dedicated to proving the following.
\begin{theorem}\label{finalvolumetheorem}
Let ${\SW}_{n,d} \hookrightarrow S^N$ be the spherical Veronese variety and \, $\mathcal{U}({\SW}_{n,d},\varepsilon)$ be defined as in \eqref{tubularneighbourhooddef}. If $\varepsilon < \rho({\SW}_{n,d})$, the following formula holds:
\begin{align}\label{finalformulasimplified}
     \mathrm{Vol}(\mathcal{U}({\SW}_{n,d},\varepsilon))=\sum_{0\le j \le n, \text{\ j even}}(-1)^{\frac{j}{2}}d^{\frac{n}{2}}\biggl(\frac{d-1}{d}\biggr)^{\frac{j}{2}}
    \frac{2^{n+2-j}\pi^{\frac{N}{2}}\Gamma\left(\frac{n}{2}+1\right)}{\Gamma\left(\frac{j}{2}+1\right)\Gamma(n+1-j)\Gamma\left(\frac{N+j-n}{2}\right)}J_{N,N-n+j}(\varepsilon),
\end{align}
where for $0\leq j \leq n$, $j$ even, the functions $J_{N,N-n+j}$ are given by \eqref{Jspherical}.
\end{theorem}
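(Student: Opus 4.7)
The plan is to apply the integral form of Weyl's tube formula \eqref{sphericalWeyl} directly to $\SW_{n,d} \hookrightarrow S^N$, and then reduce each piece of the integrand using (i) the homogeneity of the embedding, (ii) the normal bundle splitting from Theorem \ref{normaldecompositiontheorem}, (iii) the sphere-integration Lemma \ref{integrationlemma}, and (iv) the identification of the Weingarten operator on $W$ with a rescaled GOE matrix. By Remark \ref{isometriesremark}, the inner integral $\int_0^{\tan\varepsilon}\!\int_{S(N_pM)} t^{s-1}(1+t^2)^{-(N+1)/2}\det[I_n-tL_\eta]\,d\eta\,dt$ (with $s=N-n$) is $O(n+1)$-invariant, hence independent of $p \in \SW_{n,d}$, so I can fix $p=x_0^d$ and factor out the volume $\mathrm{Vol}_n^{BW}(\SW_{n,d}) = d^{n/2}\mathrm{Vol}(S^n)$ from \eqref{explicitsphericalvolume}. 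Expanding $\det[I_n-tL_\eta] = \sum_{j=0}^n (-t)^j e_j(L_\eta)$, where $e_j$ is the $j$-th elementary symmetric polynomial of the eigenvalues, the $t$-variable assembles into the factor $(-1)^j J_{N,s+j}(\varepsilon)$, and the problem reduces to computing the coefficients $C_j := \int_{S(N_pM)} e_j(L_\eta)\,d\eta$ for each $j \in \{0,\ldots,n\}$.

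To evaluate $C_j$ I use the orthogonal splitting $N_{x_0^d}\SW_{n,d} = W \oplus P$ of Theorem \ref{normaldecompositiontheorem}, with $w:=\dim W = n(n+1)/2$ and $q:=\dim P = s-w$. Every unit normal $\eta$ decomposes uniquely as $\eta = \sqrt{1-|z|^2}\,\sigma + z$ with $\sigma \in S(W)$ and $z$ in the open unit disc of $P$. Since $L$ vanishes on $P$, linearity gives $L_\eta = \sqrt{1-|z|^2}\, L_\sigma$ and hence $e_j(L_\eta) = (1-|z|^2)^{j/2} e_j(L_\sigma)$. Applying Lemma \ref{integrationlemma} with $k = w-1$ and $m = s-1$ splits $C_j$ as
$$C_j \;=\; \left(\int_{S(W)} e_j(L_\sigma)\,d\sigma\right)\left(\int_{D^q} (1-|z|^2)^{(j+w-2)/2}\,dz\right),$$
and the disc integral reduces by polar coordinates to a classical beta integral equal to $\pi^{q/2}\Gamma\!\big(\tfrac{j+w}{2}\big)/\Gamma\!\big(\tfrac{j+w+q}{2}\big) = \pi^{q/2}\Gamma\!\big(\tfrac{j+w}{2}\big)/\Gamma\!\big(\tfrac{N+j-n}{2}\big)$.

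For the $\sigma$-integral I convert to a Gaussian expectation: writing a Bombieri--Weyl Gaussian $\eta \in W$ in polar form $\eta = r\sigma$, with $\sigma$ uniform on $S(W)$ and $r$ a $\chi$-variable with $w$ degrees of freedom, one has $\mathbb{E}[e_j(L_\eta)] = \mathbb{E}[r^j]\cdot \mathrm{Vol}(S^{w-1})^{-1}\int_{S(W)} e_j(L_\sigma)\,d\sigma$. By Theorem \ref{normaldecompositiontheorem}, $L_\eta \sim \sqrt{2(d-1)/d}\,G$ with $G \sim \mathrm{GOE}(n)$, so $\mathbb{E}[e_j(L_\eta)] = (2(d-1)/d)^{j/2}\,\mathbb{E}[e_j(G)]$, and the explicit value of $\mathbb{E}[e_j(G)]$ provided by Appendix \ref{expectationappendix} vanishes for odd $j$ and produces the required positive expression for even $j$. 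Combining this with $\mathbb{E}[r^j] = 2^{j/2}\Gamma((w+j)/2)/\Gamma(w/2)$, the factor $\mathrm{Vol}(S^{w-1}) = 2\pi^{w/2}/\Gamma(w/2)$ cancels the $\Gamma((w+j)/2)$-term from the disc integral, leaving only $\pi^{q/2}/\Gamma((N+j-n)/2)$ in the normal-bundle factor. Reassembling with the overall prefactor $d^{n/2}\mathrm{Vol}(S^n) = 2\, d^{n/2}\pi^{(n+1)/2}/\Gamma((n+1)/2)$ and the sign $(-1)^j = (-1)^{j/2}$ for even $j$, one obtains the stated formula. The main obstacle will be purely combinatorial bookkeeping in the final simplification: matching the exact numerical coefficient requires combining several applications of $\Gamma(x+1)=x\Gamma(x)$ and the Legendre duplication formula to eliminate the $\Gamma(w/2)$ and half-integer gammas, turn $2d^{n/2}\pi^{(n+1)/2}\cdot\pi^{q/2}\cdot\pi^{w/2}$ into $\pi^{N/2}$ via $q+w+n+1 = N+1$, and recognize the surviving combinatorial factor from $\mathbb{E}[e_j(G)]$ as $\tfrac{2^{n+2-j}\Gamma(n/2+1)}{\Gamma(j/2+1)\Gamma(n+1-j)}$.
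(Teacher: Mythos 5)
Your proposal follows essentially the same path as the paper's proof: factor out $\mathrm{Vol}(\SW_{n,d})$ by homogeneity, split the normal space into $W\oplus P$, reduce the spherical integral to a $\mathrm{GOE}(n)$ determinant expectation via Lemma \ref{integrationlemma} and polar coordinates, and then simplify with gamma-function identities. Two minor slips worth noting: for even $j$ one has $(-1)^j=1$ rather than $(-1)^{j/2}$ (the alternating sign in the final formula comes entirely from $\mathbb{E}[e_j(G)]$, which carries the factor $(-1)^{j/2}$ per Appendix \ref{expectationappendix}), and for $d$ even the $O(n+1)$-action is transitive only on each of the two antipodal components of $\SW_{n,d}$ (so point-independence of the integrand also uses the antipodal isometry), which the paper sidesteps by integrating over $\SV_{n,d}$ first and rescaling to $\SW_{n,d}$ only at the very end.
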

\noindent Comparing \eqref{finalformulasimplified} to Weyl's tube formula \eqref{implicitWeyl}, we obtain the following corollary.
\begin{corollary}
The curvature coefficients of the spherical Veronese variety ${\SW}_{n,d} \hookrightarrow S^N$ are as follows:
\begin{align*}
     K_{N-n+j}({\SW}_{n,d})=(-1)^{\frac{j}{2}}d^{\frac{n}{2}}\biggl(\frac{d-1}{d}\biggr)^{\frac{j}{2}}
    \frac{2^{n+2-j}\pi^{\frac{N}{2}}\Gamma\left(\frac{n}{2}+1\right)}{\Gamma\left(\frac{j}{2}+1\right)\Gamma(n+1-j)\Gamma\left(\frac{N+j-n}{2}\right)},
\end{align*}
for $0 \leq j \leq n$, $j$ even, and $K_{N-n+j}(\SW_{n,d})=0$ otherwise.
\end{corollary}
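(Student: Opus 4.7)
The plan is to specialize the integral version of Weyl's tube formula \eqref{sphericalWeyl} to $M = \SW_{n,d} \hookrightarrow S^N$, and reduce all pieces to quantities we already control. First I would invoke Remark \ref{isometriesremark}: the restriction of $\rho(R)$ to $\SW_{n,d}$ is an isometry of $\SW_{n,d}$ induced by an isometry of the ambient sphere, so the second fundamental form (and hence the inner integrand in \eqref{sphericalWeyl}) is independent of the base point $p \in \SW_{n,d}$. Fixing $p = x_0^d$, the outer integral over $\SW_{n,d}$ factors out as $\mathrm{Vol}_n^{BW}(\SW_{n,d}) = 2 d^{n/2} \pi^{(n+1)/2}/\Gamma((n+1)/2)$ by \eqref{explicitsphericalvolume}. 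The problem reduces to evaluating
\begin{equation*}
\int_0^{\tan\varepsilon} \frac{t^{s-1}}{(1+t^2)^{(N+1)/2}} \left( \int_{S(N_{x_0^d}\SW_{n,d})} \det(I_n - t L_\eta)\, d\eta \right) dt, \qquad s = N-n.
\end{equation*}

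Next I would use Theorem \ref{normaldecompositiontheorem} to split $N_{x_0^d}\SW_{n,d} = W \oplus P$, so that $L_\eta$ depends only on the $W$-component. Applying Lemma \ref{integrationlemma} with the sphere factor $S^{w-1} = S(W)$ (where $w = \dim W = n(n+1)/2$) and ball factor $\overset{\circ}{D}{}^{\dim P}$ in $P$, I write $\eta = (\sqrt{1-|z|^2}\, \sigma, z)$, so that $L_\eta = \sqrt{1-|z|^2}\, L_\sigma$ and
\begin{equation*}
\det(I_n - t L_\eta) = \sum_{j=0}^n (-t)^j (1-|z|^2)^{j/2} p_j(L_\sigma),
\end{equation*}
where $p_j(A) = \sum_{|S|=j} \det(A_S)$ is the $j$-th sum of principal minors. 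The $z$-integral becomes a Beta integral,
\begin{equation*}
\int_{\overset{\circ}{D}{}^{\dim P}} (1-|z|^2)^{(w+j-2)/2}\, dz = \frac{\pi^{(\dim P)/2}\, \Gamma((w+j)/2)}{\Gamma((N-n+j)/2)},
\end{equation*}
while the $\sigma$-integral is converted to a Gaussian expectation on $W$ by radial integration: for any degree-$j$ homogeneous polynomial $q$ on $W$, $\int_{S^{w-1}} q(\sigma)\, d\sigma = \mathbb{E}_{\eta\sim N(0,I_W)}[q(\eta)]\cdot \pi^{w/2} \cdot 2^{1-j/2}/\Gamma((w+j)/2)$. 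Applying this to $q(\eta) = p_j(L_\eta)$ and using Theorem \ref{normaldecompositiontheorem}(2) to replace $L_\eta$ by $\sqrt{2(d-1)/d}\, G$ with $G\sim \mathrm{GOE}(n)$, the scaling factor $(2(d-1)/d)^{j/2}$ pops out and I am left with $\mathbb{E}[p_j(G)]$.

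The odd-$j$ contributions vanish since $G \sim -G$ forces $\mathbb{E}[p_j(G)] = 0$ for $j$ odd. For even $j$, the expectation $\mathbb{E}[p_j(G)]$ is computed by Isserlis/Wick pairing on the Gaussian entries of $\mathrm{GOE}(n)$: this is the combinatorial input delegated to Appendix \ref{expectationappendix}, and one expects an answer of the form $\mathbb{E}[p_j(G)] = (-1)^{j/2}\, n!/[2^j (j/2)!\,(n-j)!]$ (as sanity check, $j=2$ gives $-n(n-1)/4$, matching $-\binom{n}{2}\cdot \tfrac{1}{2}$ from $\sum_{i<j}\mathbb{E}[-G_{ij}^2]$). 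Assembling all factors, collecting the powers of $\pi$ via $\pi^{(n+1)/2 + (N-n)/2} = \pi^{(N+1)/2}$, and using the Legendre duplication formula $\Gamma((n+1)/2)\Gamma(n/2+1) = \sqrt{\pi}\, 2^{-n}\, n!$ to rearrange $n!/\Gamma((n+1)/2)$ into the $\Gamma(n/2+1)$ in the stated formula, produces \eqref{finalformulasimplified}; the remaining integral in $t$ is precisely $J_{N,N-n+j}(\varepsilon)$ by \eqref{Jspherical}. The main obstacle is the bookkeeping: threading the normalizations from the sphere-to-Gaussian conversion, the Beta integral over the disk, the $(2(d-1)/d)^{j/2}$ scaling, and the combinatorial Wick computation, all while maintaining the correct collapse to $((d-1)/d)^{j/2}$ and to $\Gamma(n/2+1)/[\Gamma(j/2+1)\Gamma(n+1-j)]$ in the final expression. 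The underlying combinatorial evaluation of $\mathbb{E}[p_j(G)]$ in the appendix is the essential ingredient that makes the formula closed-form.
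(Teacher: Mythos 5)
Your proposal follows the paper's proof almost step for step: factoring out the volume by invariance, applying Lemma \ref{integrationlemma} to the decomposition $W\oplus P$, using the linearity of the Weingarten operator to write $L_\eta=\sqrt{1-|z|^2}L_\sigma$, expanding the determinant, and converting the sphere integral into a GOE expectation via radial integration, with the Wick/combinatorial evaluation of $\mathbb{E}[p_j(G)]$ deferred to the appendix exactly as the paper does (your formula $(-1)^{j/2}n!/[2^j(j/2)!(n-j)!]$ matches \eqref{gjexpectation}, your Beta integral matches \eqref{diskintegral}, and the duplication-formula bookkeeping to produce $\pi^{N/2}\Gamma(n/2+1)$ is the same final step). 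The only cosmetic differences are that you work with $\SW_{n,d}$ directly rather than passing through $\SV_{n,d}$ and doubling at the end, and you argue the vanishing of odd-$j$ terms by the symmetry $G\sim -G$ rather than reading it off the explicit expansion; neither changes the substance.
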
 \smallskip
In order to prove \ref{finalvolumetheorem} we start from Weyl's tube formula \eqref{sphericalWeyl} applied to $\SV_{n,d}\hookrightarrow S^N$. As we already noticed, Remark \ref{isometriesremark} implies that the Weingarten operator looks the same at every point. It follows that in this case the integrand in \eqref{sphericalWeyl} does not depend on $p \in {\SV}_{n,d}$ and we obtain 
\begin{align}\label{simp1integral}
    \mathrm{Vol}\bigl(\mathcal{U}({\SV}_{n,d},\varepsilon)\bigr)=\mathrm{Vol}({\SV}_{n,d})\int_{t=0}^{\tan{\varepsilon}}{\int_{\eta \in S(N_{x_0^d}{\SV}_{n,d})}{\frac{t^{N-n-1}\det(I_n-t L_{\eta})}{(1+t^2)^{\frac{N+1}{2}}}}\, d\eta \, dt},
\end{align}
where we remark that $N-n$ is the codimension of ${\SV}_{n,d} \hookrightarrow S^N$ and $\mathrm{Vol}({\SV}_{n,d})$ is given by \eqref{explicitVeronesevolume}. \\

Given the decomposition in \eqref{normaldecomposition}, we have $S(N_{x_0^d}{\SV}_{n,d})=S(W \oplus P)$, where $\mathrm{dim}(S(N_{x_0^d}{\SV}_{n,d}))=N-n-1$ and $\mathrm{dim}(W)=\mathrm{dim}(\mathrm{Sym}(n,\R))=\frac{n(n+1)}{2}$. Notice that if $d=2$ we have $N_{x_0^d}{\SV}_{n,d}=W$. If $d>2$ we parametrize $S(W \oplus P)$ as in \eqref{parametrizationlemma}, where here we use $m=N-n-1$ and $k=\frac{n(n+1)}{2}-1$. With the same notation of section \ref{sectionlemma}, for $\sigma \in S(W)$ and $z \in \overset{\circ}{D}{}(P)$, if $\varphi(\sigma,z)=\eta \in S(W\oplus P)$, we have that $\sqrt{1-\abs{z}^2}\iota(\sigma)$ will be the component of $\eta$ along $W$, while $z$ itself will be the component along $P$. We also apply the linear isometry discussed in the previous section to change variable from $\sigma \in S(W)$ to $Q \in S(\mathrm{Sym}(n,\R))=S^{\frac{n(n+1)}{2}-1}$. \\

It is clear by its definition that the Weingarten operator is linear in the normal vector argument: given an isometric embedding $M \hookrightarrow \overline{M}$, for every $p\in M$, $\eta$, $\xi \in N_{p}M$ and $a$, $b \in \R$, we have $L_{a\eta+b\xi}=a L_{\eta}+b L_{\xi}$. Therefore for $\eta=\varphi(Q,z) \in S(W\oplus P)$ we have
\begin{align}\label{secondformsplitting}
     L_{\eta}=L_{\sqrt{1-\abs{z}^2}Q+z}=\sqrt{1-\abs{z}^2}L_{Q}+L_z= \sqrt{1-\abs{z}^2}L_{Q}.
\end{align}
Applying Lemma \ref{integrationlemma} to \eqref{simp1integral} and using \eqref{secondformsplitting} the integral becomes
\begin{align}\label{simp2integral}
    \mathrm{Vol}\bigl(\mathcal{U}({\SV}_{n,d},\varepsilon)\bigr)=&\mathrm{Vol}({\SV}_{n,d})\int_{t=0}^{\tan{\varepsilon}}{\int_ {S^{\frac{n(n+1)}{2}-1}}{\int_{D^{N-n-\frac{n(n+1)}{2}}}\bigg[{\frac{t^{N-n-1}}{(1+t^2)^{\frac{N+1}{2}}}}}} \times \\ \notag &\times \det(I_n-t\sqrt{1-\abs{z}^2} L_Q) (1-\abs{z}^2)^{\frac{n(n+1)}{4}-1}\bigg] \ dz \ dS(Q) \ dt,
\end{align}
where $dz$ is a short notation for $\mathrm{vol}_{D^{N-n-\frac{n(n+1)}{2}}}$ and $dS(Q)$ is a short notation for $\mathrm{vol}_{S^{\frac{n(n+1)}{2}-1}}$, with the convention that for $d=2$ the integral over $D^0$ is set to $1$. The only non-explicit term in \eqref{simp2integral} is the one involving the determinant. Recall that by Theorem \ref{normaldecompositiontheorem}, if $Q \in \mathrm{Sym}(n,\R)$ is a random $\mathrm{GOE}(n)$ matrix, then $L_Q$ is a random matrix distributed as $\sqrt{2}\big(\frac{d-1}{d}\big)^{\frac{1}{2}}\mathrm{GOE}(n)$. Set $\tau:=t\sqrt{2}\big(\frac{d-1}{d}\big)^{\frac{1}{2}}$. We have the expansion
\begin{align}\label{detexpansion}
    \mathrm{det}\big(I_n - \tau\sqrt{1-\abs{z}^2}Q\big)=\sum_{j=0}^n
(-1)^j\tau^j(1-\abs{z}^2)^{\frac{j}{2}}g_j(Q),
\end{align}
where $g_j(Q)$ are homogeneous polynomials of degree $j$ in the coefficients of $Q$ for $j=1,\dots,n$ and $g_0(Q)=1$. Substituting \eqref{detexpansion} into \eqref{simp2integral} in the integral splits as
\begin{align}\label{simp3integral}
    \mathrm{Vol}(\mathcal{U}({\SV}_{n,d},\varepsilon))=& \ \mathrm{Vol}({\SV}_{n,d})\mathlarger{\sum}_{j=0}^n{(-1)^j2^{\frac{j}{2}}\biggl(\frac{d-1}{d}\biggr)^{\frac{j}{2}}\biggl(\int_0^{\tan\varepsilon}{\frac{t^{N-n-1+j}}{(1+t^2)^{\frac{N+1}{2}}}\ dt}\biggr)} \times \\ \notag \times & \biggl(\int_{D^{N-n-\frac{n(n+1)}{2}}}{(1-\abs{z}^2)^{\frac{n(n+1)}{4}-1+\frac{j}{2}}\ dz}\biggr) \times \\ \notag \times & \biggl(\int_{S^{\frac{n(n+1)}{2}-1}}{g_j(Q) \ dS(Q)}\biggr),
\end{align}
where the first term is the integral of a rational function in $t$, while the second one is a \lq\lq polynomial\rq\rq \ in $\abs{z}$. \\
Remark that since $g_j$ are homogeneous polynomials, we have $g_j(Q)=\norm{Q}^jg_j(\frac{Q}{\norm{Q}})$. Recalling expression \eqref{GOEdensity} we have
\begin{align}\label{expectationintegral}
    \underset{Q \in \mathrm{GOE}(n)}{\mathds{E}} g_j(Q) \ & = \ \frac{1}{(2\pi)^{\frac{n(n+1)}{4}}}\int_{\mathrm{Sym}(n,\R)}{\norm{Q}^j g_j\biggl(\frac{Q}{\norm{Q}}\biggr)e^{-\frac{\norm{Q}^2}{2}}dQ} \ = \\ \notag & = \frac{1}{(2\pi)^{\frac{n(n+1)}{4}}}\biggl(\int_0^{+\infty}{\rho^{\frac{n(n+1)}{2}-1+j}e^{-\frac{\rho^2}{2}}d\rho}\biggr)\biggl(\int_{S^{\frac{n(n+1)}{2}-1}}{g_j(\tilde Q) \ dS\big(\tilde Q\big)}\biggr).
\end{align}
From \eqref{expectationintegral} we obtain
\begin{align}\label{g_jbeforesub}
    \int_{S^{\frac{n(n+1)}{2}-1}}{g_j(Q)\ dS(Q)} \ = \ \frac{\underset{Q\in \mathrm{GOE}(n)}{\mathds{E}}[g_j(Q)] \ (2\pi)^{\frac{n(n+1)}{4}}}{\int_0^{+\infty}{\rho^{\frac{n(n+1)}{2}-1+j}e^{-\frac{\rho^2}{2}}d\rho}}.
\end{align}
By linearity of expectation and the expansion $\mathrm{det}(I_n-\lambda Q)=\sum_{j=0}^n (-1)^j\lambda^j g_j(Q)$, to compute the expectation of $g_j(Q)$ it is enough to compute that of $\mathrm{det}(I_n-\lambda Q)$ for $Q \in \mathrm{GOE}(n)$ and look at the homogeneous part of degree $j$ in $\lambda$. This procedure gives us the explicit expression for \eqref{g_jbeforesub} 
\begin{align}\label{explicitthirdpiece}
    \int_{S^{\frac{n(n+1)}{2}-1}}{g_j(Q)\ dS(Q)} = \frac{(-1)^{\frac{j}{2}}(2\pi)^{\frac{n(n+1)}{4}}\frac{j!}{(\frac{j}{2})!}\binom{n}{j}}{2^j\int_0^{+\infty}{\rho^{\frac{n(n+1)}{2}-1+j}e^{-\frac{\rho^2}{2}}d\rho}} \quad \text{if $0\le j\le n$, $j$ even}
\end{align}
and $0$ otherwise, see Appendix \ref{expectationappendix} for a proof of this result. By standard computations involving Gamma and Beta functions, one can show that the following identities hold
\begin{align}
    &\label{rhointegral} \int_0^{+\infty}\rho^{\frac{n(n+1)}{2}+j-1}e^{-\frac{\rho^2}{2}}\,d\rho \ = \ 2^{\frac{n(n+1)}{4}+\frac{j}{2}-1}\,\Gamma\biggl(\frac{1}{4}(n^2+n+2j)\biggr), \\ \label{diskintegral}
    &\int_{D^{N-n-\frac{n(n+1)}{2}}}\bigl(1-\abs{z}^2\bigr)^{\frac{n(n+1)}{4}-1+\frac{j}{2}}\ dz \ = \ \pi^{\frac{2N-n^2-3n}{4}}\,\frac{\Gamma\bigl(\frac{1}{4}(n^2+n+2j)\bigr)}{\Gamma\bigl(\frac{1}{2}(N-n+j)\bigr)},
\end{align}
where we notice that for $d=2$ \eqref{diskintegral} gives $1$, agreeing with our convention. Substituting \eqref{explicitthirdpiece}, \eqref{rhointegral} and \eqref{diskintegral} into \eqref{simp3integral} and using the duplication formula for the gamma function, we obtain the explicit expression of $\mathrm{Vol}\bigl(\mathcal{U}({\SV}_{n,d},\varepsilon)\bigr)$. Finally, recalling that $\SW_{n,d}=\SV_{n,d}\cup -\SV_{n,d}$ and using formula \eqref{explicitVeronesevolume} to express $\mathrm{Vol}(\SV_{n,d})$, the proof of Theorem \ref{finalvolumetheorem} is complete.

\appendix

\section{}\label{tubularneighbourhoodproof}
\subsection*{Proof of the Tubular Neighbourhood Theorem}
We will use the same notation of Section \ref{tubularsection}. Throughout the proof, we will identify $M$ with the zero section in $NM$. We start by computing the differential  $d_{(x,0)}(\textrm{exp}|_{NM}):T_{(x,0)}(NM)\longrightarrow T_x\overline{M}$ of $\mathrm{exp}|_{NM}$ at $(x,0) \in NM$ for any $x \in M$. Notice that $\textrm{dim}(T_{(x,0)}(NM))=\textrm{dim}(T_x\overline{M})$, hence surjectivity is enough to have a linear isomorphism. Denote by $\gamma_{(p,v)}$ the unique geodesic on $\overline{M}$ such that $\gamma_{(p,v)}(0)=p$ and $\dot{\gamma}_{(p,v)}(0)=v$. Let $y \in T_x\overline{M}$. Since $T_x\overline{M}=T_xM \oplus N_xM$, we can decompose $y$ as $y=y_1+y_2$ with $y_1 \in T_xM$ and $y_2 \in N_xM$. Then there exists $\sigma_1:(-\delta,\delta)\longrightarrow M$ such that $\sigma_1(0)=x$ and $\dot{\sigma}_1(0)=y_1$. Define a curve $\sigma:(-\delta,\delta)\longrightarrow NM$ by $\sigma(t)=(\sigma_1(t),0)\in NM$. We have $\sigma(0)=(x,0)$ and $\dot{\sigma}(0)=(y_1,0)\in T_{(x,0)}(NM)$ and it follows that
\begin{align*}
    d_{(x,0)}(\textrm{exp}|_{NM})(y_1,0)&=\frac{d}{dt}\textrm{exp}\bigl(\sigma(t)\bigr)\bigg|_{t=0}=\ y_1,
\end{align*}
proving that $T_xM$ is contained in the image of $d_{(x,0)}(\textrm{exp}|_{NM})$. Now take $y_2 \in N_xM$ and define a curve $\alpha:(-\delta,\delta)\longrightarrow NM$ by $\alpha(t)=(x,ty_2)$. Then $\alpha(0)=(x,0)$ and $\dot{\alpha}(0)=(0,y_2)$ and it follows that
\begin{align*}
    d_{(x,0)}(\textrm{exp}|_{NM})(0,y_2))&=\frac{d}{dt} \textrm{exp}\bigl(\alpha(t)\bigr)\bigg|_{t=0}=\ y_2,
\end{align*}
proving that also $N_xM$ is contained in the image of $d_{(x,0)}(\textrm{exp}|_{NM})$. By linearity of the differential, we obtain surjectivity and therefore $d_{(x,0)}(\textrm{exp}|_{NM})$ is an isomorphism.\\  As a consequence for every $x \in M$ there exists an open neighbourhood $W_x$ of $(x,0)$ in $NM$ such that the rank of the differential $d_{(q,v)}(\textrm{exp}|_{NM})$ is maximal for every $(q,v) \in W_x$. Up to shrinking the neighbourhood, we can assume that $W_x=\bigl(U_x \times B(0,\varepsilon_x)\bigr)\cap NM$ where $U_x$ is an open neighbourhood of $x \in M$, $B(0,\varepsilon_x)$ denotes the ball of radius $\varepsilon_x$ centered at the origin in $T_x\overline{M}$ and $\textrm{exp}|_{W_x}$ is an embedding. By compactness we have a finite covering of $M$ $\{U_{x_1},\dots,U_{x_r}\}$ for some $x_1,\dots,x_r \in M$. Choosing $\varepsilon:=\min\{\varepsilon_{x_1},\dots,\varepsilon_{x_r}\}$ we get that 
\begin{align*}
    \textrm{exp}|_{N^{\varepsilon}M}:N^{\varepsilon}M \longrightarrow \overline{M}
\end{align*}
is an immersion and a local embedding. Notice that for every $\tilde\varepsilon \leq \varepsilon$ also $\textrm{exp}|_{N^{\tilde\varepsilon}M}$ is an immersion and a local embedding. We claim that there exists an $\tilde\varepsilon < \varepsilon$ such that this restriction is also globally injective. If this is the case, then the restriction to the closure of the $\frac{\tilde\varepsilon}{2}$--small normal bundle is an embedding, since injective immersions with compact domain are embeddings. It follows that any number less than $\frac{\tilde\varepsilon}{2}$ satisfies the statement of the theorem. \\
To prove the claim we argue by contradiction: suppose that for every $n \in \N$ there exist $(x_n,v_n)$, $(y_n,w_n) \in N^{\frac{1}{n}}M$ such that $\textrm{exp}(x_n,v_n)=\textrm{exp}(y_n,w_n)$. Since $M$ is compact, up to restricting to subsequences we can assume that $x_n$ converges to $\overline x \in M$ and $y_n$ converges to $\overline y \in M$, while $v_n$ and $w_n$ both converge to $0$ since $v_n,w_n \in B(0,\frac{1}{n})$ for every $n \in N$. By compactness of $M$ there exists $\delta >0$ such that for every $p \in M$ the map $\textrm{exp}_p:B(0,\delta) \subset T_p\overline{M}\longrightarrow \overline{M}$ is a diffeomorphism on its image, where $\textrm{exp}_p(z)=\textrm{exp}(p,z)$. It follows that since $v_n \longrightarrow 0$, for $n$ large enough $\gamma_{(x_n,v_n)}$ will be the unique geodesic joining $x_n$ with $\textrm{exp}_{x_n}(v_n)=\textrm{exp}(x_n,v_n)=\gamma_{(x_n,v_n)}(1)$ and $d_g(x_n,\textrm{exp}(x_n,v_n))=\norm{v_n}$. Analogously, for $n$ large enough we will also have $d_g(y_n,\textrm{exp}(y_n,w_n))=\norm{w_n}$, where we stress that the uniformity of $\delta$ is crucial. Since by hypothesis $\textrm{exp}(x_n,v_n)=\textrm{exp}(y_n,w_n)$, we have that 
\begin{align*}
    d_g(x_n,y_n)\leq d_g(x_n,\mathrm{exp}(x_n,v_n))+d_g(y_n,\textrm{exp}(y_n,w_n))=\norm{v_n}+\norm{w_n}\longrightarrow 0,
\end{align*}
and this forces $\overline x=\overline y$. Then for $n$ sufficiently large, we have that $(x_n,v_n)$, $(y_n,w_n) \in W_p$ for some $p \in M$, but on every $W_p$ we have a local embedding, leading to a contradiction. The proof is concluded.
\medskip 

\section{}\label{integrationappendix}
\subsection*{Proof of lemma \eqref{integrationlemma}}
We will use the same notations as in section \ref{sectionlemma}. We want to prove that 
\begin{align*}
    \varphi^*(\mathrm{vol}_{S^m}) = \bigl(1-\abs{z}^2\bigr)^{\frac{k-1}{2}}\ \mathrm{vol}_{S^k}\wedge \mathrm{vol}_{\overset{\circ}{D}{} ^{m-k}},
\end{align*}
where $\varphi$ is given by \eqref{parametrizationlemma}. By the usual formula for the pullback of a differential form through a diffeomorphism, we have 
\begin{align*}
    \varphi^*(\mathrm{vol}_{S^m})=\abs{\det(J\varphi^t \cdot J\varphi))}^{\frac{1}{2}}\ \mathrm{vol}_{\overset{\circ}{D}{} ^{k}}\wedge \mathrm{vol}_{\overset{\circ}{D}{} ^{m-k}},
\end{align*}
where $J\varphi$ denotes the $(m+1)\times m$ Jacobian matrix of $\varphi$ and $J\varphi^t$ is its transpose. Denote by $J\iota$ the Jacobian matrix of the inclusion $\iota:S^k \hookrightarrow \R^{k+1}$. Then $J\varphi$ is the following block matrix
\begin{align*}
    J\varphi(\sigma,z) = \left(\begin{array}{c|c}
     \sqrt{1-\abs{z}^2}J\iota(\sigma) & \biggl(\frac{-z_j}{\sqrt{1-\abs{z}^2}}\iota(\sigma)\biggr) \\ \hline
      0 & I_{m-k} 
\end{array}\right),
\end{align*}
where $I_{m-k}$ denotes the $(m-k)\times (m-k)$ identity matrix. Since $J\iota^t(\sigma) \cdot \iota(\sigma) =  \iota(\sigma) \cdot J\iota(\sigma)  =  0$ and $\iota(\sigma)^t \cdot \iota(\sigma)  =  1$, we obtain
\begin{align*}
    (J\varphi^t \cdot J\varphi)(\sigma,z) = 
    \left( \begin{array}{c|c}
        (1-\abs{z}^2)(J\iota^t \cdot J\iota)(\sigma) & 0 \\ \hline
         0 & \bigl(I_{m-k} + \frac{z \cdot z^t}{1-\abs{z}^2}\bigr) 
    \end{array}\right).
\end{align*}
For every $z \in \R^{m-k}$ consider $R \in O(m-k)$ such that $z=Re_1\abs{z}$, where $e_1=(1,0,\dots,0)$. Then we can compute the determinant of the lower right block as
\begin{align*}
    \det\biggl(I_{m-k}+\frac{z \cdot z^t}{1-\abs{z}^2}\biggr)  =  \det R\biggl(I_{m-k}+\frac{\abs{z}^2}{1-\abs{z}^2}E_{11}\biggr)R^t \ = \ \frac{1}{1-\abs{z}^2},
\end{align*}
where $E_{11}=e_1e_1^t$ has all zero entries except for the $(1,1)$--th one which is $1$. Recalling that the determinant of a diagonal block matrix is given by the product of the determinants of its blocks, we find the following expression
\begin{align*}
    \abs{\det(J\varphi^t \cdot J\varphi)} \ = \ \bigl(1-\abs{z}^2\bigr)^k\ \abs{\det(J\iota^t \cdot J\iota)}\ \frac{1}{1-\abs{z}^2} = \bigl(1-\abs{z}^2\bigr)^{k-1} \ \abs{\det(J\iota^t \cdot J\iota)}.
\end{align*}
Finally, applying again the formula for the pullback of a differential form, we can conclude that 
\begin{align*}
    \varphi^*(\mathrm{vol}_{S^m})=\bigl(1-\abs{z}^2\bigr)^{\frac{k-1}{2}} \ \abs{\det(J\iota^t \cdot J\iota)}^{\frac{1}{2}}\ \mathrm{vol}_{\overset{\circ}{D}{} ^{k}}\wedge \mathrm{vol}_{\overset{\circ}{D}{} ^{m-k}} =  \bigl(1-\abs{z}^2\bigr)^{\frac{k-1}{2}}\ \mathrm{vol}_{S^k} \wedge \mathrm{vol}_{\overset{\circ}{D}{} ^{m-k}}. 
\end{align*}
\medskip

\section{}
\subsection*{Proof of formula \eqref{explicitthirdpiece}}\label{expectationappendix}
We will use the same notations as Section \ref{volumechapter}. By linearity of the expectation, in order to prove formula \eqref{explicitthirdpiece} all we have to do is computing
\begin{align*}
    \underset{Q\in \mathrm{GOE}(n)}{\mathds{E}}[\det(I_n-\lambda Q)]=\sum_{j=0}^n(-1)^j\lambda^j\underset{Q\in \mathrm{GOE}(n)}{\mathds{E}}[g_j(Q)],
\end{align*}
since the expectation of $g_j(Q)$ can then be deduced by looking at the degree $j$ coefficient in above polynomial expression in $\lambda$. 

This can be computed immediately using \cite[Eq. (2.2.38)]{mehta}, but we also give a simple derivation of this computation for the sake of completeness.

First, we write the determinant according to its very definition
\begin{align*}
     \det(I_n-\lambda Q)=\sum_{\sigma \in S_n}{\mathrm{sgn}(\sigma)\prod_{i=1}^n{\bigl(\delta_{i\sigma(i)}-\lambda Q_{i\sigma(i)}\bigr)}},
 \end{align*}
 where $S_n$ is the group of permutations on $\{1,\dots,n\}$ and $\mathrm{sgn}(\sigma)$ is the signature of the permutation $\sigma \in S_n$. Recall that for $Q \in \mathrm{GOE}(n)$ we have $Q_{ii}\sim N(0,1)$ and $Q_{ij}\sim N(0,\frac{1}{2})$ for $i\neq j$ and, apart from the obvious symmetry conditions, the entries are independent.

 By linearity
 \begin{align}\label{expectationcomputation}
    \underset{Q\in \mathrm{GOE}(n)}{\mathds{E}}[\det(I_n-\lambda Q)] \ = \ \sum_{\sigma \in S_n}{\mathrm{sgn}(\sigma)\underset{Q\in \mathrm{GOE}(n)}{\mathds{E}}\biggl[\ \prod_{i=1}^n{\bigl(\delta_{i\sigma(i)}-\lambda Q_{i\sigma(i)}\bigr)}\biggr]}.
\end{align}
Given $\sigma \in S_n$, suppose that it contains a cycle of length at least $3$, i.e. there exists $i\in\{1,\dots,n\}$ such that $\sigma(i) \neq i$ and $\sigma^2(i)\neq i$. Then, by independence of the entries, in the term of \eqref{expectationcomputation} corresponding to $\sigma$, we can split the expectation into a product of expectations, separating the term corresponding to such $i$. Since $\delta_{i\sigma(i)}=0$ and $Q_{i\sigma(i)}$ is centered, this expectation is $0$ and $\sigma$ gives no contribution to \eqref{expectationcomputation}. It follows that the only permutations contributing to \eqref{expectationcomputation} are those formed by transpositions and fixed points only. \\
For such a $\sigma \in S_n$, denote by $\mathrm{fix}(\sigma)=\{i \in \{1,\dots,n\} \ | \ \sigma(i)=i\}$ the set of fixed points of $\sigma$ and by $s(\sigma)$ the number of disjoint transpositions in $\sigma$. Then we have 
\begin{align*}
    \underset{Q \in \mathrm{GOE}(n)}{\mathds{E}}\biggl[\prod_{i=1}^n{\bigl(\delta_{i\sigma(i)}-\lambda Q_{i\sigma(i)}\bigr)}\biggr] \ & = \ \mathds{E}\biggl[\prod_{i\in \mathrm{fix}(\sigma)}{\bigl(1-\lambda\xi_i\bigr)}\biggr]\cdot \mathds{E}\biggl[\prod_{k=1}^{s(\sigma)}{\frac{1}{2}\lambda^2\gamma_k^2}\biggr] \ = \\ & = \ \biggl(\,\prod_{i\in \mathrm{fix}(\sigma)}{\mathds{E}\bigl[\bigl(1-\lambda \xi_i\bigr)\bigr]}\biggr) \cdot \biggl(\prod_{k=1}^{s(\sigma)}{\mathds{E}\biggl[\frac{1}{2}\lambda^2 \gamma_k^2\biggr]}\biggr),
\end{align*}
where $\xi_i \sim N(0,1)$ and $\gamma_k \sim N(0,1)$. For these terms we have 
\begin{align*}
    &\mathds{E}\bigl[\bigl(1-\lambda\xi_i\bigr)\bigr] = 1, \\
    &\mathds{E}\bigl[\frac{1}{2}\lambda^2 \gamma_k^2\bigr]  = \frac{1}{2}\lambda^2 \mathds{E}\bigl[\gamma_k^2\bigr]  = \frac{1}{2}\lambda^2.
\end{align*}
The contribution of such $\sigma \in S_n$ in \eqref{expectationcomputation} is
\begin{align}\label{sigmacontribution}
    \mathrm{sgn(\sigma)}\underset{Q \in \mathrm{GOE(n)}}{\mathds{E}}\biggl[\prod_{i=1}^n{\bigl(\delta_{i\sigma(i)}-\lambda Q_{i\sigma(i)}\bigr)}\biggr] \ = \ \mathrm{sgn}(\sigma) \biggl(\frac{1}{2}\lambda^2\biggr)^{s(\sigma)},
\end{align}
and notice it depends only on $s(\sigma)$. To conclude the computation we, therefore, have to count how many permutations in $S_n$ are given by exactly $k$ disjoint transpositions for every $k=0,\dots,\lfloor \frac{n}{2}\rfloor$. Denote this number by $N(k)$. To construct a permutation with exactly $k$ disjoint transpositions we proceed as follows: choose two elements in $\{1,\dots,n\}$ forming the first transposition, then choose another $2$ among the remaining ones to form the second transposition and so on until the $k$--th one is formed. Moreover, since the supports of the transpositions are disjoint, the order in which they are picked is not relevant. It follows that 
\begin{align*}
    N(k)=\frac{\binom{n}{2}\binom{n-2}{2}\dots\binom{n-2k+2}{2}}{k!}=\frac{n!}{2^k(n-2k)!\ k!}
\end{align*}
and using this and \eqref{sigmacontribution} in \eqref{expectationcomputation} gives
\begin{align}\label{expectationfinal}
     \underset{Q\in \mathrm{GOE}(n)}{\mathds{E}}[\det(I_n-\tau Q)] \ = \ \sum_{k=0}^{\lfloor\frac{n}{2}\rfloor}{(-1)^kN(k)\biggl(\frac{1}{2}\lambda^2\biggr)^k} \ = \ \sum_{k=0}^{\lfloor\frac{n}{2}\rfloor}{(-1)^k\lambda^{2k}\frac{(2k)!}{2^{2k}k!}\binom{n}{2k}}.
\end{align}
Since the expectation of $g_j(Q)$ is given by the degree $j$ term in \eqref{expectationfinal} multiplied by $(-1)^j$, we obtain
\begin{align}\label{gjexpectation}
   \underset{Q \in \mathrm{GOE}(n)}{\mathds{E}}\bigl[g_j(Q)\bigr]= \begin{cases}
    0 & \text{if} \ j \ \text{odd} \\ \\
    \frac{(-1)^{\frac{j}{2}}}{2^j}\frac{j!}{(\frac{j}{2})!}\binom{n}{j} & \text{if} \ 0\leq j \leq n, \ j \ \text{even}
    \end{cases}.
\end{align}
Plugging \eqref{gjexpectation} into \eqref{g_jbeforesub}, we finally get \eqref{explicitthirdpiece}.

\bibliographystyle{alpha}
\bibliography{bibliototale}

\end{document}